\documentclass[11pt,letterpaper]{amsart}
\usepackage[usenames,dvipsnames,svgnames,table]{xcolor}
\usepackage[pagebackref,linktocpage=true,colorlinks=true,linkcolor=Blue,citecolor=BrickRed,urlcolor=RoyalBlue]{hyperref}
\usepackage[alphabetic,msc-links,abbrev]{amsrefs} 


\usepackage{stmaryrd}
\usepackage{mathrsfs}
\usepackage{amsmath}
\usepackage{amssymb}
\usepackage{accents}
\usepackage{amsthm}
\usepackage{mathtools}
\usepackage{graphicx}
\usepackage{enumitem}
\usepackage{bbm}
\usepackage[colorinlistoftodos,prependcaption,textsize=tiny,
textwidth=0.85in]{todonotes}


\usepackage{esint}
\usepackage{stackengine}

\setlength{\oddsidemargin}{0.9cm} 
\setlength{\evensidemargin}{1.2cm}
\setlength{\textwidth}{15.6cm}
\setlength{\topmargin}{-1cm}
\setlength{\textheight}{23cm}

\newtheorem{theorem}{Theorem}[section]
\newtheorem{lemma}[theorem]{Lemma}
\newtheorem{proposition}[theorem]{Proposition}
\newtheorem{definition}[theorem]{Definition}
\newtheorem{corollary}[theorem]{Corollary}

\theoremstyle{remark}
\newtheorem{remark}[theorem]{Remark}

\theoremstyle{definition}


\numberwithin{equation}{section}

\def\R{\mathbb{R}}
\def\N{\mathbb{N}}
\def\Sph{\mathbb{S}}
\def\d{\partial}

\def\supp{\textnormal{\textrm{supp}}}
\def\sgn{\textnormal{\textrm{sgn}}}

\def\ep{\varepsilon}

\def\mB{\mathcal{B}}
\def\mC{\mathcal{C}}

\def\mH{\mathcal{H}}

\def\mP{\mathcal{P}}
\def\mR{\mathcal{R}}

\renewcommand{\div}{\textnormal{\textrm{div}}}

\usepackage{scalerel}

\newcommand\ffhat[1]{\arraycolsep=0pt\relax%
	\begin{array}{c}
		\stretchto{
			\scaleto{
				\scalerel*[\widthof{\ensuremath{#1}}]{\kern-.5pt\bigwedge\kern-.5pt}
				{\rule[-\textheight/2]{1ex}{\textheight}} 
			}{\textheight} %
		}{0.5ex}\\           
		#1\\                 
		\rule{-1ex}{0ex}
	\end{array}
}

\usepackage{scalerel,stackengine}
\stackMath
\newcommand\fhat[1]{%
	\savestack{\tmpbox}{\stretchto{%
			\scaleto{%
				\scalerel*[\widthof{\ensuremath{#1}}]{\kern.1pt\mathchar"0362\kern.1pt}%
				{\rule{0ex}{\textheight}}
			}{\textheight}%
		}{2.4ex}}%
	\stackon[-6.9pt]{#1}{\tmpbox}%
}
\parskip 1ex
\usepackage[alphabetic]{amsrefs}
\usepackage{tikz}
\usepackage{tikz-3dplot}

\begin{document}
	
	\title{
		A Monotonicity formula for almost self-similar suitable weak solutions 
		to the stationary Navier-Stokes equations in $\mathbb R^5$%
	}
	\author{
		Yucong {\sc Huang} and Aram  {\sc Karakhanyan} 
	}
	\email{aram6k@gmail.com}
	\date{}
	\maketitle
\begin{abstract}
In this paper we show that a suitable weak solution to the 
stationary Navier-Stokes system in $\R^5$, cannot behave like a
self-similar function of degree negative one  
if the lower limit of the local Reynolds number is finite.

To prove the result we develop a method  that uses a 
monotonicity formula approach, classification of homogenous solutions to the 
incompressible Euler equations in $\R^5$, and a projection theorem. 
\end{abstract}	

\section{Introduction}

In this paper we study the local behavior of the weak solutions of the stationary incompressible 
Navier-Stokes equations in five space dimensions
\begin{equation}\label{eq:problem}
\left.\begin{array}{rrr}
u^ju^i_j+P_i=\Delta u^i, \quad i=1, 2, 3, 4, 5,\\
\div u =0,
\end{array}
\right\}
\quad\mbox{in}\ \Omega\subset \R^5
\end{equation}
where $\Omega\subset \R^5$ is a domain . 

The existence of weak solutions under various assumptions on the 
boundary data and $\Omega$ has been established in \cite{Galdi}, \cite{FR-Pisa}, \cite{Struwe-per}.  
Moreover, in  \cite{FR-Pisa}, \cite{FR-arma}, \cite{Struwe-per} the authors constructed 
smooth solutions of \eqref{eq:problem-3d}.

The problem \eqref{eq:problem} has a number of similarities with the 
dynamic Navier-Stokes system in three space dimensions
\begin{equation}\label{eq:problem-3d}
\left.\begin{array}{rrr}
u_t^i+u^ju^i_j+P_i=\Delta u^i, \quad i=1, 2, 3,\\
\div u =0,
\end{array}
\right\}
\mbox{in}\ \Omega\times(0, T), \Omega\subset \R^3.
\end{equation}
For instance, in both cases $u\in L^{\frac{10}3}_{loc}, P\in L^{\frac 53}_{loc}, \nabla P\in L^{\frac54}_{loc}$, see  \cite{Seregin-book}. 
Due to this a number of mathematicians studied the stationary 
Navier-Stokes equations in higher dimensions in order to develop 
stronger analytical methods which may be applicable to the 
dynamic case \eqref{eq:problem-3d}, see \cite{Galdi}.

In this context, of particular interest is the problem of estimating the 
dimension of the singular points of suitable weak solutions $u$, i.e. the points where 
$u$ is not bounded. Scheffer \cite{Scheffer}  
proved such results  for \eqref{eq:problem-3d} and later
Caffarelli, Kohn and Nirenberg \cite{CKN} improved upon it 
showing that  the Hausdorff dimension of the singular set in space-time is atmost one. 
We note that the latter result can be established by a different method
by looking at the   
small perturbations of the Stokes system \cite{Lin}. 
For \eqref{eq:problem} the partial regularity is proved in \cite{Struwe}.

At the possible singular point $(x_0, t_0)$ the scale invariance  $u(x, t)\mapsto ru(x_0+rx, t_0+r^2), r>0$ suggests that at the scale $r$, $u$ behaves like $1/r$ near $(x_0, t_0)$.
A natural question that follows from this observation is whether 
one can classify the scale invariant solutions.
This has been the main approach towards understanding the structure of possible singularities.
\v{S}ver\'{a}k's classification for the self-similar solutions \cite{Sverak} for Navier-Stokes equations \eqref{eq:problem} shows that 
a solution of the form $h(x)=\frac{\zeta(\frac x{|x|})}{|x|}$, with some smooth vectorfield $\zeta$, must be identically zero. 

Another questions following from this result is whether the solutions sufficiently close to the self-similar one 
are in fact zero. It is easy to see that the self-similar vectorfields $h=\frac{\zeta(\frac x{|x|})}{|x|}$ form a Hilbert subspace $\mH(R)$ of 
the Sobolev space $\mathcal W^{1, 2}(B_R)$ in $\R^5$ with an appropriately scaled invariant norms,
\begin{equation}\label{mH1}
	\mH(R):= \left\{ h\in W^{1,2}(B_R) \,\bigg\vert\, \exists \zeta \in W^{1,2}(\mathbb{S}^{N-1}): \ h(x)=\frac{1}{|x|}\zeta\big( \frac{x}{|x|} \big) \ \text{for $x\in B_R$ a.e.} \right\}.
\end{equation}
Thus the Hilbert projection theorem yields that $\mathcal W^{1,2}(B_R) = \mH(R) \oplus \mH(R)^{\perp}$ and we can define the the corresponding projection operator as $\mathcal{P}_R[\,\cdot\,]: \mathcal W^{1,2}(B_R)\to \mH(R)$. Using this, we can measure the error $u-\mathcal{P}_R[u]$ in terms of the $W^{1,2}$ norm of $u$. 

Our work is motivated by the following question. 
 
{\textit{  If a suitable weak solution to the stationary Navier-Stokes system develops a singularity, 
 can it asymptotically  become self-similar?}}
 
Our  main result in this direction can be stated as follows: 
\begin{theorem}\label{TH-00}
Let $u$ be a suitable weak solutions. Suppose that 
the following two conditions hold:
\begin{equation}
\liminf_{r\to 0}M(r):=\liminf_{r\to 0}\int_{B_r}\!\! \left( \frac{|u|^2}{r^3} + \frac{|\nabla u|^2}{r} \right) <\infty, \quad \liminf_{r\to 0}\frac1{r^2}\int_{B_r}\!\!(|u|^2+2P)u\cdot\frac x{|x|}>0.
\end{equation}
Then  for any $\{r_k\}_{k=1}^\infty, r_k\downarrow 0$ there is a subsequence $r_{k_m}$ such that 
the scaled solutions $u_{r_{k_m}}(x)=r_{k_m}u(r_{k_m}x)$, converge to 
a homogenous vector field of degree negative one, and hence 
$x=0$ is a regular point.
\end{theorem}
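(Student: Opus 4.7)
The plan is to follow the three-stage strategy foreshadowed in the abstract: scale-invariant compactness, a monotonicity formula forcing the blow-ups to lie in $\mH$, and classification of the $(-1)$-homogeneous limit. To begin, introduce the scaled pair $u_r(x) := r\,u(rx)$, $P_r(x) := r^2\,P(rx)$, which preserve \eqref{eq:problem} and the suitable-weak-solution class. A change of variables identifies $M(r)=\|u_r\|_{W^{1,2}(B_1)}^{2}$, so the first hypothesis furnishes a sequence $r_k\downarrow 0$ along which $u_{r_k}$ is uniformly bounded in $W^{1,2}(B_1)$; applying the same scale-invariance with $r_k/R$ for arbitrary $R>0$ and diagonalising, one extracts a further subsequence $r_{k_m}$ and a limit $u_\infty$ such that $u_{r_{k_m}}\rightharpoonup u_\infty$ weakly in $W^{1,2}_{loc}(\R^5)$ and strongly in $L^q_{loc}$ for every $q<10/3$. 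The standard pressure estimate of \cite{Seregin-book} upgrades this to $P_{r_{k_m}}\rightharpoonup P_\infty$ in $L^{5/3}_{loc}$, and the strong $L^q$ convergence of $u_{r_{k_m}}$ handles the quadratic nonlinearity, so $(u_\infty,P_\infty)$ solves \eqref{eq:problem} weakly on $\R^5\setminus\{0\}$.

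The core of the proof will be a monotonicity identity of the form
\begin{equation*}
	\frac{d}{dr}M(r)\;\geq\;\frac{c}{r}\bigl\|u_r-\mP_1[u_r]\bigr\|_{W^{1,2}(B_1)}^{2}\;-\;\frac{1}{r}\,\mathcal{F}(u_r,P_r),
\end{equation*}
where $\mathcal{F}$ is, up to a harmless constant, precisely the flux functional $r^{-2}\int_{B_r}(|u|^2+2P)\,u\cdot\frac{x}{|x|}$ appearing in the second hypothesis. Its derivation starts from the observation that $r\,\partial_r u_r=(1+x\cdot\nabla)u_r$ vanishes exactly on $\mH$, so the Hilbert projection $\mP_1$ splits this radial derivative orthogonally and its coercive summand produces the $\mH(1)^{\perp}$-norm on the right. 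The pressure and convective contributions are then extracted by combining the local energy identity (testing \eqref{eq:problem} against $u$) with the Pokhozhaev--Rellich identity (testing against $x\cdot\nabla u$); after using $\div u=0$ to cancel the interior pressure terms, the surviving boundary pieces reassemble into $\mathcal{F}$. Integrating over $[r_{k_m},1]$ and invoking both hypotheses yields $\int_0^{1} r^{-1}\|u_r-\mP_1[u_r]\|_{W^{1,2}(B_1)}^{2}\,dr<\infty$, whence along a further subsequence $u_{r_{k_m}}-\mP_1[u_{r_{k_m}}]\to 0$ in $W^{1,2}(B_1)$; combined with the first step this forces $u_\infty\in\mH$, i.e.\ $u_\infty$ is $(-1)$-homogeneous.

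To close, observe that for a $(-1)$-homogeneous $u_\infty$ both $\Delta u_\infty$ and $u_\infty\cdot\nabla u_\infty$ are $(-3)$-homogeneous, so the viscous term can be absorbed into an effective pressure and \eqref{eq:problem} reduces to the stationary incompressible Euler system on $\Sph^{4}$. The classification of $(-1)$-homogeneous Euler fields announced in the abstract then restricts $u_\infty$ to a finite-parameter family, and passing the second hypothesis to the limit singles out a unique profile of the prescribed non-zero flux. Because the scaled solutions converge to this smooth-on-$\R^5\setminus\{0\}$ profile with controlled scale-invariant energy $M(r_{k_m})$, a scale-invariant $\varepsilon$-regularity theorem in the spirit of \cite{Struwe} applies at the scales $r_{k_m}$ and yields that $0$ is a regular point of $u$.

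The principal obstacle is the monotonicity step: the identity must simultaneously produce the coercive $\mH(1)^{\perp}$-contribution with the correct sign \emph{and} reorganise every pressure and convective boundary contribution into exactly the functional $\mathcal{F}$ of the second hypothesis. This in turn requires $\mP_1$ to preserve the divergence-free constraint (so that $\mH(1)$ consists only of solenoidal fields) and a careful accounting of the boundary integrals arising from the combined energy and Pokhozhaev--Rellich identities, further complicated by the borderline integrability of $|\nabla u|^2$ at the origin.
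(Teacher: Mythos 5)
Your overall strategy (scale, extract a blow-up limit, use a monotonicity formula to force homogeneity of the limit, then classify) matches the paper's in spirit, but the three load-bearing steps as you state them have genuine gaps. First, the central differential inequality is asserted rather than derived, and in the form you wrote it the signs defeat you: with the flux entering as $-\frac{1}{r}\mathcal F(u_r,P_r)$ and the hypothesis $\liminf_{r\to0}\mathcal F>0$, integration over $[r_{k_m},1]$ gives an upper bound for $c\int_{r_{k_m}}^1 s^{-1}\|u_s-\mP_1[u_s]\|^2\,ds$ by $M(1)-M(r_{k_m})+\int_{r_{k_m}}^1 s^{-1}\mathcal F\,ds$, and the last integral diverges like $c_0|\log r_{k_m}|$, so the claimed bound $\int_0^1 r^{-1}\|u_r-\mP_1[u_r]\|^2\,dr<\infty$ does not follow. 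In the paper the flux appears with a \emph{plus} sign in the lower bound for $A'(r)$ (Proposition \ref{prop:dAdr}), so the positivity hypothesis is exactly what makes the quantity $A(r)$ nondecreasing; moreover the monotone quantity is not $M(r)$ but the specially built $A(r)$, the coercive term is the local quantity $\frac{3}{4r^3}\int_{B_r}|\nabla(|x|u)|^2$ (whose vanishing directly gives degree $-1$ homogeneity), not the Hilbert projection distance, and the formula is extracted from the local energy inequality with radial cutoffs alone. Your proposed derivation via a Pokhozhaev--Rellich identity, i.e.\ testing against $x\cdot\nabla u$, is not admissible for a suitable weak solution with only $u\in\mathcal W^{1,2}$, $u\in L^{10/3}_{loc}$ -- avoiding exactly this is why the paper works from the energy inequality. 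Note also that monotonicity plus boundedness of $A$ gives $A(\beta r)-A(\alpha r)\to0$ for \emph{every} $r\to0$, which is what yields the ``for any sequence there is a subsequence'' conclusion; a finite weighted integral would only give it along some sequence.

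Second, the endgame is wrong. The blow-up limit solves the stationary Navier--Stokes equations, not Euler: under $u_r(x)=ru(rx)$ the viscosity is unchanged, and since $M(r_{k})$ stays bounded no inviscid limit occurs (the Euler classification, Theorem \ref{thm:Heuler}, is needed in the paper only in the separate regime where $M_k\to\infty$, inside Proposition \ref{prop:cubicEst}). For a $(-1)$-homogeneous field $\Delta u_\infty$ is not a gradient, so it cannot be ``absorbed into an effective pressure''. The correct tool is \v{S}ver\'ak's classification \cite{Sverak} of $(-1)$-homogeneous Navier--Stokes solutions, which forces $u_\infty\equiv0$; there is no ``finite-parameter family'' selected by a nonzero flux -- indeed, for homogeneous profiles the Bernoulli pressure vanishes, so their flux is zero, and a limit with prescribed nonzero flux cannot exist. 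The paper then closes by contradiction: if $0$ were singular, the $\varepsilon$-regularity lemmas (Lemmas \ref{lem:Lin}, \ref{lem:Linbbb}) force $M(r_k)\ge\delta>0$, which is incompatible, via strong $L^2$ convergence and the local energy inequality, with the blow-up limit being zero. Your final appeal to an $\varepsilon$-regularity theorem ``at the scales $r_{k_m}$'' presupposes smallness that you have not established, since you posit a nonzero limiting profile.
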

The proof of Theorem \ref{TH-00} uses the monotonicity 
formula introduced in Proposition \ref{prop:dAdr}, and 
a scaling argument. See Lemma \ref{lem:positiveinf} for the proof.
Note that there are no smallness assumptions in the 
statement of Theorem \ref{TH-00}.

If $\liminf_{r\to 0}M(r)<\infty$
then the singularity may occur only if the function 
\begin{equation}
\text{\Large$\wp$}(r)  =\frac{1}{r^2} \int_{B_r}  \Big\{{|u|^2} + 2P \Big\} (u\cdot \frac{x}{|x|})  \, dx .
\end{equation}
takes nonpositive values as $r\to 0$. Moreover, 
if $u$ is of the form $\frac{\zeta(\frac x{|x|})}{|x|}$ then one can check that 
$\text{\Large$\wp$}(r) =0$. This observation motivates the formulation of 
a condition in our next result that allows to control $\text{\Large$\wp$}(r)$.  
\begin{theorem}\label{TH}
Let $u$ be a suitable weak solution of \eqref{eq:problem}, $B_1\subset \Omega$, and
\[
m:=\liminf_{R\to 0}M(R)<\infty \qquad \text{where} \quad M(R):=\int_{B_R} \Big( \frac{|u|^2}{R^3} + \frac{|\nabla u|^2}{R} \Big).
\] Let $\mP_R[\,\cdot\,]: \mathcal W^{1,2}(B_R)\to \mH(R)$ be the projection operator for the space \eqref{mH1}. There exists $\ep(m)>0$ 
such that 
if  
\begin{equation}\label{eq:13-alt}
\frac{1}{R^3} \int_{B_R} \left| u - \mP_R[u] \right|^2 + \frac{1}{R} \int_{B_R} \left| \nabla u - \nabla \mP_R[u] \right|^2 \le  \ep(m) M(R)
\end{equation}
holds
for all $R\in (0, R_0)$ then $u$ is regular at $x=0$.  
\end{theorem}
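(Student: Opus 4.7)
The plan is to reduce Theorem \ref{TH} to Theorem \ref{TH-00} by verifying the remaining hypothesis $\liminf_{R\to 0}\wp(R)>0$ under the smallness condition \eqref{eq:13-alt}; the other hypothesis $\liminf_{R\to 0} M(R) = m < \infty$ is already assumed. The conceptual difficulty is that $u$ exactly self-similar of degree $-1$ produces $\wp\equiv 0$, so $\wp(R)$ is a genuinely second-order quantity in the error $v_R := u - \mP_R[u]$; the smallness alone does not obviously fix a sign, and the positivity must be exhumed from the interaction of $u$ with the pressure encoded in the monotonicity formula.

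First I would set $h_R := \mP_R[u]\in\mH(R)$ and $v_R := u - h_R$, and decompose the integrand of $\wp(R)$ as
\[
(|u|^2 + 2P)(u\cdot\hat x) = (|h_R|^2 + 2P_{h_R})(h_R\cdot\hat x) + E_R(x),
\]
where $P_{h_R}$ is the self-similar part of the pressure, extracted from the stationary Poisson equation $-\Delta P = \d_i\d_j(u^i u^j)$ by retaining only the contribution of $h_R$. The leading self-similar term vanishes upon integration over $B_R$ and division by $R^2$, as remarked in the introduction above, so the entire value of $\wp(R)$ is carried by $E_R$. To extract a positive lower bound on $E_R$ I would then invoke Proposition \ref{prop:dAdr}: its monotonicity identity relates $\tfrac{d}{dR}A(R)$ to $\wp(R)/R$ and to a nonnegative dissipation term, and integrating from a small $r<R$ and passing to $r\to 0$ along a subsequence realizing $\liminf M = m$ (in the spirit of Lemma \ref{lem:positiveinf}) should yield an integral lower bound of the form $\int_0^{R_0}\wp(R)/R\,dR \ge c\,m$.

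The cross-terms and higher-order terms in $E_R$ would then be estimated using Hardy's inequality in $\R^5$, the Sobolev embedding $W^{1,2}(B_R)\hookrightarrow L^{10/3}(B_R)$, and the Calder\'on--Zygmund-type bound $\|P\|_{L^{5/3}(B_R)} \lesssim \|u\|^2_{L^{10/3}(B_R)}$ (modulo a harmonic correction). Once all norms are recast in scale-invariant form at scale $R$, the smallness assumption \eqref{eq:13-alt} delivers
\[
R^{-2}\int_{B_R} |E_R|\,dx \le C\,\ep(m)^{1/2}\, M(R).
\]
Combined with the integral lower bound from the previous step, this gives $\wp(R) \ge c(m)\,M(R) - C\,\ep(m)^{1/2}\,M(R)$ along a subsequence $R_k\downarrow 0$. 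Choosing $\ep(m)$ small in terms of the constants in Proposition \ref{prop:dAdr}, we obtain $\liminf_{R\to 0}\wp(R)>0$, and Theorem \ref{TH-00} then yields regularity at $x=0$.

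The main obstacle I anticipate is extracting a quantitative positive sign from the second-order error $E_R$: since the first-order self-similar contribution to $\wp$ vanishes identically, the positivity must come from a coupling between $h_R$ and $v_R$ mediated by the pressure, whose precise structure is implicit in Proposition \ref{prop:dAdr} and is likely the technical heart of the argument. A secondary difficulty is choosing the gauge of the pressure so that the splitting $P = P_{h_R} + P_{v_R}$ respects the orthogonal projection $\mP_R$; without a careful normalization (e.g.\ mean zero on each $B_R$, or subtraction of a suitable harmonic remainder), the Calder\'on--Zygmund estimates do not have the scale-invariant form needed to extract the factor $\ep(m)^{1/2}$ in the error bound.
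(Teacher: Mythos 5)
Your reduction to Theorem \ref{TH-00} hinges on proving $\liminf_{R\to 0}\wp(R)>0$, and the key step you propose for this --- an integral lower bound $\int_0^{R_0}\wp(R)\,dR/R\ge c\,m$ extracted from Proposition \ref{prop:dAdr} --- goes in the wrong direction. The monotonicity formula reads $A'(r)\ge \frac1r D(r)+\frac1r\wp(r)$ with $D\ge 0$; integrating it and using boundedness of $A$ (which is itself not yet available: $A$ contains $\wp$ and $M$, and you only know $\liminf M<\infty$) yields at best an \emph{upper} bound on $\int \frac1r\bigl(D(r)+\wp(r)\bigr)dr$, never a positive lower bound on $\int\wp(r)\,dr/r$. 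Worse, positivity of $\wp$ is not the right target: as the paper notes, an exactly self-similar field has $\wp\equiv 0$, so under \eqref{eq:13-alt} one can only hope that $\wp$ is small, of either sign. That is exactly what the paper proves: Proposition \ref{prop:cubicEst} (a compactness/contradiction argument using \v{S}ver\'ak's classification in the bounded-$M_k$ case and, in the unbounded case, the Euler classification Theorem \ref{thm:Heuler} with vanishing Bernoulli pressure together with Proposition \ref{prop:HP}) gives $|\wp(R)|\le\delta_1+\delta_2 M(R)^{3/2}$; the positivity needed in the monotonicity argument then comes from the dissipation term $D(R)\ge \tfrac14 M(R)\gtrsim m$ dominating this small $\wp$, not from $\wp$ itself (Lemma \ref{lemma:reg}; compare Lemma \ref{lem:positiveinf}, whose hypothesis concerns $Q+\wp$, not $\wp$).

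Two further gaps. First, your claim that the leading term $\bigl(|h_R|^2+2P_{h_R}\bigr)\,h_R\cdot\frac{x}{|x|}$ integrates to zero is unjustified: $h_R=\mP_R[u]$ is merely a projection, not a solution, and the vanishing of the cubic self-similar contribution is obtained in the paper only for blow-up limits, where the limit is an actual homogeneous solution of Navier--Stokes or of Euler; this classification machinery is the technical heart of the proof and is absent from your sketch, and a direct Hardy/Sobolev/Calder\'on--Zygmund estimate of the remainder cannot replace it. Second, the cubic term scales like $M^{3/2}$, so to dominate it by a fraction of $M$ (or of $m$) one needs $\sup_{0<R\le 1}M(R)<\infty$, which does not follow from $\liminf_{R\to 0} M(R)<\infty$; the paper obtains this uniform bound through the iteration Lemmas \ref{lemma:iter}--\ref{lemma:rIter} and Lemma \ref{lemma:MBound} applied to the local energy inequality, and it is also what makes $A(R)$ bounded. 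Without it, your final inequality $\wp(R)\ge c(m)M(R)-C\sqrt{\ep}\,M(R)$ (whose first term, as argued above, is not established, and whose error term is dimensionally of order $M$ rather than $M^{3/2}$) cannot be closed. Finally, the regime of small $m$ is handled in the paper by the $\varepsilon$-regularity Lemmas \ref{lem:Lin}--\ref{lem:Linbbb} and Proposition \ref{prop:fM}, not by Theorem \ref{TH-00}.
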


It is known that if $u\in \mH$ then $u=0$ \cite{Sverak}. In this context, Theorem \ref{TH} 
states that if $\|u-\mP_R[u]\|_{\mathcal{W}^{1,2}(R)}$ is small compared to $\|u\|_{\mathcal{W}^{1,2}(R)}$ then 
$\text{\Large$\wp$}(r)$ is smaller than $M^{\frac32}(r)$, which 
after application of Proposition \ref{prop:dAdr} implies that 
$u=0$.

As opposed to the main result in \cite{CKN}, we do not assume that $u$ is small in some scale invariant seminorm, reminiscent to  
the ``local" Reynolds number $\frac1{r}\int_{B_r}|\nabla u|^2$. This  leads us to the classification of the 
self-similar solution of the incompressible Euler equations in $\R^5$. 
In fact, we prove that for such solutions 
the Bernoulli pressure is zero. This is the first  key point in our proof of the main technical result, Proposition \ref{prop:cubicEst}.

The second key point is the construction of a monotonicity formula for the suitable weak solutions, which follows from 
the weak energy inequality. 

We compare Theorem \ref{TH} with the well-known 
regularity criteria for suitable weak solutions of 
\eqref{eq:problem-3d}, which in its most general form, can be stated as follows:
let $Q(R)=B_R\times(-R^2, 0)$ and define the local Reynolds numbers
\[
E(R)=\frac1R\int_{Q(R)}|\nabla u|^2, \quad C(R)=\frac1{R^2}\int_{Q(R)}|u|^3.
\]


Then the following statement holds: for every $M>0$ there is 
$\ep(M)>0$ such that 
$\limsup_{R\to 0} C(R)<M, \liminf_{R\to 0}E(R)<\epsilon(M)$ imply that the origin is a regular point.

This result can be found in 
Seregin's paper \cite{Seregin}, Theorem 1.4. Note that
 $\limsup_{R\to 0} C(R)<M$ implies that there is a constant $C_0(M)$ 
 depending on $M$ such that $\limsup_{R\to 0} E(R)<C_0(M)$ \cite{Lewis}.

As opposed to this result, we do not impose the finiteness of upper limit 
of the Reynolds number. Instead, we assume that the lower limit is finite, i.e.
\[
\liminf_{R\to 0} M(R)<\infty.
\]

This is the replacement of the condition $\limsup_{R\to 0} C(R)<M$ in \cite{Seregin}.

As for the other condition, $\liminf_{R\to 0}E(R)<\epsilon(M)$, it is replaced by closeness assumption: more precisely, we assume that there is a vectorfield $h$  
homogeneous of degree negative one such that 
$u-h$ has a suitable small norm compared to $M(R)$. 
See Section \ref{sec:energy}
for precise definitions. 
Hence our conditions are weaker.

The paper is organized as follows:
In Section \ref{sec:mon} we introduce one of our main technical tools, the 
monotonicity formula and prove Theorem \ref{TH-00}. In the next section 
we classify the self-similar solutions of the incompressible Euler equations in 
$\mathbb R^5$, and prove that for such solutions the Bernoulli 
pressure  is zero. 
In Section \ref{sec:energy} we prove some estimates for the pressure. 
Section \ref{sec:technical} contains one of our main estimates of the 
cubic term that appears in the local energy inequality.  
In order to control the growth of $M(R)$ we prove an iteration result in Section \ref{sec:iteration}, and apply it to obtain a local bound in Section 
\ref{sec:sup-est}.
The proof of Theorem \ref{TH} is given in Section.
\ref{sec:proofofmain}.
We also added an appendix at the end of the paper that contains 
some estimates and computations used in the proof of Theorem \ref{TH}.

\section{Notations}
We fix some notation that will be used throughout the paper. 
\begin{enumerate}[label=(\arabic*),ref=(\arabic*)]
	\item For $R>0$, we set $B_R\vcentcolon= \{ x\in \R^5 \, \vert \, |x|\le R \}$ and $B\vcentcolon= B_1$. For function $f\vcentcolon B_R \to \R$, we denote
	\begin{equation*}
		[f]_R \vcentcolon= \frac{1}{|B_R|} \int_{B_R} f, \qquad [f]\vcentcolon= [f]_1.
	\end{equation*}
	\item For $R>0$ and function, we define the functional
	\begin{equation}\label{MR}
		M[u](R) \vcentcolon= \int_{B_R} \Big( \frac{|u|^2}{R^3} + \frac{|\nabla u|^2}{R} \Big).
	\end{equation}
	Moreover, if the choice of function $u$ is unambiguous then we also use the abbreviated notation 
	\begin{equation*}
		M(R) \vcentcolon= M[u](R).
	\end{equation*}
\end{enumerate}


\section{Monotonicity formula for the stationary Navier-Stokes system}\label{sec:mon}
Let $(u, P)$ be a stationary solution to the Navier-Stokes equations:
\begin{subequations}\label{st-NS-N}
	\begin{align}
		& \div u = 0 && \text{for } \ x\in\R^N, \label{st-NS1}\\
		& (u\cdot \nabla) u + \nabla P = \Delta u && \text{for } \ x\in\R^N, \label{st-NS2}
	\end{align}
	where $u \in \mathcal W^{1,2 }_{loc}(\R^N)$.
\end{subequations}

Given a weak solution $(u,P)$, we set the \textbf{energy defect measure} $\mu \vcentcolon \mC_{c}^{\infty}(\R^N) \to \R$ as
\begin{align}\label{mu}
	\mu(\phi) \vcentcolon= \int_{\R^N}\!\! \Big\{ \big( \frac{|u|^2}{2} + P \big) (u\cdot \nabla) \phi + \frac{|u|^2}{2} \Delta\phi - \phi |\nabla u|^2 \Big\}\, d x\ge 0, \qquad \text{for } \ \phi \in \mC_{c}^{\infty}(\R^N). 
\end{align}

\begin{definition}
	A weak solution $(u,P)$ is defined to be a \textbf{suitable weak solution} of \eqref{st-NS-N}, if there exits a measure $\mu$ such that 
	\begin{equation}\label{LEI}
		\Delta \frac{|u|^2}{2} - |\nabla u|^2 - \div \Big\{ \big(\frac{|u|^2}{2} + P\big) u \Big\} = \mu \quad \text{in the sense of distribution.}
	\end{equation}
\end{definition}
\begin{lemma}
For $u^\circ\in \mathcal W^{2,2}(\Omega)$, with small norm,  and $\Omega$ a bounded domain with Lipschitz 
boundary there is a suitable weak solution to the problem 
\begin{align*}
\left.\begin{array}{rrr}
u \cdot \nabla u+\nabla P=\Delta u, \\
\div u =0,
\end{array}
\right\}
\quad\mbox{in}\ \Omega\subset \R^5, \\
u=u^\circ\quad \mbox{on} \ \partial \Omega.
\end{align*}
\end{lemma}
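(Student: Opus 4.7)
The plan is to construct $u$ as a limit of smooth approximations $u_\epsilon$ obtained from a Leray-type regularization of \eqref{st-NS-N}, and to recover the local energy inequality \eqref{LEI} by passing to the limit. First I would extend $u^\circ$ to a divergence-free field $U\in \mathcal{W}^{2,2}(\Omega)$ via the Bogovskii/Hopf construction, whose norm is controlled by $\|u^\circ\|_{\mathcal{W}^{2,2}(\Omega)}$; writing the unknown as $u_\epsilon = v_\epsilon + U$ with $v_\epsilon\in \mathcal{W}^{1,2}_0(\Omega)$ reduces the problem to homogeneous Dirichlet data with source depending on $U$. For a mollifier $\eta_\epsilon$ composed with the Leray projection (to preserve the divergence constraint under convolution near $\partial\Omega$), I would solve
\[
(\eta_\epsilon\ast u_\epsilon)\cdot\nabla u_\epsilon+\nabla P_\epsilon=\Delta u_\epsilon,\qquad \div u_\epsilon=0, \qquad u_\epsilon|_{\partial\Omega}=u^\circ,
\]
by Banach contraction on the Stokes resolvent. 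This is the step that consumes the smallness of $\|u^\circ\|_{\mathcal{W}^{2,2}}$: the map $v\mapsto w$ sending $v$ to the Stokes solution with forcing $-(\eta_\epsilon\ast(v+U))\cdot\nabla(v+U)$ is a contraction on a small ball of $\mathcal{W}^{1,2}_0$ provided $\|U\|_{\mathcal{W}^{2,2}}$ is small enough. The resulting $u_\epsilon$ is smooth, and testing against $v_\epsilon$ yields $\|v_\epsilon\|_{\mathcal{W}^{1,2}(\Omega)}\lesssim \|U\|_{\mathcal{W}^{2,2}(\Omega)}$ uniformly in $\epsilon$.

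Next I would extract the limit. Rellich compactness in five dimensions gives, along a subsequence, $u_\epsilon\to u$ strongly in $L^q(\Omega)$ for every $q<10/3$ and $u_\epsilon\rightharpoonup u$ weakly in $\mathcal{W}^{1,2}$. This is enough to pass to the limit distributionally in $(\eta_\epsilon\ast u_\epsilon)\cdot\nabla u_\epsilon$; the pressure $P_\epsilon$, recovered from $-\Delta P_\epsilon=\partial_i\partial_j((\eta_\epsilon\ast u_\epsilon)^i u_\epsilon^j)$ by Calder\'on--Zygmund, is uniformly bounded in $L^{5/3}$ and converges weakly to $P$. To verify \eqref{LEI}, I test the smooth equation for $u_\epsilon$ against $u_\epsilon\phi$ for $\phi\in \mC^\infty_c(\Omega)$ with $\phi\ge 0$; using $\div(\eta_\epsilon\ast u_\epsilon)=0$, integration by parts yields the identity
\[
\int_\Omega\phi|\nabla u_\epsilon|^2=\int_\Omega\Big\{\tfrac{|u_\epsilon|^2}{2}(\eta_\epsilon\ast u_\epsilon)\cdot\nabla\phi+P_\epsilon u_\epsilon\cdot\nabla\phi+\tfrac{|u_\epsilon|^2}{2}\Delta\phi\Big\}.
\]
Strong $L^3$ convergence of $u_\epsilon$ (since $3<10/3$) passes the cubic and quadratic terms, while $P_\epsilon\rightharpoonup P$ in $L^{5/3}$ paired with $u_\epsilon\to u$ strongly in $L^{5/2}$ handles the pressure term. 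Weak lower semi-continuity of $v\mapsto \int\phi|\nabla v|^2$ gives $\int\phi|\nabla u|^2\le\liminf\int\phi|\nabla u_\epsilon|^2$, and rearranging produces $\mu(\phi)\ge 0$, i.e.\ \eqref{LEI}.

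The main obstacle is the compatibility of the convolution mollifier with the prescribed Dirichlet data on a mere Lipschitz domain, where extending $u_\epsilon$ to apply $\eta_\epsilon\ast\cdot$ and re-projecting divergence-freely requires care near $\partial\Omega$. A more robust alternative is to replace the Leray regularization by a Galerkin approximation in a countable divergence-free basis of $\mathcal{W}^{1,2}_0(\Omega)$, solve each finite-dimensional system by Brouwer's theorem (again exploiting the smallness of $U$ to bound $\|v_n\|_{\mathcal{W}^{1,2}}$), and carry out the identical passage-to-limit argument. Dimension five enters critically through the Sobolev exponent $10/3$: it is precisely the condition $10/3>3$ that permits strong $L^3$ convergence of $u_\epsilon$ and hence the limit in the cubic flux $|u|^2 u$ appearing in \eqref{mu}.
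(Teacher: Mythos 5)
Your strategy is essentially the paper's: the paper likewise mollifies the convecting field, writes $u=v+u^\circ$ to reduce to homogeneous Dirichlet data, solves the regularized problem by a Galerkin scheme in a countable divergence-free basis, obtains the uniform bound $\int_\Omega|\nabla v_k|^2\le C(\|u^\circ\|_{\mathcal W^{2,2}})$ from the smallness of $u^\circ$ (the cubic term cancels, and the terms quadratic in $v$ are absorbed using $\mathcal W^{1,2}\hookrightarrow L^{10/3}$ and $\nabla u^\circ\in L^{5/2}$, uniformly in the mollification parameter), observes that the approximations lie in $\mathcal W^{2,2}$ so that $u_k\psi$ is an admissible test function, and passes the local energy inequality to the limit (first in the Galerkin index, then in the mollification parameter) by the compactness you describe. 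Your Galerkin/Brouwer ``alternative'' and your limit argument (strong $L^3$ and $L^{5/2}$ convergence of the velocities, uniform $L^{5/3}$ pressure bounds, weak lower semicontinuity of $\int\phi|\nabla\cdot|^2$) are precisely the paper's proof in outline.

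One concrete caveat about your primary route: the Banach contraction on the Stokes resolvent does not close with a smallness threshold independent of $\epsilon$. Estimating the difference of forcings requires terms like $\|\eta_\epsilon*(v_1-v_2)\|_{L^5}\,\|\nabla(v+U)\|_{L^2}$ (or an $L^\infty$ bound on the mollified field), and in $\R^5$ one only has $\mathcal W^{1,2}\hookrightarrow L^{10/3}$, so upgrading to $L^5$ or $L^\infty$ through the mollifier costs a constant $C_\epsilon\to\infty$ as $\epsilon\to 0$; the contraction then demands $\|u^\circ\|_{\mathcal W^{2,2}}\lesssim C_\epsilon^{-1}$, which fails for your fixed boundary datum once $\epsilon$ is small. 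This is exactly the supercriticality of the five-dimensional stationary problem: the smallness of $u^\circ$ should be spent only on the $\epsilon$-independent a priori energy bound, while solvability of each regularized problem should come from Galerkin/Brouwer (or Leray--Schauder), as in your fallback and in the paper. With that substitution the argument is sound.
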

\begin{proof}
The existence of a suitably weak solutions with a boundary condition $u^\circ\in  \mathcal W^{2,2}(\Omega)$ on $\partial \Omega$ follows from a standard approximation argument: suppose that 
$\Omega$ is bounded and $\partial \Omega$ is smooth, 
$\rho_\epsilon,\epsilon>0$ is the standard mollifier, then we consider the following problem
\begin{align*}\label{eq:problem-k8}
\left.\begin{array}{rrr}
((u*\rho_\epsilon)\cdot \nabla u)+\nabla P=\Delta u, \\
\div u =0,
\end{array}
\right\}
\quad\mbox{in}\ \Omega\subset \R^5, \\
u=u^\circ\quad \mbox{on} \ \partial \Omega.
\end{align*}
We can write $u=v+u^\circ$, and reduce the problem to 
homogeneous boundary condition for $v$, which now solves the 
system
\begin{align*}
((v*\rho_\epsilon)\cdot \nabla v)+ ((u^\circ*\rho_\epsilon)\cdot \nabla v)+((v*\rho_\epsilon)\cdot \nabla u^\circ)+\nabla P=\Delta v+\Delta u^\circ- ((u^\circ*\rho_\epsilon)\cdot \nabla u^\circ), 
\\
\div v =0,
\end{align*}
in $\Omega$.
It follows from 
Galerkin's method \cite{Galdi} that there is a 
weak solution $v_k \in  \mathcal W^{2,2}(\Omega)$ 
of the problem

\begin{equation}\label{eq:problem-k}
\left.\begin{array}{rrr}
((v_k*\rho_\epsilon)\cdot \nabla v_k)+ ((u^\circ*\rho_\epsilon)\cdot \nabla v_k)+((v_k*\rho_\epsilon)\cdot \nabla u^\circ)+\nabla P_k=\Delta v_k+f^\circ, \\
\div v_k =0,
\end{array}
\right\}
\quad\mbox{in}\ \Omega\subset \R^5, 
\end{equation}
where $u_k$ belongs to the span of the first 
$k$ functions of the countable basis in $ \mathcal W^{1, 2}(\Omega)$ of smooth divergence free vectorfields 
$\{\phi_m\}_{m=1}^\infty$ vanishing on $\partial \Omega$, and 
$f^\circ=\Delta u^\circ- ((u^\circ*\rho_\epsilon)\cdot \nabla u^\circ)$. 
Note that 
\[
\int_{\Omega} ((v_k*\rho_\epsilon)\cdot \nabla v_k)\phi_l=-
\int_{\Omega} ((v_k*\rho_\epsilon)\otimes v_k)\div \phi_l=0, \quad l=1, \dots, k.
\]
Consequently 
\[
\int_{\Omega}|\nabla v_k|^2\lesssim \int_\Omega |v_k|^2|\nabla(u^\circ*\rho_\epsilon)|
+
|((v_k*\rho_\epsilon)\cdot \nabla u^\circ)||v_k|
+
|f^\circ v_k|.
\]
See \cite{Galdi} Lemma IX.3.2. and Theorem IX.4.1 and Remark IX.4.10.
Therefore, under suitable assumptions on $\|u^\circ\|_{W^{2,2}}$ we obtain the 
uniform estimate $\int_{\Omega}|\nabla v_k|^2\le C(\|u^\circ\|_{ \mathcal  W^{2,2}})$.

Moreover, the solutions $u_k=u^\circ+v_k\in \mathcal W^{2, 2}(\Omega)$, and hence $u_k\psi$
is an admissible test function in the weak formulation of the equation, implying 
\begin{equation}
		\int |\nabla u_k|^2 \psi
		\le 
		\int \left(-u_k \nabla u_k +(|u_k|^2+2P_k)u_k\right)\cdot \nabla \psi.
	\end{equation}
Thus the existence of a suitable weak solution follows from a 
standard compactness argument, by first letting $k\to \infty$ for a fixed $\epsilon$, and then 
$\epsilon \to 0$. 
\end{proof}

\begin{proposition}\label{prop:dAdr}
	Suppose $N=5$ and $(u,P)$ is a suitable weak solution of \eqref{st-NS-N}. For $r>0$, define
	\begin{equation}
		\left\{
		\begin{aligned}
			&D(r)\vcentcolon=   \int_{B_r} \Big\{ \frac{15}{4r^3}|u|^2 + \frac{1}{4r}|\nabla u |^2 + \frac{3}{4r^3}\left| \nabla (|x|u) \right|^2 + \frac{3(r^2-|x|^2)}{4r^3} |\nabla u|^2\Big\}\, dx,\\
			&A(r) \vcentcolon= \frac{1}{r^3}\int_{B_r}(x\cdot \nabla)\frac{|u|^2}{2}\, dx + \frac{9}{4r^3}\int_{ B_r}|u|^2 \, dx  - \frac{1}{r^{2}} \int_{B_r} \big\{ \frac{|u|^2}{2} + P \big\} u\cdot \frac{x}{|x|}  \, dx .
		\end{aligned}
		\right. 
	\end{equation}
	Then the following differential equation holds for $r>0$,
	\begin{equation*}
		\frac{d A}{d r} \ge \frac{1}{r} D(r) + \frac{2}{r^3} \int_{B_r}  \Big\{\frac{|u|^2}{2} + P \Big\} (u\cdot \frac{x}{|x|})  \, dx .
	\end{equation*}
\end{proposition}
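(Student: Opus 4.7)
The approach is to prove the exact identity
\[
\frac{dA}{dr} = \frac{1}{r}D(r) + \frac{2}{r^3}\int_{B_r}\!\!\Big(\frac{|u|^2}{2}+P\Big)u\cdot\frac{x}{|x|}\,dx + \frac{\mu(B_r)}{r^2},
\]
and then read off the inequality from $\mu(B_r)\ge 0$, which holds because $(u,P)$ is a suitable weak solution. The non-negativity is obtained by testing the local energy inequality \eqref{LEI} against a smooth non-negative approximation of $\chi_{B_r}$ and passing to the limit; for a.e.\ $r$ (where traces of $u$ and $P$ on $\partial B_r$ are defined) this produces the boundary-integral form
\[
\mu(B_r) = \int_{\partial B_r}\partial_\nu \frac{|u|^2}{2}\,dS - \int_{\partial B_r}\Big(\frac{|u|^2}{2}+P\Big)u\cdot\nu\,dS - \int_{B_r}|\nabla u|^2\,dx \ge 0.
\]

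The first step is to compute $\frac{dA}{dr}$ directly using the rule $\frac{d}{dr}\bigl[r^{-k}\!\int_{B_r}\!f\bigr] = -kr^{-k-1}\!\int_{B_r}\!f + r^{-k}\!\int_{\partial B_r}\!f\,dS$ on each of the three bulk integrals in $A(r)$. Using $(x\cdot\nabla)f = r\,\partial_\nu f$ and $u\cdot x/|x| = u\cdot\nu$ on $\partial B_r$, the boundary pieces produced this way line up exactly with the three boundary terms in $\mu(B_r)$, up to one leftover integral $\frac{9}{4r^3}\int_{\partial B_r}|u|^2\,dS$. After subtracting $\mu(B_r)/r^2$ and the explicit cubic $\frac{2}{r^3}\int_{B_r}(\frac{|u|^2}{2}+P)u\cdot\frac{x}{|x|}\,dx$, the remainder becomes a specific linear combination of $\int_{B_r}|u|^2$, $\int_{B_r}(x\cdot\nabla)\frac{|u|^2}{2}$, $\int_{B_r}|\nabla u|^2$, and the boundary term $\int_{\partial B_r}|u|^2\,dS$.

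The second step is to show this remainder coincides with $\frac{1}{r}D(r)$. Expanding $\nabla(|x|u) = u\otimes\hat{x} + |x|\nabla u$ yields the key identity $|\nabla(|x|u)|^2 = |u|^2 + (x\cdot\nabla)|u|^2 + |x|^2|\nabla u|^2$. Inserting this into $D(r)$ makes the $|x|^2|\nabla u|^2$ piece cancel against the $-|x|^2|\nabla u|^2$ coming from the $\frac{3(r^2-|x|^2)}{4r^3}|\nabla u|^2$ term, collapsing $\frac{1}{r}D(r)$ into a combination of exactly the same four quantities as above. A single integration by parts, $\int_{B_r}(x\cdot\nabla)|u|^2\,dx = -5\int_{B_r}|u|^2 + r\int_{\partial B_r}|u|^2\,dS$ (with $N=5$), is then used to match both sides.

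The main obstacle is coefficient bookkeeping: the five factors $\tfrac{15}{4}$, $\tfrac{1}{4}$, $\tfrac{3}{4}$, $\tfrac{3(r^2-|x|^2)}{4r^3}$, and $\tfrac{9}{4}$ in $D(r)$ and $A(r)$ are tuned so that three separate cancellations happen at once: the $\int_{B_r}|u|^2$ bulk terms cancel, the $\int_{B_r}|\nabla u|^2$ terms combine to $\frac{1}{r^2}\int_{B_r}|\nabla u|^2$, and the surplus boundary integral $\int_{\partial B_r}|u|^2\,dS$ from $\frac{dA}{dr}$ cancels precisely against the one generated by the integration by parts on $(x\cdot\nabla)|u|^2$. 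Verifying these simultaneously is the entire content of the formula; the analytical point of justifying $\mu(B_r)\ge 0$ for a.e.\ $r$ by approximation is routine.
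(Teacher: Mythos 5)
Your proposal is correct and follows essentially the same route as the paper: test the local energy inequality against a mollified $\chi_{B_r}$ to obtain the boundary form of $\mu(B_r)\ge 0$, expand $|\nabla(|x|u)|^2 = |u|^2 + (x\cdot\nabla)|u|^2 + |x|^2|\nabla u|^2$ so the $|x|^2|\nabla u|^2$ pieces in $D(r)$ cancel, and then close the identity with the single integration by parts $\int_{B_r}(x\cdot\nabla)|u|^2 = -5\int_{B_r}|u|^2 + r\int_{\partial B_r}|u|^2$ — the only difference is organizational, in that you compute $A'(r)$ via the Leibniz rule and verify the identity backwards rather than transforming the boundary inequality forward. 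One minor wording quibble: the leftover $|u|^2$ bulk and $\int_{\partial B_r}|u|^2$ boundary contributions do not vanish after the integration by parts; both sides reduce to $\tfrac{3}{4r^4}\int_{B_r}|u|^2 + \tfrac{3}{4r^3}\int_{\partial B_r}|u|^2 + \tfrac{1}{r^2}\int_{B_r}|\nabla u|^2$, so they match rather than cancel, but this does not affect the validity of the argument.
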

\begin{proof}
	Let us consider the function 
	\begin{equation}
		\phi(x)=
		\left\{
		\begin{array}{lll}
			1 \quad & \mbox{if}  \ |x|<r,\\
			\frac{r-|x|}\epsilon \quad &\mbox{if} \ r\le |x|\le r+\epsilon,\\
			0 \quad & \mbox{if}\ |x|>r+\epsilon.\\
		\end{array}
		\right.
	\end{equation}
	We then mollify this function and take $\psi=\phi*\rho_\delta$, 
	where $\rho$ is the mollification kernel. 
	Note that $\psi\in  \mathcal C_c^\infty(\R^N)$.
	We use $\psi$ as a test function in the local 
	energy inequality  to obtain 
	\begin{equation}
		\int |\nabla u|^2 \psi
		\le 
		\int \left(-u \nabla u +(|u|^2+2P)u\right)\cdot \nabla \psi.
	\end{equation}

	For fixed $\epsilon$, let $\delta\to 0$. Using Lebesgue's theorem we obtain   
	
	\begin{equation}
		\int |\nabla u|^2 \phi
		\le 
		\int \left(-u \nabla u +(|u|^2+2P)u\right)\cdot \nabla \phi.
	\end{equation}
	Note that 
	\begin{equation}
		\nabla \phi(x)=
		\left\{
		\begin{array}{lll}
			0 \quad & \mbox{if}  \ |x|<r,\\
			-\frac1\epsilon\frac{x}{|x|} \quad &\mbox{if} \ r\le |x|\le r+\epsilon,\\
			0 \quad & \mbox{if}\ |x|>r+\epsilon,\\
		\end{array}
		\right.
	\end{equation}
	hence for this choice of the test function the energy inequality 
	takes the following form 
	
	\begin{equation}
		\int_{B_{r+\epsilon}} |\nabla u|^2 \phi
		\le 
		\int_{B_{r+\epsilon}\setminus B_{r}} -\frac1\epsilon\left(-u \nabla u +(|u|^2+2P)u\right)\cdot \frac{x}{|x|}.
	\end{equation}
	Integrate over the interval $r\in [a, b]$ to get 
	
	\begin{align*}
		\int_a^b dr\int_{B_{r+\epsilon}} |\nabla u|^2 \phi
		&\le 
		\int_a^b dr\int_{B_{r+\epsilon}\setminus B_{r}} -\frac1\epsilon\left(-u \nabla u +(|u|^2+2P)u\right)\cdot \frac{x}{|x|}\\
		&=
		\int_a^b dr\int_{B_{r+\epsilon}} -\frac1\epsilon\left(-u \nabla u +(|u|^2+2P)u\right)\cdot \frac{x}{|x|} \\
		&-\int_a^b dr\int_{B_{r}} -\frac1\epsilon\left(-u \nabla u +(|u|^2+2P)u\right)\cdot \frac{x}{|x|}.
\end{align*}		
Substituting $r=s-\epsilon, s\in [a+\epsilon, b+\epsilon]$ in the first integral yields
\begin{align*}
\int_a^b dr\int_{B_{r+\epsilon}} |\nabla u|^2 \phi
&\leq\int_{a+\epsilon}^{b+\epsilon}dr\int_{B_r} -\frac1\epsilon\left(-u \nabla u +(|u|^2+2P)u\right)\cdot \frac{x}{|x|}\\
		&-
		\int_{a}^{b}dr\int_{B_r} -\frac1\epsilon\left(-u \nabla u +(|u|^2+2P)u\right)\cdot \frac{x}{|x|}\\
		&=-\int^{a+\epsilon}_{a}dr\int_{B_r} -\frac1\epsilon\left(-u \nabla u +(|u|^2+2P)u\right)\cdot \frac{x}{|x|}\\
		&+
		\int^{b+\epsilon}_{b}dr\int_{B_r} -\frac1\epsilon\left(-u \nabla u +(|u|^2+2P)u\right)\cdot \frac{x}{|x|}.
	\end{align*}
	Since the integrals over $B_r$ are continuous function of $r$, then after applying the mean value 
	theorem, we get  
	\begin{align*}
		\int_a^b dr\int_{B_{r+\epsilon}} |\nabla u|^2 \phi
		&\le 
		\int_{B_{r_1^*(\epsilon)}}  \left(-u \nabla u +(|u|^2+2P)u\right)\cdot \frac{x}{|x|}\\
		&-
		\int_{B_{r_2^*(\epsilon)}} \left(-u \nabla u +(|u|^2+2P)u\right)\cdot \frac{x}{|x|}, 
	\end{align*}
	where $r_1^*(\epsilon)\in [a, a+\epsilon]$ and $r_2^*(\epsilon)\in [b, b+\epsilon]$.
	Letting $\epsilon\to 0$ and using Lebesgue's dominated convergence theorem we infer 
	\begin{align*}
		\int_a^b dr\int_{B_{r}} |\nabla u|^2 
		&\le 
		\int_{B_{a}}  \left(-u \nabla u +(|u|^2+2P)u\right)\cdot \frac{x}{|x|}\\
		&-
		\int_{B_{b}} \left(-u \nabla u +(|u|^2+2P)u\right)\cdot \frac{x}{|x|}.
	\end{align*}
	Taking $b=R+\Delta R, a=R$, we get for almost every $R$, the following inequality 
	\begin{align}\label{distr}
		\int_{B_{R}} |\nabla u|^2 
		&\le
		-\int_{\partial B_{R}} \left(-u \nabla u +(|u|^2+2P)u\right)\cdot \frac{x}{|x|}.
	\end{align}
	
	It is convenient to rewrite \eqref{distr} in the following equivalent form 
	\begin{align}\label{temp1:dAdr}
		\frac{1}{2r} \int_{\d B_r} x\cdot \nabla |u|^2 \, d S_x  - \int_{B_r} |\nabla u|^2 \, d x - \int_{\d B_r} \frac{x}{|x|}\cdot u \big(\frac{|u|^2}{2} + P \big)\, dS_x \ge 0 
	\end{align}
	By divergence theorem, we obtain that
	\begin{align*}
		&
		\frac{1}{2r^{N-2}} \int_{\d B_r} x\cdot \nabla |u|^2 \, d S_x = \frac{1}{2r^{N-2}} \frac{d}{d r} \int_{B_r} x\cdot \nabla |u|^2\, dx\\
		=& \frac{d}{dr} \bigg( \frac{1}{2 r^{N-2}} \int_{B_r} x\cdot \nabla |u|^2 \, dx \bigg) + \frac{N-2}{2r^{N-1}} \int_{B_r} x\cdot \nabla |u|^2 \, dx\\
		=& \frac{d}{dr} \bigg( \frac{1}{2 r^{N-2}} \int_{B_r} x\cdot \nabla |u|^2 \, dx \bigg) + \frac{N-2}{2r^{N-2}} \int_{ \d B_r} |u|^2 \, dS_x - \frac{N(N-2)}{2 r^{N-1}} \int_{B_r} |u|^2 \, dx\\
		=& \frac{d}{d r} \bigg( \frac{1}{2 r^{N-2}} \int_{B_r} x\cdot \nabla |u|^2 \, dx + \frac{N-2}{2 r^{N-2}} \int_{B_r} |u|^2 \, dx \bigg)  - \frac{N-2}{r^{N-1}} \int_{B_r} |u|^2 \, dx \\
		=& \frac{d}{d r} \bigg( \frac{1}{2 r^{N-2}} \int_{B_r} \big\{ N|u|^2 + x\cdot \nabla |u|^2 \big\} \, dx \bigg) - \frac{1}{r^{N-2}}\int_{\d B_r} |u|^2 \, d S_x.
	\end{align*}
	Multiplying equation \eqref{temp1:dAdr} by $r^{3-N}$, then substituting the above identity, we get
	\begin{align}\label{temp2:dAdr}
		&\frac{d}{d r} \bigg( \frac{1}{2 r^{N-2}} \int_{B_r} \big\{ N|u|^2 + x\cdot \nabla |u|^2 \big\} \, dx \bigg)\\
		 \ge & \frac{1}{r^{N-2}}\int_{\d B_r} |u|^2 \, d S_x + \frac{1}{r^{N-3}}\int_{B_r}|\nabla u|^2 \, dx 
		 + \frac{1}{r^{N-2}}\int_{\d B_r} x\cdot u \big(\frac{|u|^2}{2} + P \big)\, dS_x.\nonumber
	\end{align}
	Moreover, we have by completing the square that,
	\begin{align*}
		\int_{B_r}|u|^2 \, dx 
		=&\int_{B_r} \big|u+(x\cdot \nabla) u\big|^2\, dx - \int_{B_r} \big\{ x\cdot \nabla |u|^2 + \big|(x\cdot \nabla) u\big|^2 \big\} \, dx.
	\end{align*}
	Using this, we obtain that
	\begin{align*}
		\frac{N-1}{r^{N-2}}\int_{\d B_r} |u|^2 \, d S_x =&  \frac{1}{r^{N-2}}\frac{d}{dr}\int_{B_r}|u|^2\, d x + \frac{N-2}{r^{N-2}}\int_{ \d B_r}|u|^2 \, d S_x \\
		=& \frac{d}{dr}\bigg( \frac{1}{r^{N-2}} \int_{B_r}|u|^2\, dx \bigg)  +\frac{N-2}{r^{N-1}}\int_{B_r} |u|^2 \, dx + \frac{N-2}{r^{N-2}}\int_{\d B_r} |u|^2 \, dx \\
		=& \frac{d}{dr}\bigg( \frac{1}{r^{N-2}} \int_{B_r}|u|^2\, dx \bigg)  + \frac{N-2}{r^{N-1}}\int_{B_r} \big|u+(x\cdot \nabla)u\big|^2\, dx\\ 
		& - \frac{N-2}{r^{N-1}}\int_{B_r} |(x\cdot \nabla) u|^2\, dx - \frac{N-2}{r^{N-1}}\int_{B_r} x\cdot \nabla |u|^2 \, dx  + \frac{N-2}{r^{N-2}}\int_{\d B_r} |u|^2 \, dx\\
		=& \frac{d}{dr}\bigg( \frac{1}{r^{N-2}} \int_{B_r}|u|^2\, dx \bigg)  + \frac{N-2}{r^{N-1}}\int_{B_r} \big|u+(x\cdot \nabla)u\big|^2\, dx \\ 
		&- \frac{N-2}{r^{N-1}}\int_{B_r} \big|(x\cdot \nabla) u\big|^2\, dx + \frac{N(N-2)}{r^{N-1}}\int_{B_r} |u|^2 \, dx.
	\end{align*}
	Dividing both sides by $(N-1)$ yields the following equation
	\begin{align*}
		\frac{1}{r^{N-2}}\int_{\d B_r} |u|^2 \, d S_x =& \frac{d}{dr}\bigg( \frac{1}{(N-1)r^{N-2}} \int_{B_r}|u|^2\, dx \bigg)  + \frac{N-2}{N-1}\frac{1}{r^{N-1}}\int_{B_r} \big|u+(x\cdot \nabla)u\big|^2\, dx\\ 
		& - \frac{N-2}{N-1}\frac{1}{r^{N-1}}\int_{B_r} \big|(x\cdot \nabla) u\big|^2\, dx + \frac{N(N-2)}{N-1}\frac{1}{r^{N-1}}\int_{B_r} |u|^2 \, dx. 
	\end{align*}
	Substituting the above into equation \eqref{temp2:dAdr}, we have
	\begin{align}\label{temp3:dAdr}
		&\frac{d}{dr} \bigg( \frac{1}{r^{N-2}} \int_{B_r} x\cdot \nabla \frac{|u|^2}{2} + \frac{N^2-N-2}{2(N-1)} \frac{1}{r^{N-2}}\int_{B_r}|u|^2\, dx \bigg)\\
		\ge & 
		\frac{N-2}{N-1} \frac{1}{r^{N-1}} \int_{B_r} \big|u+(x\cdot \nabla)u\big|^2 \, dx + \frac{N-2}{N-1} \frac{1}{r^{N-3}}\int_{B_r} \big\{ |\nabla u|^2 - \big|(x\cdot\nabla)u\big|^2  \big\}\, dx  \nonumber\\ 
		& + \frac{N(N-2)}{N-1} \frac{1}{r^{N-1}}\int_{B_r}|u|^2\, dx + \frac{1}{N-1} \frac{1}{r^{N-3}} \int_{B_r} |\nabla u|^2 \, dx  + \frac{1}{r^{N-2}}\int_{\d B_r} x\cdot u \big(\frac{|u|^2}{2} + P \big)\, dS_x. \nonumber
	\end{align}
	Next, we also have that
	\begin{align*}
		&\frac{1}{r^{N-2}} \int_{\d B_r} x\cdot u \left(\frac{|u|^2}{2} + P\right) \, d S_x = \frac{1}{r^{N-3}} \int_{\d B_r} \frac{x}{|x|}\cdot u \left(\frac{|u|^2}{2} + P \right) \, d S_x\\ =& \frac{d}{d r} \bigg\{ \frac{1}{r^{N-3}} \int_{B_r} \frac{x}{|x|}\cdot u \left(\frac{|u|^2}{2} + P\right) d x \bigg\} + \frac{N-3}{r^{N-2}}\int_{B_r} \left(\frac{|u|^2}{2} + P\right)  u\cdot \frac{x}{|x|} \, d x.  
	\end{align*}
Plugging this into \eqref{temp3:dAdr} we obtain the desired result. 	
\end{proof}

\begin{lemma}\label{lem:positiveinf}
Let 
\begin{equation}
Q(r):= \int_{B_r} \Big\{ \frac{15}{4r^3}|u|^2 
+ 
\frac{1}{4r}|\nabla u |^2 
+  
\frac{3(r^2-|x|^2)}{4r^3} |\nabla u|^2\Big\}\, dx,
\end{equation}
and 
\begin{equation}
\text{\Large$\wp$}(r)  =\frac{1}{r^2} \int_{B_r}  \Big\{{|u|^2} + 2P \Big\} (u\cdot \frac{x}{|x|})  \, dx .
\end{equation}
If  $\liminf_{r\to 0} M(r)<\infty$ and 
\[
\liminf_{r\to0^+}\left[ Q(r)
+\text{\Large$\wp$}(r)
\right]> 0
\]
then $x=0$ is a regular point.
\end{lemma}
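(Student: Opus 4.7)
The plan is to combine Proposition \ref{prop:dAdr} with a scale-invariant polynomial upper bound on $|A(r)|$ in terms of $M(r)$, so that the two hypotheses of the lemma force a contradiction unless $x=0$ is a regular point.

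First, I would rewrite the monotonicity inequality in a form involving only $Q(r)+\text{\Large$\wp$}(r)$. Since $D(r)=Q(r)+\frac{3}{4r^{3}}\int_{B_{r}}|\nabla(|x|u)|^{2}\,dx\ge Q(r)$ and
$\frac{2}{r^{3}}\int_{B_{r}}\bigl\{\frac{|u|^{2}}{2}+P\bigr\}(u\cdot\frac{x}{|x|})\,dx=\frac{1}{r}\text{\Large$\wp$}(r)$,
Proposition \ref{prop:dAdr} reduces to
\begin{equation*}
\frac{dA}{dr}(r)\ \ge\ \frac{1}{r}\bigl(Q(r)+\text{\Large$\wp$}(r)\bigr)\quad\text{for a.e. } r>0.
\end{equation*}
By the positivity hypothesis, there exist $c>0$ and $r_{1}>0$ such that $Q(r)+\text{\Large$\wp$}(r)\ge c$ on $(0,r_{1})$. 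Integrating from $r$ to $r_{1}$ gives $A(r)\le A(r_{1})-c\log(r_{1}/r)$, so $A(r)\to -\infty$ as $r\to 0^{+}$.

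Second, I would establish a polynomial upper bound $|A(r)|\le C\bigl(M(r)+M(r)^{3/2}+\text{pressure}\bigr)$. Rescaling via $v(y):=r\,u(ry)$ gives $M(r)=\|v\|_{W^{1,2}(B_{1})}^{2}$. The first two summands of $A(r)$ are controlled by $CM(r)$ using Cauchy-Schwarz and Young's inequality. For the cubic term the Sobolev embedding $W^{1,2}(B_{1})\hookrightarrow L^{10/3}(B_{1})$, valid in dimension $5$, gives
\begin{equation*}
\tfrac{1}{r^{2}}\int_{B_{r}}|u|^{3}\,dx=\int_{B_{1}}|v|^{3}\,dy\le C\,M(r)^{3/2}.
\end{equation*}
The pressure contribution $\tfrac{1}{r^{2}}\int_{B_{r}}|P||u|\,dx$ is handled by H\"older together with the standard $L^{5/3}$ Calder\'on-Zygmund bound for the Navier-Stokes pressure from Section \ref{sec:energy}, yielding a bound of the form $C\,M(r)^{1/2}M(cr)$ for some fixed $c\ge 1$.

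Finally, since $\liminf_{r\to 0}M(r)=m<\infty$, there exists a sequence $r_{k}\downarrow 0$ along which $M(r_{k})$ (and, after a pigeonhole refinement on dyadic scales, also $M(cr_{k})$) remains bounded. Along this subsequence $|A(r_{k})|\le C'$, which contradicts $A(r_{k})\to -\infty$ from the first step. The contradiction shows that the two conditions of the lemma cannot coexist at a singular point; whenever they both hold, $x=0$ must be regular. The hard part will be precisely this pressure step: the Calder\'on-Zygmund estimate naturally lives on a slightly enlarged ball $B_{cr}$, so transferring the $\liminf$ bound on $M(r)$ to the pressure contribution at the same scale requires care — either a pigeonhole on dyadic scales, or replacing $P$ by a Bogovski-corrected local pressure, so that the contradiction can be closed purely in terms of $M(r_{k})$.
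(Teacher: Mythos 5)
Your proof is correct and takes a genuinely different, and in fact shorter, route than the paper's, exploiting a different piece of the monotonicity inequality. The paper's argument only keeps from Proposition~\ref{prop:dAdr} the weak consequence $A'(r)\ge \frac{3}{4r^{4}}\int_{B_r}|\nabla(|x|u)|^2\ge 0$ (using $Q+\text{\Large$\wp$}\ge 0$ merely as a sign condition), concludes that $A$ is monotone and hence convergent, telescopes the increments along the rescalings $u_k(x)=r_k u(r_k x)$ so that the blow-up limit $u_*$ is $(-1)$-homogeneous, invokes \v{S}ver\'ak's classification to get $u_*=0$, and closes the contradiction via the $\ep$-regularity lemmas of Appendix~\ref{sec:appendix} (which pin $\liminf M(r_k)\ge\delta>0$) together with the local energy inequality. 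You instead keep the full strength of $\liminf_{r\to 0^{+}}(Q+\text{\Large$\wp$})>0$: since $D(r)\ge Q(r)$ you get $A'(r)\ge c/r$ for small $r$, hence the logarithmic blow-down $A(r)\to-\infty$, which clashes directly with the scale-invariant polynomial bound $|A(r)|\lesssim M(r)+M(r)^{3/2}$ evaluated along a $\liminf$-selecting sequence $r_k\downarrow 0$. Your route avoids \v{S}ver\'ak, the blow-up, and the appendix entirely; what the paper's longer path buys is the explicit homogeneous blow-up limit that Theorem~\ref{TH-00} asserts.

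Two small points on your pressure step. The worry about an enlarged ball is easily dispensed with: use Corollary~\ref{corol:bogo-Scaled}, which is the Bogovski\v{i}-based estimate $\|P-[P]_R\|_{L^{5/3}(B_R)}\le CR(1+\sqrt{M(R)})\sqrt{M(R)}$ at the \emph{same} scale $R$, so no dyadic pigeonhole is needed (and a pigeonhole alone would not obviously control $M(cr_k)$ anyway). Also, to replace $P$ by $P-[P]_r$ in $\text{\Large$\wp$}(r)$ you should record that
\begin{equation*}
\int_{B_r}u\cdot\frac{x}{|x|}\,dx=\int_0^r\!\!\int_{B_s}\div u\,dx\,ds=0
\end{equation*}
by the divergence-free condition, so $[P]_r$ contributes nothing; combining this with H\"older with exponents $(5/3,5/2)$, the embedding $\mathcal W^{1,2}(B_1)\hookrightarrow L^{10/3}(B_1)$ applied to $v(y)=ru(ry)$, and Corollary~\ref{corol:bogo-Scaled} gives $|A(r)|\le C\bigl(M(r)+M(r)^{3/2}\bigr)$, which is all you need to close the contradiction.
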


\begin{proof} 

Under the conditions  $A(r)$ is nondecreasing, and hence bounded 
since $|A(r)|\lesssim M(r)$ and there is 
a sequence $r_k$ such that $\lim_{k\to \infty} M(r_k)<\infty$. 
Applying Proposition \ref{prop:dAdr} we see that 
\[
A'(r)\ge \frac1r \int_{B_r} \frac{3}{4r^3}\left| \nabla (|x|u) \right|^2\ge 0.
\]
Moreover from Lemmas \ref{lem:Lin} and \ref{lem:Linbbb}
it follows that $\lim_{k\to \infty} M(r_k)\ge\delta>0$.
Introduce $u_k(x)=r_ku(r_k x)$, then for $0<\alpha<\beta$ we have 
\begin{align*}
0\leftarrow A[u](\beta r_k)-A[u](\alpha r_k)
&=
A[u_k](\beta)-A[u_k](\alpha )\\
&\ge 
\int_\alpha^\beta \frac1t \int_{B_t} \frac{3}{4t^3}\left| \nabla (|x|u_k) \right|^2\ge 0.
\end{align*}
Choosing a suitable subsequence $k_m$ and applying a customary 
compactness argument we can show that 
$u_{k_m}\to u_*$ weakly in $\mathcal W^{1,2}_{loc}(\R^5)$, $u_*$ is a suitable 
weak solution such that 
\[
\int_\alpha^\beta \frac1t \int_{B_t} \frac{3}{4t^3}\left| \nabla (|x|u_*) \right|^2=0
\]
hence $u_*$ is homogeneous function of 
degree negative one. Applying the result from \cite{Sverak} we conclude 
that $u_*=0$. On the other hand the condition 
$\lim_{k\to \infty} M(r_k)\ge\delta>0$ 
translates to $u_*$ and we conclude that $\int_{B_1}|u_{k_m}|^2+|\nabla u_{k_m}|^2>\delta/2.$
Due to strong convergence $u_{k_m}\to u_*$ we see that 
$\int_{B_1}|u_{k_m}|^2<\delta/4$. Thus 
$\int_{B_1}|\nabla u_{k_m}|^2>\delta/4$.
Using the local energy inequality \eqref{mu} we arrive at a contradiction.

\end{proof}

This lemma shows that 
there are three possibilities if $x=0$ is a singular point:
\begin{itemize}
\item[(1)] $\liminf_{r\to 0^+} M(r)=\infty$, 
\item[(2)] $\liminf_{r\to0^+}\left[ Q(r)+\text{\Large$\wp$}(r)\right]\le 0$,
\item[(3)] $\liminf_{r\to 0^+} M(r)=\infty$, 
and 
$\liminf_{r\to0^+}\left[ Q(r)+\text{\Large$\wp$}(r)\right]\le 0$.
\end{itemize}

Note that $Q(r)\sim M(r)$. 
Thus for the case $\liminf_{r\to 0^+} M(r)<\infty$
we need to analyze the behavior of $M(r)+\text{\Large$\wp$}(r)$.
The rest of the paper is devoted to this analysis. 

In fact, we will see that the condition in the Theorem \ref{TH} 
implies that 
$\text{\Large$\wp$}(r)$ is small compared to 
$\left(M(r)\right)^{3/2}$.

%
%

\section{Classification of self-similar solutions of degree -1 for the stationary Euler equations in $\R^5$} \label{sec:Euler}

\begin{theorem}\label{thm:Heuler}
	Suppose $V\in \mathcal W^{1, 2}_{loc}(\R^5), P\in L^1_{loc}(\R^5)$ have the form 
	\begin{align*}
		V=\frac{v(\sigma)+f(\sigma)\sigma}{|x|}, \quad P=\frac{p(\sigma)}{|x|^2}, \quad \sigma=\frac x{|x|}, 
	\end{align*}
	solves the Euler system 
	\begin{equation*}
		(V\cdot \nabla)V+\nabla P=0, \quad \div V=0, 
	\end{equation*}
	where $f, p, v$ are some function on $\Sph^4$. 
	Then $f=|v|^2+2p=0$.
\end{theorem}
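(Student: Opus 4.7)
The plan is to convert the self-similar Euler system into a coupled system of PDEs on $\Sph^4$ for the triple $(v,f,p)$ (using the natural decomposition in which $v\cdot\sigma=0$), uncover two conservation laws along the streamlines of $v$, combine them into a single integral identity that forces the rescaled Bernoulli pressure to vanish, and conclude $f\equiv 0$ by one further integration by parts.

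Setting $W(\sigma):=v(\sigma)+f(\sigma)\sigma$ and $q(\sigma):=\tfrac{1}{2}(|v|^2+f^2)+p$, so that $\Pi:=\tfrac{|V|^2}{2}+P=q/r^2$, I would first derive three scalar identities on $\Sph^4$. From $\div V=0$, a direct spherical-coordinate computation (noting that $\div(f\sigma)=(N-1)f/r$ and that a tangential degree-$0$ vector field has ambient divergence $r^{-1}\div_{\Sph^4}v$) yields
\[
\div_{\Sph^4}v \;=\; -3f.
\]
Dotting the momentum equation with the position vector $x$ and using the identity $V\cdot\nabla(x\cdot V)=|V|^2-x\cdot\nabla P$ together with $x\cdot V=f$ and $x\cdot\nabla P=-2P$ produces the radial Euler equation
\[
v\cdot\nabla_{\Sph^4}f \;=\; |v|^2+f^2+2p \;=\; 2q.
\]
Dotting the momentum equation with $V$ instead gives $V\cdot\nabla\Pi=0$, which in terms of $q$ becomes the Bernoulli transport equation
\[
v\cdot\nabla_{\Sph^4}q \;=\; 2fq.
\]

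The key observation is that multiplying the radial equation by $f$ gives $v\cdot\nabla_{\Sph^4}(f^2/2)=2fq$, coinciding with the right-hand side of the Bernoulli equation; hence $H:=\tfrac{1}{2}f^2-q$ is constant along the flow of $v$, i.e.\ $v\cdot\nabla_{\Sph^4}H=0$. I would then compute $\int_{\Sph^4}H^2$ and show it vanishes. Multiplying the radial equation by $f^2$ and integrating by parts using $\div_{\Sph^4}v=-3f$ gives $\int f^4=4\int q^2$; multiplying the Bernoulli equation by $f$, integrating by parts, and substituting the radial equation back yields $\int f^2q=2\int q^2$. Together they collapse to
\[
\int_{\Sph^4}H^2 \;=\; \tfrac{1}{4}\int f^4-\int f^2q+\int q^2 \;=\; \int q^2-2\int q^2+\int q^2 \;=\; 0,
\]
so $H\equiv 0$ and $q=f^2/2$. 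Substituting this back into the radial equation reduces it to $v\cdot\nabla_{\Sph^4}f=f^2$; integrating once more and using $\div_{\Sph^4}v=-3f$ gives $3\int f^2=\int v\cdot\nabla_{\Sph^4}f=\int f^2$, forcing $f\equiv 0$. Then $q=f^2/2=0$ and $|v|^2+2p=2q=0$, which is the claim.

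The main obstacle I anticipate is regularity. The hypotheses $V\in\mathcal W^{1,2}_{loc}(\R^5)$ and $P\in L^1_{loc}(\R^5)$ translate to $v,f\in H^1(\Sph^4)$ (so $v,f\in L^4$), but only $p\in L^1(\Sph^4)$, whereas the identities $\int f^2q=2\int q^2$ and $\int f^4=4\int q^2$ require $q\in L^2(\Sph^4)$. One has to upgrade the regularity of $p$ first -- either by approximating $V$ and passing to the limit in the algebraic identity $H=0$, or by invoking the pressure equation $-\Delta P=\partial_i\partial_j(V^iV^j)$, which by homogeneity reduces to an elliptic problem on $\Sph^4$ whose right-hand side is controlled by $\|W\|_{H^1}$, admitting a short bootstrap. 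Modulo this issue, the entire argument is a clean manipulation of the three identities derived above.
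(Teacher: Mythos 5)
Your proposal is correct and is essentially the paper's own argument in different notation: your transported quantity $\tfrac12 f^2-q$ equals $-\tfrac12(|v|^2+2p)$, your identities $\int f^2q=2\int q^2$ and $\int f^4=4\int q^2$ are exactly the paper's two integral identities specialized to $N=5$, and the final step $3\int f^2=\int f^2$ is the same use of $\div_{\Sph^4}v=-3f$. The regularity upgrade you flag is also handled the same way in the paper, via the local pressure estimate (Proposition \ref{prop:PE}\ref{item:PE2}) giving $p\in L^2(\Sph^4)$ from $V\in L^4_{loc}$.
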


\begin{proof}
We use a slightly general set up to emphasise the importance of dimension five. 
In $\R^N$ the Euler system in spherical coordinates takes the following form 
	\begin{align}\label{eq:EulerSphere}
		\left\{
		\begin{array}{ccc}
			(N-2)f+\div v=0\\
			v\cdot \nabla f=H\\
			v\cdot \nabla H=2fH
		\end{array}
		\right.
	\end{align}
	where $H=|v|^2+f^2+2p$. These equations are derived in Appendix \ref{sec:app-Euler}. 
	Note that the embedding theorem on $\mathbb S^{N-1}$ \cite{Beckner} implies that 
	$v, f\in L^{\frac{2(N-1)}{(N-1)-2}}(\Sph^{N-1})$. If $N=5$, then $v, f \in L^2(\mathbb S^4)$, which in turn 
	imply that $V\in L^4_{loc}(\R^5)$. Hence, applying the local estimates for the pressure Proposition \ref{prop:PE},   	we conclude that $P\in L^2_{loc}(\R^5)$ and $p\in L^2(\Sph ^4)$ and $H\in L^2(\Sph^4)$.
	
	Multiplying the second equation in \eqref{eq:EulerSphere} by $H$ and integrating by parts gives 
	\begin{align*}
		\int_{\mathbb S^{N-1}}H^2=\int_{\mathbb S^{N-1}} v\cdot \nabla f H=
		-\int_{\mathbb S^{N-1}}(\div vH+v\cdot \nabla H)f\\
		=-\int_{\mathbb S^{N-1}}(-(N-2)fH+2fH)f\\
		=(N-4)\int_{\mathbb S^{N-1}}Hf^2.
	\end{align*}
	Splitting $H^2=Hf^2 +H(|v|^2+2p)$ and rearranging the integrals yields
	\begin{align}\label{eq:Split1}
		\int_{\mathbb S^{N-1}}H(|v|^2+2p)=(N-5) \int_{\mathbb S^{N-1}}Hf^2.
	\end{align}

	Next, we multiply the second equation in \eqref{eq:EulerSphere} by $f^2$ and integrate  by parts
	
	\begin{align*}
		\int_{\mathbb S^{N-1}}Hf ^2=\int_{\mathbb S^n} v\cdot \nabla f f^2=
		-\int_{\mathbb S^{N-1}}\div v\frac{f^3}3\\
		=\frac{N-2}3\int_{\mathbb S^{N-1}}f^4.
	\end{align*}

	Splitting $f^2 H=f^4+ f^2(|v|^2+2p)$ and rearranging the integrals gives the following integral identity 
	
	\begin{align}\label{eq:Split2}
		\int_{\mathbb S^{N-1}}f^2(|v| ^2+2p)
		=\left(\frac{N-2}3-1\right)\int_{\mathbb S^{N-1}}f^4\\
		=\frac{N-5}3\int_{\mathbb S^{N-1}}f^4.
	\end{align}
	
	Taking $N=5$ in \eqref{eq:Split1} and  \eqref{eq:Split2} implies 
	\[
	\int_{\mathbb S^4}H(|v| ^2+2p)=\int_{\mathbb S^4}f^2(|v| ^2+2p)=0.
	\]
	Hence, subtracting the first integral from the last yields 
	\[
	\int_{\mathbb S^4}(|v| ^2+2p)^2=0.
	\]
	
	Thus $H=f^2$, hence from the first equation in \eqref{eq:EulerSphere} we get 
	\begin{align}
		(N-2)\int_{\mathbb S^{N-1}}f^2=-\int_{\mathbb S^{N-1}}\div vf=
		\int_{\mathbb S^{N-1}}v\cdot \nabla f=\int_{\mathbb S^{N-1}}H= \int_{\mathbb S^{N-1}}f^2,
	\end{align}
	implying  $(N-3)\int_{\mathbb S^{N-1}}f^2=0$. For  $N=5$ the result follows. 
\end{proof}

%
%

\section{Pressure Estimates}\label{sec:energy}

\begin{proposition}\label{prop:bogo}
	Suppose $u\in \mathcal W^{1,2}(B)$ and $p\in L^1_{\text{loc}}(B)$ solves the equations
	\begin{equation}\label{distrNS}
		\div (u\otimes u) + \nabla p = \Delta u \ \text{ in the sense of distribution in } \ x\in B.
	\end{equation}
	Then there exists a generic constant $C>0$ such that 
	\begin{equation*}
		 \big\| p - [p] \big\|_{ L^{5/3}(B) } \le C \| u \|_{\mathcal W^{1,2}(B)} \left\{  1 + \| u \|_{\mathcal W^{1,2}(B)} \right\}. 
	\end{equation*}
\end{proposition}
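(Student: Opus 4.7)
The plan is to estimate $\|p - [p]\|_{L^{5/3}(B)}$ by duality against $L^{5/2}(B)$. Fix $\psi \in L^{5/2}(B)$ with $\|\psi\|_{L^{5/2}} \le 1$; since $p - [p]$ has vanishing mean, one may replace $\psi$ by $\psi - [\psi]$ without changing the pairing, so I may assume $\int_B \psi = 0$. The classical Bogovskii right-inverse of the divergence on the ball then produces a vector field $\Phi \in W_0^{1, 5/2}(B; \R^5)$ with $\div \Phi = \psi$ in $B$ and $\|\nabla \Phi\|_{L^{5/2}(B)} \le C\|\psi\|_{L^{5/2}(B)}$. Because $\Phi$ has vanishing trace, the mean of $p$ drops out and
\[
\int_B (p - [p])\,\psi = \int_B p\, \div \Phi.
\]

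Next, I would test the distributional identity $\nabla p = \Delta u - \div(u \otimes u)$ against $\Phi$. By density of $C_c^\infty(B; \R^5)$ in $W_0^{1, 5/2}(B; \R^5)$, together with the fact that the right-hand side extends continuously to this space thanks to $\nabla u \in L^2(B)$ and $u \otimes u \in L^{5/3}(B)$, this pairing is legitimate and produces
\[
\int_B p\, \div \Phi = \int_B (u \otimes u) : \nabla \Phi - \int_B \nabla u : \nabla \Phi.
\]

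I would then estimate both contributions. For the quadratic piece, H\"older with the conjugate pair $(5/3, 5/2)$ combined with the critical Sobolev embedding $\mathcal W^{1,2}(B) \hookrightarrow L^{10/3}(B)$ in $\R^5$ (arising because $2\cdot 5/(5-2) = 10/3$) yields
\[
\Big|\int_B (u \otimes u) : \nabla \Phi\Big| \le \|u\|_{L^{10/3}(B)}^2\,\|\nabla \Phi\|_{L^{5/2}(B)} \le C\,\|u\|_{\mathcal W^{1,2}(B)}^2\,\|\psi\|_{L^{5/2}(B)}.
\]
For the linear piece, since $B$ has finite measure and $5/2 > 2$, $\|\nabla \Phi\|_{L^2(B)} \le C\|\nabla \Phi\|_{L^{5/2}(B)}$, so H\"older with the pair $(2,2)$ gives a bound of order $C\,\|u\|_{\mathcal W^{1,2}(B)}\,\|\psi\|_{L^{5/2}(B)}$. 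Summing the two and taking the supremum over admissible $\psi$ delivers the claimed estimate.

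The main obstacle is the Bogovskii construction with the $L^{5/2}$ bound: this requires the domain to be star-shaped with respect to a ball (which $B$ satisfies) and a classical integral-representation argument to solve $\div \Phi = \psi$ under the zero-mean constraint with sharp control. Once this tool is available, dimension $N = 5$ enters only to make the pairing $u \otimes u \in L^{5/3}$ exactly dual to $\nabla \Phi \in L^{5/2}$, and the remaining estimates are routine applications of H\"older and Sobolev.
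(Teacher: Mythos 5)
Your proof is correct and follows essentially the same route as the paper: both rest on the Bogovski\v{i} right-inverse of the divergence on $B$ with the $L^{5/2}$ gradient bound, testing the distributional momentum equation with the resulting field in $\mathcal W_0^{1,5/2}(B)$, and estimating the quadratic and linear terms via H\"older together with the embedding $\mathcal W^{1,2}(B)\hookrightarrow L^{10/3}(B)$ in $\R^5$. The only cosmetic difference is that you run the duality characterization of the $L^{5/3}$ norm over arbitrary mean-zero $\psi\in L^{5/2}(B)$, whereas the paper feeds the single extremizing datum $\sgn(q)|q|^{2/3}$ minus its mean into Bogovski\v{i}'s operator, so that the tested equation yields $\|p-[p]\|_{L^{5/3}(B)}^{5/3}$ on the left-hand side directly.
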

\begin{proof}
	We define $q\vcentcolon= p - [p]$. Then $\frac{1}{|B|}\int_{B} q = 0$ and it holds that $(u,q)$ also solves the equations \eqref{distrNS} in the sense of distributions. By Bogovski\v{i}'s theorem (See Section \textrm{III}.3 of \cite{Galdi}), there exists a vector-valued function $\psi \in \mathcal W_0^{1,5/2}(B)$ such that 
	\begin{equation}\label{psi}
		\left\{ 
		\begin{aligned}
			&\div \psi = g \vcentcolon= \sgn(q) |q|^{\frac{2}{3}} - \frac{1}{|B|}\int_{B} \sgn(q) |q|^{\frac{2}{3}} && \text{ in } \ x \in B_R,\\
			& \frac{x}{|x|}\cdot \psi(x)  = 0 && \text{ on } \ x \in \d B_R,			
		\end{aligned}
		\right.
	\end{equation}
	where $\sgn(q)$ is the sign function defined by $\sgn(q)= \frac{q}{|q|}$ if $q\neq 0$ and $\sgn(q)=0$ if $q=0$. In addition, it is shown in Section \textrm{III}.3 of \cite{Galdi} that there exists a generic constant $C>0$ such that 
	\begin{align}\label{psiIneq}
		\left\| \psi \right\|_{W^{1,5/2}(B)} \le C\left\| g  \right\|_{L^{5/2}(B)} \le C \left\| q \right\|_{L^{5/3}(B)}^{2/3}.
	\end{align}
	Replacing $p$ by $q$ in the second equation of distributional equalities \eqref{distrNS} then using $\psi\in \mathcal W_0^{1,5/2}(B)$ as a test function, we get
	\begin{align}\label{psiWF}
		\int_{B} q \div \psi = \int_{B} \left\{ u\otimes u \vcentcolon \nabla \psi + \nabla u \vcentcolon \nabla \psi \right\}.
	\end{align} 
	By construction \eqref{psi}, the left hand side of the above equation is given by
	\begin{equation}
		\int_{B} q \div \psi = \int_{B}|q|^{\frac{5}{3}} = \left\| q \right\|_{L^{5/3}(B)}^{5/3}.
	\end{equation}
	Substituting the above into \eqref{psiWF}, applying Poincar\'e's inequality, Sobolev's embedding theorem and \eqref{psiIneq}, we obtain
	\begin{align*}
		&\left\| q \right\|_{L^{5/3}(B)}^{5/3} = \int_{B} \left\{u\otimes u \vcentcolon \nabla \psi + \nabla u \vcentcolon \nabla \psi \right\}\\
		\le& C \left\{ \left\| u \right\|_{L^{10/3}(B)}^2 + \left\| \nabla u \right\|_{L^{2}} \right\} \left\| \nabla \psi \right\|_{L^{5/2}(B)} \le C \left\| u \right\|_{W^{1,2}(B)} \left\{ 1 + \left\| u \right\|_{\mathcal W^{1,2}(B)} \right\}  \left\| q \right\|_{L^{5/3}(B)}^{2/3}.
	\end{align*}  
	Dividing both sides by $\| q \|_{L^{5/3}(B)}^{2/3}$ gives the desired result.
\end{proof}

\begin{corollary}\label{corol:bogo-Scaled}
	Fix a radius $R>0$. Suppose $u\in \mathcal W^{1,2}(B_R)$, $p\in L^1_{\text{loc}}(B_R)$ is a Leray-Hopf weak solution to the Navier-Stokes equations. Then there exists a generic constant $C>0$ such that
	\begin{equation*}
		 \big\| p - [p]_R \big\|_{L^{5/3}(B_R)} \le 
		  C R \left\{ 1 + \sqrt{M(R)} \right\}\sqrt{M(R)},
	\end{equation*}
	where $M(R)\vcentcolon= M[u](R)$.
\end{corollary}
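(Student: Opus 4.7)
The strategy is to reduce the statement to Proposition \ref{prop:bogo} by rescaling the problem to the unit ball $B_1$. Concretely, I would define the rescaled pair
\begin{equation*}
	u_R(y) \vcentcolon= R\, u(Ry), \qquad p_R(y) \vcentcolon= R^2 \, p(Ry), \qquad y\in B_1,
\end{equation*}
and first verify that the scaling of the stationary Navier–Stokes system is preserved, so that $(u_R, p_R)$ solves $\div(u_R\otimes u_R) + \nabla p_R = \Delta u_R$ in the distributional sense on $B_1$ (together with $\div u_R = 0$). This is the standard critical scaling for \eqref{eq:problem}, so $(u_R,p_R)$ meets the hypotheses of Proposition \ref{prop:bogo}.

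Next, I would carry out the change-of-variables bookkeeping that relates the intrinsic quantities on $B_1$ to the $R$-scaled quantities on $B_R$. A direct computation gives
\begin{equation*}
	\int_{B_1} |u_R|^2 \, dy = \frac{1}{R^3}\int_{B_R} |u|^2\, dx, \qquad \int_{B_1} |\nabla_y u_R|^2 \, dy = \frac{1}{R}\int_{B_R}|\nabla u|^2\, dx,
\end{equation*}
so that $\|u_R\|_{\mathcal W^{1,2}(B_1)}^2 = M(R)$. In the same way, since $p_R - [p_R] = R^2\bigl(p(R\cdot) - [p]_R\bigr)$ (using that averages scale as $[p_R] = R^2[p]_R$), one obtains the identity
\begin{equation*}
	\|p_R - [p_R]\|_{L^{5/3}(B_1)} = \frac{1}{R}\, \|p - [p]_R\|_{L^{5/3}(B_R)}.
\end{equation*}

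With these scaling relations in hand, applying Proposition \ref{prop:bogo} to $(u_R, p_R)$ yields
\begin{equation*}
	\|p_R - [p_R]\|_{L^{5/3}(B_1)} \le C\|u_R\|_{\mathcal W^{1,2}(B_1)}\bigl(1+\|u_R\|_{\mathcal W^{1,2}(B_1)}\bigr) = C\sqrt{M(R)}\bigl(1+\sqrt{M(R)}\bigr),
\end{equation*}
and multiplying both sides by $R$ gives exactly the claimed inequality. The only real obstacle here is keeping the dimensional exponents straight in the change of variables (in particular reconciling the $L^{5/3}$ rescaling with the $R^2$ in the definition of $p_R$); there is no new analytic content beyond Proposition \ref{prop:bogo}.
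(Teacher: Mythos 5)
Your proof is correct and follows exactly the same route as the paper: rescale $(u,p)$ to $(u^R,p^R)(y)=(Ru(Ry),R^2p(Ry))$ on $B_1$, apply Proposition \ref{prop:bogo}, and undo the scaling. The scaling identities you record, $\|u_R\|_{\mathcal W^{1,2}(B_1)}^2 = M(R)$ and $\|p_R-[p_R]\|_{L^{5/3}(B_1)} = R^{-1}\|p-[p]_R\|_{L^{5/3}(B_R)}$, are the ones the paper leaves implicit, and they check out (using $dx = R^5\,dy$ in $\R^5$).
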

\begin{proof}
	For $(u,p)$ in $x\in B_R$, we define $(u^R,p^R)(y)\vcentcolon= \left(R u(Ry), R^2 p(Ry)\right)$ for $y\in B$. Then $(u^R,p^R)$ solves \eqref{distrNS} in the sense of distribution in $y\in B$. Applying Proposition \ref{prop:bogo} on $(u^R,p^R)$ then rescaling the domain of integral from $B$ to $B_R$ yield the desired inequality.
\end{proof}

\begin{proposition}[Pressure Estimate]\label{prop:PE}
	Fix a radius $R>0$. Suppose $v\in \mathcal W_{\text{loc}}^{1,2}(B_{R})$ and $q\in L_{\text{loc}}^1(B_{R})$ solve the equations
	\begin{equation}\label{vq}
		\div\,v =0, \qquad \Delta q = - \div \div (v\otimes v) \quad \text{ for } \ x\in B_{R} \ \text{ in the sense of distribution.}
	\end{equation}
	Then there exists a constant $C>0$ independent of $R>0$, $v$ and $q$ such that
	\begin{enumerate}[label=\textnormal{(\roman*)},ref=\textnormal{(\roman*)}]
		\item\label{item:PE1} if $v\in \mathcal W^{1,2}(B_{R})$ and $q\in L^1(B_{R})$ then
		\begin{gather*}
			\left\|q - \left[q\right]_R \right\|_{L^{5/3}(B_{R/2})} + \|\nabla q\|_{L^{5/4}(B_{R/2})} \le C \|v\|_{\mathcal  W^{1,2}(B_R)}\|\nabla v\|_{L^2(B_{R})} + \frac{C}{R^2}\|q - \left[q\right]_R \|_{L^1(B_R)},
		\end{gather*} 
		\item \label{item:PE2} if $v \in \mathcal  W^{1,2}(B_{R})\cap L^4(B_R)$ and $q\in L^1(B_R)$ then
		\begin{gather*}
			\| q - [q]_R \|_{L^2(B_R)}  \le C \| v \|_{L^4(B_R)}  + \frac{C}{R^2}\|q - \left[q\right]_R \|_{L^1(B_R)}.
		\end{gather*}
	\end{enumerate}
\end{proposition}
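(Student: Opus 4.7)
The proof is a standard local pressure estimate: split $q$ into a Calder\'on--Zygmund piece built from $v\otimes v$ and a harmonic remainder. Two preliminary reductions clean up the picture. First, the equation $\Delta q = -\div\div(v\otimes v)$ is unchanged if we replace $v$ by $\bar v := v - [v]_R$, because $\div v = 0$ forces $\div\div(c\otimes v+v\otimes c+c\otimes c)=0$ for any constant vector $c$. Second, both sides of the stated inequalities are insensitive to adding a constant to $q$, so we may assume $[q]_R=0$. Fix a smooth cutoff $\eta\in\mathcal C_c^\infty(B_{3R/4})$ with $\eta\equiv 1$ on $B_{5R/8}$ and $|\nabla^k\eta|\le CR^{-k}$.

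Define $q_1$ on $\R^5$ by $q_1 := -\Delta^{-1}\div\div(\eta\,\bar v\otimes\bar v)$, which can be written as $R_iR_j(\eta\,\bar v_i\bar v_j)$ in terms of Riesz transforms. Since Calder\'on--Zygmund operators are bounded on every $L^p$ with $1<p<\infty$, and since in dimension five $\|\bar v\|_{L^{10/3}(B_R)}\le C\|\nabla v\|_{L^2(B_R)}$ by Sobolev--Poincar\'e,
\[
\|q_1\|_{L^{5/3}(\R^5)} \le C\|\eta\,\bar v\otimes\bar v\|_{L^{5/3}}\le C\|\nabla v\|_{L^2(B_R)}^2.
\]
For $\nabla q_1$, the identity $\div v=0$ gives $\div(\eta\,\bar v\otimes\bar v) = \eta(\bar v\cdot\nabla)\bar v + (\bar v\cdot\nabla\eta)\,\bar v$. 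H\"older with exponents $(10/3,2)$ bounds $\|\eta(\bar v\cdot\nabla)\bar v\|_{L^{5/4}}$ by $\|\bar v\|_{L^{10/3}}\|\nabla v\|_{L^2}$, while the commutator term is estimated by $R^{-1}\|\bar v\|_{L^{5/2}(B_R)}^2 \le CR^{-1}\cdot R\,\|\bar v\|_{L^{10/3}(B_R)}^2$ using the H\"older embedding $\|\bar v\|_{L^{5/2}(B_R)}\le CR^{1/2}\|\bar v\|_{L^{10/3}(B_R)}$. Applying Calder\'on--Zygmund to one derivative yields
\[
\|\nabla q_1\|_{L^{5/4}(\R^5)} \le C\|v\|_{\mathcal W^{1,2}(B_R)}\|\nabla v\|_{L^2(B_R)}.
\]

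The remainder $q_2 := q-q_1$ is harmonic on $B_{5R/8}$ since $\eta\equiv 1$ there, so interior harmonic estimates give $\|\nabla^k q_2\|_{L^\infty(B_{R/2})} \le CR^{-5-k}\|q_2\|_{L^1(B_{5R/8})}$. The $L^1$ norm of $q_2$ is split by the triangle inequality as $\|q_2\|_{L^1(B_{5R/8})} \le \|q-[q]_R\|_{L^1(B_R)} + \|q_1\|_{L^1(B_{5R/8})}$, and the $q_1$-contribution is controlled by $CR^2\|\nabla v\|_{L^2}^2$ via H\"older and the CZ bound already established. Converting the $L^\infty$ bounds to $L^{5/3}$ and $L^{5/4}$ on $B_{R/2}$ by H\"older costs a factor $R^3$ resp.\ $R^4$, and $R^{-5}\cdot R^3=R^{-2}=R^{-6}\cdot R^4$ gives exactly the $\tfrac{C}{R^2}\|q-[q]_R\|_{L^1(B_R)}$ tail. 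Combining the estimates for $q_1$ and $q_2$ proves (i).

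For (ii), the hypothesis $v\in L^4(B_R)$ makes $\bar v\otimes\bar v\in L^2(B_R)$ directly with norm $\le C\|v\|_{L^4(B_R)}^2$, so the same splitting with Calder\'on--Zygmund in $L^2$ controls $q_1$, and the harmonic part contributes the $R^{-2}$ tail exactly as in (i) (the stated inequality appears to be missing a square on the $L^4$ norm). The one place requiring care is the bookkeeping of the cutoff commutator: each factor of $\nabla\eta\sim R^{-1}$ must be paired with H\"older exponents whose associated volume factor $R^{5(1/p-1/q)}$ cancels it, so that the final inequality has the right scaling in $R$. Apart from that, the proof is a direct application of Calder\'on--Zygmund and mean-value estimates for harmonic functions.
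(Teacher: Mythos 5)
Your proof is correct, and it takes a genuinely different (and somewhat cleaner) route than the paper. The paper localizes the \emph{unknown}: it writes the equation for $\varphi\breve q$ with $\breve q = q - [q]_R$, inverts $-\Delta$, and obtains three pieces to estimate — a Riesz-transform term $q_1 = \mR_i\mR_j(\varphi\, v^i(v-[v]_R)^j)$, a term $q_2$ where the Newtonian kernel acts on cutoff commutators involving $v\otimes(v-[v]_R)$, and a term $q_3$ where it acts on cutoff commutators involving $\breve q$ itself; the latter two are handled by the observation that $\supp\nabla\varphi$ is separated from $B_{R/2}$, making the kernels bounded. You instead localize the \emph{data}: $q_1 := \mR_i\mR_j(\eta\,\bar v_i\bar v_j)$ with $\bar v = v-[v]_R$, and then $q - q_1$ is exactly harmonic on the region where $\eta\equiv 1$, so the remainder is handled in one stroke by mean-value (interior derivative) estimates for harmonic functions rather than by two separate kernel arguments. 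Your preliminary reduction $v\mapsto\bar v$ (justified by $\div\div(c\otimes v + v\otimes c + c\otimes c)=0$ when $\div v=0$) also buys a slightly sharper constant, namely $\|\nabla v\|_{L^2}^2$ in place of $\|v\|_{\mathcal W^{1,2}}\|\nabla v\|_{L^2}$, though both imply the stated inequality. All the scaling bookkeeping — the $R^{1/2}$ gain from $L^{10/3}\to L^{5/2}$, the $R^{-5}$ from the mean-value estimate, the $R^3$ (resp.\ $R^4$) from converting $L^\infty$ to $L^{5/3}$ (resp.\ $L^{5/4}$), and the $R^2$ from $\|q_1\|_{L^1}\lesssim R^2\|q_1\|_{L^{5/3}}$ — checks out and reproduces the $C R^{-2}\|q-[q]_R\|_{L^1(B_R)}$ tail. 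You are also right that part (ii) as printed contains typographical slips: the right-hand side should read $\|v\|_{L^4(B_R)}^2$ (since $\|\mR_i\mR_j(v^iv^j)\|_{L^2}\lesssim \|v\otimes v\|_{L^2}=\|v\|_{L^4}^2$), and the left-hand side is controlled only on $B_{R/2}$, consistent with the paper's own proof, which "repeats the same argument" from the cutoff decomposition.
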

\begin{proof}
	
	Let $\varphi\in \mC_{c}^{\infty}(B_R)$ be a test function such that
	\begin{align*}
		0\le \varphi \le 1, \qquad \varphi =1 \ \text{ in } \ B_{R/2}, \qquad \varphi =0 \ \text{ in } \ B_R \backslash B_{3R/4}, \qquad |\nabla^k \varphi| \le \frac{C}{R^{|k|}} \ \text{ for } \ k\in\N^5, 
	\end{align*}
	where $C>0$ is some generic constant. Denote $\breve{q}(x)\vcentcolon= q(x) - [q]_R$. By equations \eqref{vq}, we have that
	\begin{align}\label{Deltaq}
		-\Delta \left(\varphi \breve{q}\right) =& \div \div \big(\varphi v \otimes (v-[v]_R) \big) - \div\big\{ (v-[v]_R) v\cdot \nabla \varphi + v (v-[v]_R)\cdot \nabla \varphi \big\}\\ &+ (v-[v]_R)\otimes v \vcentcolon \nabla^2 \varphi - \div\left(2\breve{q}\nabla \varphi\right) + \breve{q} \Delta\varphi\nonumber 
	\end{align}
	holds in $\R^5$ in the sense of distribution. Applying the operator $(-\Delta)^{-1}$ on both sides of the above equation, we have
	\begin{align}\label{phiq}
		\breve{q}(x)=\varphi(x) \breve{q}(x) 
		= q_1(x) +q_2(x)+q_3(x), 
	\end{align} 
	for $x\in B_{R/2}$ where 
	\begin{align*} 
		q_1\vcentcolon=&\sum_{i,j=1} \mR_i \mR_j \big( \varphi v^i (v-[v]_R)^j \big),\\
		q_2(x)\vcentcolon=& \!\int_{\R^5}\! \sum_{i=1}^5\frac{3(x_i-y_i)}{8\pi^2|y-x|^5}\big\{ (v^i-[v]_R^i) v\cdot \nabla \varphi + v^i (v-[v]_R)\cdot \nabla \varphi \big\} d y\\
		&+ \int_{\R^5}\! \frac{\nabla^2 \varphi}{8\pi^2|y-x|^3}\vcentcolon v\otimes(v-[v]_R)\, d y,\\
		q_3(x)\vcentcolon=& \int_{\R^5} \frac{3(x-y)}{4\pi^2|y-x|^5} \cdot \left(\breve{q} \nabla \varphi\right)(y)\, d y  + \int_{\R^5} \frac{(\breve{q} \Delta \varphi)(y)}{8\pi^2 |y-x|^3} dy.
	\end{align*}
	Here, $\mR=(\mR_1,\dotsc \mR_5)^\top$ is the Riesz transform and $\|\mR\|_{L^s\to L^s}\le C(s)<\infty$ for $1<s<\infty$. The $L^{5/3}(B_{R/2})$ norm of $q_1$ is estimated using Poincar\'e-Sobolev inequality as follows
	\begin{align*}
		\|q_1\|_{L^{5/3}(B_{R/2})} \le& C \big\| \varphi v\otimes (v-[v]_R) \big\|_{L^{5/3}(\R^5)} \le C \|v\|_{L^{10/3}(B_R)}  \|v-[v]_R\|_{L^{10/3}(B_R)}\\
		\le& C \|v\|_{W^{1,2}(B_R)}\|v-[v]_R\|_{W^{1,2}(B_R)} \le C \|v\|_{W^{1,2}(B_R)}\|\nabla v\|_{L^{2}(B_R)}.
	\end{align*}
	Since $\supp(\nabla \phi) \subseteq B_R\backslash B_{3R/4}$, it follows that if $x\in B_{R/2}$ and $y\in \supp(\nabla \phi)$, then $|x-y| \ge R/4$. Using this, we obtain the inequality that for $x\in B_{R/2}$, 
	\begin{align*}
		|q_2(x)| \le& C\int_{B_{R}\backslash B_{3R/4}}\!\!\Big(\frac{1}{R|x-y|^4} +\frac{1}{R^2|x-y|^3}\Big) |v|\cdot|v-[v]_R|\, d y\\ 
		\le& \frac{C}{R^3} \big\| |v|\cdot|v-[v]_R| \big\|_{L^{5/3}(B_R)} \le \frac{C}{R^3} \|v\|_{W^{1,2}(B_R)}\|\nabla v\|_{L^2(B_R)}.
	\end{align*} 
	Therefore, it follows that
	\begin{align*}
		\|q_2\|_{L^{5/3}(B_{R/2})}\le C \|v\|_{
		\mathcal W^{1,2}(B_R)}\|\nabla v\|_{L^2(B_R)}.
	\end{align*}
	For the term $q_3$, it follows similarly that if $x\in B_{R/2}$ then
	\begin{align*}
		|q_3(x)| \le C \int_{B_R\backslash B_{3R/4}} \Big( \frac{1}{R|x-y|^4} + \frac{1}{R^2|x-y|^3} \Big) |\breve{q}(y)| dy \le \frac{C}{R^5} \left\|q-[q]_R\right\|_{L^1(B_R\backslash B_{R/2})},
	\end{align*}
	which gives the following estimate
	\begin{equation*}
		\|q_3\|_{L^{5/3}(B_{R/2})} \le \frac{C}{R^2}\left\|q - \left[q\right]_R \right\|_{L^1(B_R\backslash B_{R/2})}.
	\end{equation*}
	Combining the estimates of $\{\|q_i\|_{L^{5/3}(B_{R/2})}\}_{i=1}^3$, we have
	\begin{equation*}
		\|\breve{q}\|_{L^{5/3}(B_{R/2})} \le C \|v\|_{\mathcal  W^{1,2}(B_R)}\|\nabla v\|_{L^2(B_R)} + \frac{C}{R^2}\left\|q-[q]_R\right\|_{L^1(B_R\backslash B_{R/2})}.
	\end{equation*} 
	This proves the $L^{5/3}$-estimate of $q-[q]_R$.
	
	Next, we derive the estimate for $\nabla q$. Taking derivative $\d_{x_k}$ on the equation \eqref{Deltaq} for $k=1,\dotsc,5$, then applying the operator $(-\Delta)^{-1}$, we get
	\begin{align*}
		\d_{x_k} q(x) = \d_{x_k} \breve{q} (x) = \tilde{q}_1^k(x) + \tilde{q}_2^k(x) + \tilde{q}_3^k(x),
	\end{align*}
	for $x\in B_{R/2}$ where
	\begin{align*}
		\tilde{q}_1^k \vcentcolon=& \sum_{i,j=1}^5 \mR_i\mR_j \big(\varphi (v-[v]_R)^j \nabla v^i +  \varphi v^i \nabla v^j  \big),\\
		\tilde{q}_2^k \vcentcolon=& \int_{\R^5}\sum_{i,j=1}^5 \frac{3(x_i-y_i)}{8\pi^2|y-x|^5}  \d_{y_k}\big\{ (v-[v]_R)^i v^j + v^i (v-[v]_R)^j \big\} \d_{y_j}\varphi\, dy \\
		&+ \int_{\R^5}\sum_{i,j=1}^5\frac{1}{8\pi^2|y-x|^3}\d_{y_k}\big\{ (v-[v]_R)^i v^j \big\} \d_{y_i} \d_{y_j} \varphi\, dy,\\ 
		\tilde{q}_3^k \vcentcolon=& \int_{\R^5} \sum_{i=1}^5 \frac{3 \left(\breve{q} \d_{y_i} \varphi\right)(y)}{4\pi^2|y-x|^5} \Big\{ \delta_{ik} - 5 \frac{(y_i-x_i)(y_k-x_k)}{|y-x|^2} \Big\}\, dy - \int_{\R^5} \frac{\left(\breve{q} \d_{y_k} \Delta \varphi\right)(y)}{8\pi^2|y-x|^3}\, dy\\
		&+ \int_{\R^5}\sum_{i=1}^5\frac{3(y_i-x_i)}{4\pi^2|y-x|^5}\left(\breve{q}\d_{y_k}\d_{y_i}\varphi\right)(y) \, dy + \int_{\R^5}\frac{3(y_k-x_k)}{8\pi^2 |y-x|^5}\left(\breve{q}\Delta\varphi\right)(y)\, dy. 
	\end{align*}
	By the interpolation inequality, $L^p$-boundedness of Riesz's operator and Poincar\'e-Sobolev inequality,
	\begin{align*}
		&\|\tilde{q}_1^k\|_{L^{5/4}(B_{R/2})}\\ \le& C \big\{ \| v-[v]_R \|_{L^{10/3}(B_R)} \|\nabla v\|_{L^2(B_R)} + \| v \|_{L^{10/3}(B_R)} \|\nabla v\|_{L^2(B_R)} \big\}\le C\|v\|_{\mathcal W^{1,2}(B_R)}\|\nabla v\|_{L^2(B_R)}.
	\end{align*} 
	The estimates of $\tilde{q}_2^k$ and $\tilde{q}_3^k$ are obtained using the same argument for the terms $q_2$ and $q_3$ defined above. That is, by the fact that $\supp(\nabla \varphi) \subseteq B_{R}\backslash B_{3R/4}$, the singular integral kernels in $\tilde{q}_2^k$ and $\tilde{q}_3^k$ are bounded for $x\in B_{R/2}$. Thus, it follows that 
	\begin{align*}
		\| \tilde{q}_2^k \|_{L^{5/4}(B_{R/2})} \le C \|v\|_{\mathcal  W^{1,2}(B_R)}\|\nabla v\|_{L^2(B_R)}, \qquad \|\tilde{q}_3^k\|_{L^{5/4}(B_{R/2})} \le \frac{C}{R^2} \left\|q-[q]_R\right\|_{L^1(B_R)}. 
	\end{align*} 
	This shows the $L^{5/4}$-estimate for $\nabla p$ hence completes the proof of \ref{item:PE1}. Finally if $v\in L^4$ then taking $L^2(B_{R/2})$-norm on both sides of equation \eqref{phiq} then repeating the same argument as before, we also obtain \ref{item:PE2}.
\end{proof}

\begin{proposition}[Homogeneity of Pressure]\label{prop:HP}
	Suppose $h\in \mH(1)$ and $p\in L^1(B)$ solve the equations
	\begin{align*}
		\div h =0 \quad \text{ and } \quad (h\cdot \nabla) h + \nabla p = 0 \quad \text{for } \ x\in B \ \text{ in the sense of distribution.}
	\end{align*}
	Then there exists a constant $p_0\in\R$ and $\xi\in \mathcal W^{1,5/4}(\Sph^4)$ such that 
	\begin{align*}
		p = \frac{1}{|x|^2} \xi\big(\frac{x}{|x|}\big) + p_0 \qquad \text{for } \ x\in B \ \text{ a.e.}
	\end{align*}
\end{proposition}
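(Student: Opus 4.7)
The strategy is to exploit the homogeneity of $h$ to show that $p$ itself is homogeneous of degree $-2$ up to an additive constant. Since $h(x) = |x|^{-1}\zeta(x/|x|)$, a direct computation gives $h(\lambda x) = \lambda^{-1}h(x)$ and $(\nabla h)(\lambda x) = \lambda^{-2}(\nabla h)(x)$, so the convective term $F := (h\cdot \nabla) h$ is homogeneous of degree $-3$ almost everywhere. Moreover, by the Sobolev embedding $W^{1,2}(B)\hookrightarrow L^{10/3}(B)$ valid in $\R^5$, H\"older's inequality gives $F\in L^{5/4}(B)$, and the distributional identity $\nabla p = -F$ then upgrades $p$ from $L^1(B)$ to $W^{1,5/4}(B)$.

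For $\lambda\in (0,1]$ define the rescaling $p_\lambda(x) := \lambda^2 p(\lambda x)$, which lies in $L^1(B)$. A change of variables in the distributional pairing, combined with the homogeneity $F(\lambda x) = \lambda^{-3}F(x)$, shows that
\begin{equation*}
\int_B p_\lambda\,\nabla\varphi\, dx = \int_B p\,\nabla\varphi\, dx \qquad \text{for every } \varphi \in \mC_c^\infty(B),
\end{equation*}
that is, $\nabla(p_\lambda - p) = 0$ in $\mathcal{D}'(B)$. Since $B$ is connected, this yields $p_\lambda - p = C(\lambda)$ a.e., for some constant $C(\lambda)\in\R$. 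Comparing the identity $\lambda^2 p(\lambda x) = p(x)+C(\lambda)$ with its iterate at scale $\lambda\mu$ produces the cocycle relation $C(\lambda\mu) = C(\mu) + \mu^2 C(\lambda)$, and symmetry in $\lambda\leftrightarrow\mu$ forces $(1-\lambda^2)C(\mu) = (1-\mu^2)C(\lambda)$. Hence there exists $p_0\in\R$ such that $C(\lambda) = p_0(\lambda^2-1)$, and the function $q := p - p_0$ satisfies $\lambda^2 q(\lambda x) = q(x)$ a.e.\ for every $\lambda\in (0,1]$. This is exactly the condition that $q$ is homogeneous of degree $-2$, so we may write $q(x) = |x|^{-2}\xi(x/|x|)$ with $\xi(\sigma) := q(\sigma)$ defined a.e.\ on $\Sph^4$.

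It remains to verify $\xi\in W^{1,5/4}(\Sph^4)$. Substituting the spherical ansatz for $q$ and differentiating gives $|\nabla q(x)|\lesssim |x|^{-3}\bigl(|\xi(\sigma)| + |\nabla_{\Sph^4}\xi(\sigma)|\bigr)$ with $\sigma = x/|x|$, so polar coordinates yield
\begin{equation*}
\int_B |\nabla q|^{5/4}\, dx \simeq \Big(\int_0^1 r^{1/4}\, dr\Big)\int_{\Sph^4}\bigl(|\xi| + |\nabla_{\Sph^4}\xi|\bigr)^{5/4}\, d\sigma ,
\end{equation*}
and the radial factor is finite. Since $\nabla q = \nabla p\in L^{5/4}(B)$ by the first paragraph, the tangential Sobolev norm on $\Sph^4$ is finite, giving $\xi\in W^{1,5/4}(\Sph^4)$ as required.

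The main obstacle lies in rigorously justifying the distributional identity $\nabla p_\lambda = \nabla p$: although the formal computation is immediate, one must carry out the change of variables inside the weak pairing while keeping test functions supported strictly inside $B$ at every step, using the $L^{5/4}$-regularity of $F$ on concentric annuli to pass to the limit. Once the homogeneity of $p-p_0$ is established, the functional equation for $C(\lambda)$ and the spherical Sobolev calculation are routine.
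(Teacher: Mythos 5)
Your proposal is correct and takes a genuinely different route from the paper. The paper's proof fixes a base point $\sigma_0 \in \Sph^4$, integrates $\nabla p$ along spherical geodesics at radius $|x|$ to obtain the decomposition $p(x) = D(|x|) + \xi(\sigma)/|x|^2$ with $D(r):=p(r\sigma_0)$, and then compares the radial derivative coming from this ansatz with the radial derivative coming from the explicit formula $\nabla p = -(h\cdot\nabla)h$ to force $D(r) = C_1/r^2 + C_2$. You instead work purely distributionally: the rescaled pressure $p_\lambda(x) := \lambda^2 p(\lambda x)$ has the same distributional gradient as $p$ because the forcing $F=(h\cdot\nabla)h$ is homogeneous of degree $-3$, so $p_\lambda - p$ is a constant $C(\lambda)$; iterating the rescaling gives the cocycle $C(\lambda\mu) = C(\mu) + \mu^2 C(\lambda)$, whose only solutions are $C(\lambda) = p_0(\lambda^2-1)$, whence $p-p_0$ is exactly homogeneous of degree $-2$. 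Your route buys two things: it avoids the path-construction on the sphere (and the path-independence/connectivity bookkeeping it entails), and it never invokes the paper's pressure estimate Proposition~\ref{prop:PE} --- you get $\nabla p \in L^{5/4}(B)$ directly from $\nabla p = -(h\cdot\nabla)h$ together with H\"older and the Sobolev embedding $W^{1,2}(B)\hookrightarrow L^{10/3}(B)$ in $\R^5$. The change-of-variables step you flag as a potential obstacle is in fact unproblematic: for $\varphi\in\mC_c^\infty(B)$ and $\lambda\le 1$ the pulled-back test function $\psi_\lambda(y)=\varphi(y/\lambda)$ lies in $\mC_c^\infty(B_\lambda)\subset\mC_c^\infty(B)$, and $F\in L^{5/4}(B)\subset L^1(B)$ makes every integral converge, so the identity $\int p_\lambda\nabla\varphi = \int p\nabla\varphi$ follows from a single substitution, the distributional equation, and the $-3$-homogeneity of $F$. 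One minor point worth a sentence if you write this up formally: each identity $\lambda^2 q(\lambda x)=q(x)$ holds off a $\lambda$-dependent null set, so producing a single measurable representative that is exactly homogeneous requires the usual Fubini-plus-countable-dense-$\lambda$ argument before you can define $\xi(\sigma) := q(r\sigma)r^2$; this is routine and both proofs need it.
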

\begin{proof}
	First, under the assumption of the proposition, it holds that
	\begin{align*}
		-\Delta p = \div \div (h\otimes h) \qquad \text{for } \ x \in B \ \text{ in the sense of distributions.}
	\end{align*}
	By Proposition \ref{prop:PE}, $p\in L^{5/3}(B_{1/2})$ and $\nabla p \in L^{5/4}(B_{1/2})$. 
	We denote $\sigma_i\equiv \frac{x_i}{|x|}$ for $i=1,\dotsc,5$. Let $\zeta\in \mathcal W^{1,2}(\Sph^4)$ be such that $h(x)=\frac{1}{|x|}\zeta(\sigma)$. Then it can be verified that for $x\in B$ a.e.
	\begin{equation}\label{nablah}
		\d_{j} h^i = \frac{1}{|x|^2}\big\{ \big(\nabla_{\Sph^4} \zeta^i\big)^{j} - \sigma_j\zeta^i \big\}, 
	\end{equation} 
	where $\nabla_{\Sph^4}$ is the derivative on the sphere given by
	\begin{equation*}
		\big(\nabla_{\Sph^4} f\big)^{j} = \sum_{k=1}^5 |x|\Big\{ \delta_{jk} - \frac{x_j x_k}{|x|^2} \Big\} \frac{\d f}{\d x_k},
	\end{equation*}
	for continuously differentiable scalar functions $f(x) \vcentcolon \R^5 \to \R $. By \eqref{nablah}, we have that the following equation holds for $x\in B_{1/2}$ a.e.
	\begin{equation} \label{nablap}
		\nabla p = (h\cdot \nabla) h^i 
		= \frac{\omega^i(\sigma)}{|x|^3} \qquad \text{where } \ \omega^i(\sigma) \vcentcolon= \sum_{j=1}^5 \big(\nabla_{\Sph^4} \zeta^i \big)_{j} \zeta^j - (\sigma \cdot \zeta) \zeta^i.
	\end{equation}
	Now fix $\sigma_0 \in \Sph^4$. For $x\in B_{R/2}$, we set $\sigma = \frac{x}{|x|} \in \Sph^4$. Then there exists a mapping $\gamma(t)\vcentcolon [0,1]\to \Sph^4$ such that $\gamma \in \mC^{\infty}([0,1])$, $\gamma(0)=\sigma_0$ and $\gamma(1)=\sigma$. Note that $\dot{\gamma}\cdot \gamma = 0$. Taking inner-product of $|x|\dot{\gamma}(t)$ and equation \eqref{nablap} then integrating in $t\in [0,1]$, we get
	\begin{align*}
		p(x) - p(|x|\sigma_0) = \frac{1}{|x|^2}\int_0^1  \dot{\gamma} \cdot \omega\big(\gamma(t)\big)\, dt 
	\end{align*} 
	
	From here we can write 
	\[
	p(x)=D(|x|)+\frac{\xi(\sigma)}{|x|^2}
	\]
	Since $p$ has weak derivatives then it follows that $D$ is differentiable. Consequently, 
	\[
	\partial _r p=D'(r)-2\frac{\xi(\sigma)}{r^3}
	\]
	From \eqref{nablap} we have that the radial derivative of $p$ is 
	\[
	\partial_r p=\frac{\omega(\sigma)\cdot \sigma}{r^3}. 
	\]
	This yields
	\[
	r^3 D'(r)=2\xi(\sigma)+\omega(\sigma)\cdot \sigma,
	\]
	forcing both sides to be constant. Solving this equation we get $D(r)=\frac{C_1}{r^2}+C_2$, 
	where $C_1, C_2$ are constants. Summarizing, we see that 
	\[
	p(x)=C_2+\frac{\xi(\sigma)+C_1}{|x|^2}.
	\]
\end{proof}

%
%
\section{Main proposition}\label{sec:technical}
Before proving our main propositions, we introduce the following function space.
\begin{definition}
	Fix $R>0$. $\mH(R)$ is the subspace of $\mathcal W^{1,2}(B_R)$ defined by
	\begin{equation*}
		\mH(R) \vcentcolon= \Big\{ h\in \mathcal  W^{1,2}(B_R) \, \Big\vert \, \exists \zeta \in \mathcal W^{1,2}(\Sph^{N-1}): \ h(x)= \frac{1}{|x|} \zeta\big(\frac{x}{|x|}\big) \ \text{ for } \ x\in B_R \ \text{ a.e.}  \Big\}.
	\end{equation*}
\end{definition}
It can be verified that $\mH(R)$ is a closed linear subspace of $\mathcal W^{1,2}(B_R)$ with the scaled Sobolev norm
\begin{equation*}
	\|u\|_{\mathcal{W}^{1,2}_R}\vcentcolon=\bigg(\int_{B_R} \frac{1}{R^{N-2}}|u|^2 + \frac{1}{R^{N-4}}|\nabla u|^2\bigg)^{1/2}.
\end{equation*}
For $h\in \mH(R)$, let $\zeta\in \mathcal  W^{1,2}(\Sph^{N-1})$ be such that $h(x)=\frac{1}{|x|}\zeta\big(\frac{x}{|x|}\big)$ for a.e. $x\in B_R$. Then with few lines of calculation, one verifies that for $N\ge 5$
\begin{equation}\label{W12}
	\|h\|_{L^2(B_R)}^2 = \frac{R^{N-2}}{N-2} \|\zeta\|_{L^2(\Sph^{N-1})}^2, \qquad  \|\nabla h\|_{L^{2}(B_R)}^2= \frac{R^{N-4}}{N-4} \|\zeta\|_{\mathcal  W^{1,2}(\Sph^{N-1})}^2.
\end{equation}
Moreover, $\mathcal W^{1,2}(B_R)$ endowed with the inner-product
\begin{equation*}
	\langle u,v\rangle_{\mathcal{W}^{1,2}_R} \vcentcolon= \int_{ B_R}\frac{1}{R^{N-2}}u\cdot v+\frac{1}{R^{N-4}}\nabla u\vcentcolon\nabla v
\end{equation*}
is also a Hilbert space. It follows by Hilbert projection theorem that $\mathcal  W^{1,2}(B_R) = \mH(R) \oplus \mH(R)^{\perp}$. Using this, we define $\mP_R [u] \!\in\! \mH(R)$ for $u\!\in\! \mathcal W^{1,2}(B_R)$ to be the unique function such that
\begin{equation}
	\big\|\mP_R [u] - h \big\|_{\mathcal  W^{1,2}_R} = \min\limits_{h\in \mH(R)} \| u-h \|_{W^{1,2}_R}.
\end{equation}

\begin{proposition}\label{prop:cubicEst}
	For fixed constants $\delta_{1}, \delta_{2}>0$, there exists $\ep>0$ such that if $(u,p)$ is a Leray-Hopf solution to the Navier-Stokes equations satisfying
	\begin{equation}\label{h-A}
		\frac{1}{R^3} \int_{B_R} \big|u- \mP_R[u] \big|^2 + \frac{1}{R} \int_{ B_R} \big|\nabla u - \nabla \mP_R[u] \big|^2 \le \ep M[u](R)
	\end{equation} 
	for some $R>0$, then 
	\begin{equation}
		\bigg|\frac{1}{R^2}\int_{B_{R/2}} \big( |u|^2 + 2 p \big) \, u \cdot \frac{x}{|x|} \bigg|  \le \delta_{1} + \delta_{2} \big(M[u](R)\big)^{\frac{3}{2}}. 
	\end{equation}
\end{proposition}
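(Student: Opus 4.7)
The plan is to argue by contradiction and blow-up, reducing the problem to the rigidity of Theorem \ref{thm:Heuler}. Suppose the conclusion fails: there exist $\delta_1,\delta_2>0$, radii $R_n>0$, Leray--Hopf solutions $(u_n,p_n)$, and $\epsilon_n\downarrow 0$ such that the hypothesis \eqref{h-A} holds with $\epsilon_n$ in place of $\epsilon$ while the cubic-flux bound is violated. I would rescale to the unit ball by setting $v_n(y):=R_n u_n(R_n y)$, $q_n(y):=R_n^2 p_n(R_n y)$: then $(v_n,q_n)$ is a stationary Leray--Hopf solution on $B_1$ with $m_n:=\|v_n\|_{W^{1,2}(B_1)}^2=M[u_n](R_n)$, and
\[
\|v_n-\mP_1[v_n]\|_{W^{1,2}(B_1)}^2\le\epsilon_n m_n,\qquad \Big|\int_{B_{1/2}}(|v_n|^2+2q_n)\,v_n\cdot\tfrac{x}{|x|}\,dx\Big|>\delta_1+\delta_2 m_n^{3/2}.
\]
After normalizing $[q_n]_1=0$, Proposition \ref{prop:bogo} yields $\|q_n\|_{L^{5/3}(B_1)}\le C\sqrt{m_n}(1+\sqrt{m_n})$. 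A useful identity throughout is $\int_{B_r}v\cdot\hat x\,dx=0$ for any divergence-free $v$ (apply the divergence theorem to the field $|x|v$), which shows that additive constants in the pressure do not affect the cubic flux.

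Next, I would split according to the behavior of $m_n$. \emph{Case A: $\{m_n\}$ bounded.} Extracting a subsequence, $v_n\rightharpoonup v_*$ weakly in $W^{1,2}(B_1)$. Since $\|v_n-\mP_1[v_n]\|_{W^{1,2}}^2\le\epsilon_n m_n\to 0$ and $\mH(1)$ is closed in $W^{1,2}(B_1)$, the limit $v_*$ lies in $\mH(1)$; Rellich compactness upgrades to strong convergence in $L^p(B_1)$ for $p<10/3$, so the weak limit of $(v_n,q_n)$ solves the stationary Navier--Stokes system and \v{S}ver\'ak's classification forces $v_*\equiv 0$. H\"older's inequality, the $L^{5/3}$-bound on $q_n$, and the zero-mean identity then give $\int_{B_{1/2}}(|v_n|^2+2q_n)v_n\cdot\hat x\,dx\to 0$, contradicting the lower bound $\delta_1>0$.

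\emph{Case B: $m_n\to\infty$ along a subsequence.} I would renormalize $V_n:=v_n/\sqrt{m_n}$, $Q_n:=q_n/m_n$, obtaining $\|V_n\|_{W^{1,2}(B_1)}=1$, $\|Q_n\|_{L^{5/3}(B_1)}\le C$, and the vanishing-viscosity system
\[
(V_n\cdot\nabla)V_n+\nabla Q_n=m_n^{-1/2}\Delta V_n,\qquad \div V_n=0.
\]
The rescaled projection hypothesis becomes $\|V_n-\mP_1[V_n]\|_{W^{1,2}}^2\le\epsilon_n\to 0$. Along a subsequence, $V_n\to V_*$ strongly in $L^p$ for $p<10/3$ and $Q_n\rightharpoonup Q_*$ weakly in $L^{5/3}(B_1)$; closure of $\mH(1)$ places $V_*\in\mH(1)$. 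Passing to the distributional limit of the rescaled equation (the right-hand side vanishes as $m_n^{-1/2}\to 0$) shows that $(V_*,Q_*)$ solves the stationary Euler equations on $B_1$. Proposition \ref{prop:HP} then identifies $Q_*=\xi(\sigma)/|x|^2+p_0$ with $\xi\in W^{1,5/4}(\Sph^4)$ and $p_0\in\R$, and Theorem \ref{thm:Heuler} applied to $(V_*,Q_*-p_0)$ extended by homogeneity to $\R^5\setminus\{0\}$ forces $|V_*|^2+2(Q_*-p_0)\equiv 0$, i.e.\ $|V_*|^2+2Q_*\equiv 2p_0$. The divergence-free identity now gives
\[
\int_{B_{1/2}}(|V_*|^2+2Q_*)\,V_*\cdot\hat x\,dx=2p_0\int_{B_{1/2}}V_*\cdot\hat x\,dx=0,
\]
and weak-strong convergence lifts this to $\int_{B_{1/2}}(|V_n|^2+2Q_n)V_n\cdot\hat x\,dx\to 0$. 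However, the failure assumption, rescaled by $m_n^{3/2}$, reads $|\int_{B_{1/2}}(|V_n|^2+2Q_n)V_n\cdot\hat x|>\delta_1 m_n^{-3/2}+\delta_2\ge\delta_2$ for $n$ large, a contradiction.

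The hardest step will be Case B, where three ingredients must fit together cleanly: strong $L^{5/3}$-compactness of $V_n\otimes V_n$ (via Rellich and the Sobolev embedding $W^{1,2}(B_1)\hookrightarrow L^{10/3}(B_1)$) needed for the distributional Euler limit; the homogeneous-plus-constant structure of the limiting pressure from Proposition \ref{prop:HP}, so that the Bernoulli identity of Theorem \ref{thm:Heuler} applies to the homogeneous piece $Q_*-p_0$; and the divergence-free identity $\int_{B_r}v\cdot\hat x=0$, without which the additive constant $p_0$ would contribute a nonzero residual and break the contradiction. Case A, by comparison, rests only on \v{S}ver\'ak's classification and routine compactness.
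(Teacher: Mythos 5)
Your proposal is correct and follows essentially the same route as the paper: a contradiction/blow-up argument split into the bounded and unbounded cases for $M[u_n](R_n)$, with Bogovski\v{i}/pressure estimates and Rellich compactness, \v{S}ver\'ak's classification in the bounded case, and the Euler limit combined with Proposition \ref{prop:HP}, Theorem \ref{thm:Heuler}, and the divergence-free flux identity in the unbounded case. The only cosmetic difference is that you invoke weak closedness of $\mH(1)$ directly where the paper passes through compactness of the spherical profiles $\zeta^k$, and you use weak $L^{5/3}$ pressure convergence paired with strong $L^{5/2}$ velocity convergence where the paper upgrades the pressure to strong $L^{5/4}$; both variants are sound.
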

\begin{proof}
	We prove by contradiction. Suppose otherwise that for fixed $\delta_{1}>0$ and $\delta_{2}>0$, there exists a sequence of solutions $\{ (u^k, p^k) \}_{k\in\N}$ and sequences of strictly positive numbers $\{\ep_k\}_{k\in\N}$ and $\{ R_k \}_{k\in \N}$ such that 
	\begin{subequations}
		\begin{gather}
			\ep_k \to 0 
			\qquad \text{ as } \ k\to\infty,\label{CA1}\\
			\frac{1}{R_k^3}\int_{B_{R_k}} \big\lvert u^k-h^k\big\rvert^2 + \frac{1}{R_k} \int_{B_{R_k}} \big\lvert \nabla u^k - \nabla h^k \big\rvert^2 < \ep_k M_k,\label{CA2}\\
			\bigg\lvert \frac{1}{R_k^2} \int_{B_{R_k/2}} \big( |u^k|^2 + 2p^k \big) \, u^k\cdot \frac{x}{|x|} \bigg\rvert \ge \delta_{1} + \delta_{2} M_k^{\frac{3}{2}},\label{CA3} 
		\end{gather}
	\end{subequations}
	where the positive number $M_k$ and function $h^k$ are defined by
	\begin{equation}
		M_k \vcentcolon= \frac{1}{R_k^3} \int_{B_{R_k}} |u^k|^2 + \frac{1}{R_k} \int_{B_{R_k}} |\nabla u^k|^2, \qquad h^k \vcentcolon= \mP_{R_k} [u^k].
	\end{equation}
	From here, we divide the proof into 2 cases:
	\begin{align}
		 \label{case-1} \tag*{\textbf{Case 1}} & \limsup_{k\to \infty} M_k <\infty,\\
		 \label{case-2} \tag*{\textbf{Case 2}} & \limsup_{k\to \infty} M_k = \infty.
	\end{align}
	
	\paragraph{\textbf{Case 1:}} $\limsup_{k\to \infty} M_k <\infty$. For each $k\in\N$ and $y\in B$, we define the functions
	\begin{gather*}
		\bar{u}^k(y) \vcentcolon= R_k u^k(R_k y), \qquad \bar{h}^k(y) \vcentcolon= R_k h^k(R_k y),\\
		\hat{p}^k(y) \vcentcolon= R_k^2 p^k(R_k y), \qquad \bar{p}^k(y) \vcentcolon= \hat{p}^k(y) - \left[ \hat{p}^k \right].
	\end{gather*}
	Then for each $k\in\N$, $\big(\bar{u}^k,\bar{p}^k\big)(y)$ solves the equations
	\begin{equation}\label{NSBL}
		\div \bar{u}^k = 0, \qquad (\bar{u}^k\cdot \nabla)\bar{u}^k + \nabla \bar{p}^k = \Delta \bar{u}^k, \qquad -\Delta \bar{p}^k = \div \div \big( \bar{u}^k \otimes \bar{u}^k \big),
	\end{equation}
	for $y\in B$ in the sense of distribution. In addition, set $M=\sup_{k\in \N} M_k<\infty$. Then inequalities \eqref{CA2}--\eqref{CA3} and the condition $\div \bar{u}^k = 0$ yield
	\begin{subequations}
		\begin{gather}
			\| \bar{u}^k \|_{\mathcal W^{1,2}(B)}^2 \le M, \qquad \int_{B} \big\{ |\bar{u}^k - \bar{h}^k|^2 + |\nabla \bar{u}^k-\nabla \bar{h}^k|^2 \big\} \le M\ep_k,\label{NSCA1}\\
			\bigg\lvert \int_{B_{1/2}} \big( |\bar{u}^k|^2 + 2 \bar{p}^k \big)\,\bar{u}^k \cdot \frac{y}{|y|} \, d y \bigg\rvert = \bigg\lvert \frac{1}{R_k^2} \int_{B_{R_k/2}} \big( |u^k|^2 + 2 p^k \big)\,  u^k\cdot \frac{x}{|x|} \, d x \bigg\rvert 
			\ge \delta_{1}.\label{NSCA2}
		\end{gather}
	\end{subequations} 
	By \eqref{NSCA1} and Sobolev embedding theorem, the sequence $\{\bar{h}_k\}_{k\in\mathbb{N}}$ satisfies
	\begin{align}\label{barhEst}
		\sup\limits_{k\in\N} \big\| \bar{h}^k \big\|_{\mathcal  \mathcal W^{1,2}(B)} \le& \sup\limits_{k\in\N}  \big\| \bar{u}^k - \bar{h}^k \big\|_{\mathcal W^{1,2}(B)} + \sup\limits_{k\in\N} \big\| \bar{u}^k \big\|_{\mathcal W^{1,2}(B)} \le 2\sqrt{M}.
	\end{align}
	Since $h^k\in \mH(R_k)$ for each $k\in\N$, there exists $\zeta^k \in \mathcal W^{1,2}(\Sph^4) $ such that $h^k(x) = \frac{1}{|x|}\zeta^k\big(\frac{x}{|x|}\big)$ for a.e. $x\in B_{R_k}$. Thus $\bar{h}^k(y) = \frac{1}{|y|}\zeta^k\big(\frac{y}{|y|}\big)$ for a.e. $y\in B$. Furthermore by \eqref{W12}, we have the uniform estimate
	\begin{equation*}
		\sup\limits_{k\in\N} \big\lVert \zeta^k \big\rVert_{\mathcal  W^{1,2}(\Sph^4)}^2 \le 3 \sup\limits_{k\in\N} \big\lVert \bar{h}^k \big\rVert_{\mathcal W^{1,2}(B)} \le 6\sqrt{M}.
	\end{equation*}  
	By Rellich-Kondrachov compactness theorem, there exists $\bar{\zeta}^{\infty}\in \mathcal W^{1,2}(\Sph^4)$ and a subsequence in $\mathcal W^{1,2}(\Sph^4)$, which is still denoted as $\{\zeta^{k}\}_{k\in\N}$ for simplicity, such that as $k\to \infty$,
	\begin{align*}
		\zeta^{k} \to \bar{\zeta}^{\infty} \ \text{ strongly in } \ L^2(\Sph^4) \quad \text{and} \quad \nabla_{\Sph^4} \zeta^k \rightharpoonup \nabla_{\Sph^4} \bar{\zeta}^{\infty} \ \text{ weakly in } \ L^2(\Sph^4),
	\end{align*}
	where $\nabla_{\Sph^4}$ denotes the derivative on the sphere $\Sph^4$. Define
	\begin{equation}\label{hinfty}
		\bar{h}^{\infty}(y) \vcentcolon= \frac{1}{|y|} \bar{\zeta}^{\infty}\big( \frac{y}{|y|} \big).
	\end{equation}
	It can be verified that $\bar{h}^{\infty}\in \mathcal W^{1,2}(B)$. Since $\bar{h}^k(y) = \frac{1}{|y|}\zeta^k\big(\frac{y}{|y|}\big)$ for a.e. $y\in B$, it follows that 
	\begin{equation*}
		\bar{h}^{k} \to \bar{h}^{\infty} \ \text{ strongly in } \ L^2(B) \quad \text{ and } \quad \nabla \bar{h}^k \rightharpoonup \nabla \bar{h}^{\infty} \ \text{ weakly in } \ L^2(B)
		\ \text{ as } k\to\infty.
	\end{equation*}
	Combining the above with \eqref{CA1} and \eqref{NSCA1} provides the following convergences
	\begin{equation}\label{NScvg1}
		\bar{u}^k \to \bar{h}^{\infty} \ \text{ strongly in } \ L^2(B) \quad \text{and} \quad  \nabla \bar{u}^k \rightharpoonup \nabla \bar{h}^{\infty} \ \text{ weakly in } \ L^2(B) \quad  \text{ as } \ k \to \infty.
	\end{equation}
	By Sobolev embedding theorem, we have $\sup_{k\in\N}\| \bar{u}^k \|_{L^{10/3}(B)} \le C \sup_{k\in\N}\| \bar{u}^k \|_{\mathcal  W^{1,2}(B)} \le C \sqrt{M} $. Moreover by interpolation inequality, Sobolev embedding theorem and \eqref{NScvg1}, it follows that
	\begin{equation*}
		\left\| \bar{u}^k - \bar{h}^{\infty} \right\|_{L^3} \le \left\| \bar{u}^k - \bar{h}^{\infty} \right\|_{L^2}^{1/6} \left\| \bar{u}^k - \bar{h}^{\infty}  \right\|_{L^{10/3}}^{5/6} \le C \left\| \bar{u}^k - \bar{h}^{\infty} \right\|_{L^2}^{1/6} \left\{  \left\| \bar{u}^k \right\|_{\mathcal W^{1,2}} +  \left\| \bar{h}^{\infty}  \right\|_{\mathcal W^{1,2}}  \right\}^{5/6} \to 0,
	\end{equation*}
	as $k\to \infty$. Thus there exists a further subsequence, which is still denoted as $\{\bar{u}^k\}_{k\in\N}$ such that
	\begin{equation}\label{NScvg2}
		\bar{u}^k \to \bar{h}^{\infty} \ \text{ strongly in } \ L^3(B) \quad \text{and} \quad  \bar{u}^k \rightharpoonup \bar{h}^{\infty} \ \text{ weakly in } \ L^{10/3}(B) \quad  \text{ as } \ k \to \infty.
	\end{equation} 
	Next, we wish to obtain convergences for the pressure sequence $\{\bar{p}^k\}_{k\in\N}$. By construction, the pair $\left(\bar{u}^k,\hat{p}^k\right)$ solves the equations \eqref{distrNS} and \eqref{vq} in $y\in B$. Thus we apply Propositions \ref{prop:bogo}, \ref{prop:PE} and \eqref{NSCA1} to obtain the uniform estimate
	\begin{align*}
		&\sup\limits_{k\in\N}\Big\{ \left\|\bar{p}^k\right\|_{L^{5/3}(B_{1/2})} + \left\|\nabla \bar{p}^k\right\|_{L^{5/4}(B_{1/2})} \Big\} = \sup\limits_{k\in\N}\left\{ \left\|\hat{p}^k - \left[\hat{p}^k\right] \right\|_{L^{5/3}(B_{1/2})} + \big\|\nabla \bar{p}^k\big\|_{L^{5/4}(B_{1/2})} \right\}\\ 
		\le& C \sup\limits_{k\in\N}\big\| \bar{u}^k \big\|_{\mathcal  W^{1,2}(B)} \big\|\nabla \bar{u}^k \big\|_{L^2(B)} + C \sup\limits_{k\in\N}\left\| \hat{p}^k - \left[\hat{p}^k\right] \right\|_{L^1(B)}\\
		\le& C \| \bar{u}^k \|_{\mathcal W^{1,2}(B)} \left\{  1 + \left\| \bar{u}^k \right\|_{\mathcal  W^{1,2}(B)} \right\} \le C \sqrt{M}\left(1+ \sqrt{M}\right),
	\end{align*} 
	where $C>0$ is some generic constant. By Rellich-Kondrachov compactness theorem, there exist a function $\bar{p}^{\infty}\in L^{5/3}(B_{1/2})\cap \mathcal W^{1,5/4}(B_{1/2})$ and a subsequence $\{ \bar{p}^k \}_{k\in\N}$ such that as $k\to \infty$,
	\begin{subequations}\label{NScvg3}
		\begin{gather}
			\bar{p}^k \to \bar{p}^{\infty} \ \text{ strongly in } \ L^{5/4}(B_{1/2}),\\ \nabla \bar{p}^k \rightharpoonup \nabla \bar{p}^{\infty} \ \text{ weakly in } L^{5/4}(B_{1/2}), \qquad 
			\bar{p}^k \rightharpoonup \bar{p}^{\infty} \ \text{ weakly in } L^{5/3}(B_{1/2}).
		\end{gather}
	\end{subequations}
	Applying the convergences \eqref{NScvg1}--\eqref{NScvg3} on the equations \eqref{NSBL}, we have $(\bar{h}^{\infty},\bar{p}^{\infty})$ solves
	\begin{equation}
		\div \bar{h}^{\infty} = 0 \quad \text{ and } \quad \big(\bar{h}^{\infty}\cdot\nabla \big) \bar{h}^{\infty} + \nabla \bar{p}^{\infty} = \Delta \bar{h}^{\infty}
	\end{equation}
	for $y\in B_{1/2}$ in the sense of distribution. Moreover, applying convergences \eqref{NScvg1}--\eqref{NScvg3} to the inequality \eqref{NSCA2} yields 
	\begin{equation}\label{NSCA3}
		\bigg| \int_{B_{1/2}} \big( \big|\bar{h}^{\infty}\big|^2 + 2 \bar{p}^{\infty} \big) \bar{h}^{\infty} \cdot \frac{y}{|y|} \bigg| \ge \delta_{1}.
	\end{equation}
	By the classfication theorem of the homogeneous solution of degree $-1$ to the Navier-Stokes equations \cite{Sverak}, it follows that $\bar{h}^{\infty}=0$, $\bar{p}^{\infty}=p_0$ for some constant $p_0\in \R$. This is a contradiction to the inequality \eqref{NSCA3}.  
	
	\paragraph{\textbf{Case 2:}} $\limsup_{k\to\infty} M_k= \infty$. For each $k\in\N$ and $y\in B$, we define the functions
	\begin{gather*}
		\tilde{u}^k(y) \vcentcolon = \frac{R_k u^k(R_k y)}{\sqrt{M_k}}, \qquad \tilde{h}^k(y) \vcentcolon= \frac{R_k h^k(R_k y)}{\sqrt{M_k}},\\
		\hat{p}^k(y)\vcentcolon= R_k^2 p^k(R_k y), \qquad \tilde{p}^k(y) \vcentcolon = \frac{1}{M_k} \left(\hat{p}^k(y) - \left[\hat{p}^k\right]\right).
	\end{gather*}
	Then for each $k\in\N$, $\left(\tilde{u}^k,\tilde{p}^k\right)(y)$ solves the equations
	\begin{equation}\label{eulerBL}
		\div\tilde{u}^k =0, \qquad (\tilde{u}^k\cdot \nabla)\tilde{u}^k + \nabla \tilde{p}^k = \frac{1}{\sqrt{M_k}}\Delta \tilde{u}^k, \qquad -\Delta \tilde{p}^k = \div \div \big( \tilde{u}^k \otimes \tilde{u}^k \big),  
	\end{equation}
	for $y\in B$ in the sense of distribution. Moreover, by definitions of $\tilde{u}^k$ and $M_k$ we have 
	\begin{align}\label{tildeu1}
		\left\| \tilde{u}^k \right\|_{\mathcal W^{1,2}(B)}^2 = \frac{1}{M_k} \int_{B_{R_k}} \left\{ \frac{\left|u^k(x)\right|^2}{R_k^3} + \frac{\left|\nabla u^k(x)\right|}{R_k}  \right\} \, dx = 1,
	\end{align}
	for all $k\in\mathbb{N}$. Since $\left(\sqrt{M_k}\tilde{u}^k , \hat{p}^k\right)$ is a solution to \eqref{distrNS} in $y\in B$, we apply Proposition \ref{prop:bogo} to obtain
	\begin{equation*}
		\left\| \hat{p}^k - \left[\hat{p}^k\right] \right\|_{L^{5/3}(B)} 
		\le C \sqrt{M_k} \left\|  \tilde{u}^k \right\|_{\mathcal W^{1,2}(B)} \left\{1+ \sqrt{M_k} \left\|  \tilde{u}^k \right\|_{\mathcal W^{1,2}(B)} \right\}.
	\end{equation*}
	Dividing both sides by $M_k$, it follows by \eqref{tildeu1} that
	\begin{align*}
		\sup\limits_{k\in\mathbb{N}}\left\| \tilde{p}^k \right\|_{L^{5/3}(B)}=\sup\limits_{k\in\mathbb{N}}\left\| \frac{\hat{p}^k - \left[\hat{p}^k\right]}{M_k} \right\|_{L^{5/3}(B)} \le C \sup\limits_{k\in\mathbb{N}} \left\|  \tilde{u}^k \right\|_{\mathcal W^{1,2}(B)} \left\{ \frac{1}{\sqrt{M_k}}+ \left\|  \tilde{u}^k \right\|_{\mathcal W^{1,2}(B)} \right\}\le C. 
	\end{align*}
	Therefore by the above estimates, inequalities \eqref{CA2}--\eqref{CA3} and divergence free condition $\div \tilde{u}^k =0$, there exists a generic constant $C>0$ such that for all $k\in\N$
	\begin{subequations}
		\begin{gather}
			\|\tilde{u}^k\|_{\mathcal  W^{1,2}(B)} + \|\tilde{p}^k\|_{L^{5/3}(B)} \le C, \qquad \int_{B} \big\{ |\tilde{u}^k - \tilde{h}^k|^2 + |\nabla \tilde{u}^k-\nabla \tilde{h}^k|^2 \big\} \le \ep_k,\label{EulerCA1}\\
			\bigg\lvert \int_{B_{1/2}} \big( |\tilde{u}^k|^2 + 2 \tilde{p}^k \big)\,\tilde{u}^k \cdot \frac{y}{|y|}\, dy \bigg\rvert  
			= \left| \frac{1}{M_k^{3/2}R_k^2}\int_{B_{R_k/2}} \left( \left|u^k\right|^2 + 2 p^k \right)\, u^k \cdot \frac{x}{|x|} \, dx  \right|\ge \delta_{2}.\label{EulerCA2}
		\end{gather}
	\end{subequations}
	By \eqref{EulerCA1} and Sobolev embedding theorem, the sequence $\{\tilde{h}_k\}_{k\in\mathbb{N}}$ satisfies
	\begin{align}\label{hkEst}
		\sup\limits_{k\in\N} \big\| \tilde{h}^k \big\|_{\mathcal  W^{1,2}(B)} \le& \sup\limits_{k\in\N}  \big\| \tilde{u}^k - \tilde{h}^k \big\|_{W^{1,2}(B)} + \sup\limits_{k\in\N} \big\| \tilde{u}^k \big\|_{\mathcal  W^{1,2}(B)} 
		\le \sup\limits_{k\in\N} \sqrt{\ep_k} + 1 = 2. 
	\end{align}
	Since $h^k\in \mH(R_k)$ for each $k\in\N$, there exists $\zeta^k \in \mathcal W^{1,2}(\Sph^4) $ such that $h^k(x) = \frac{1}{|x|}\zeta^k\big(\frac{x}{|x|}\big)$ for a.e. $x\in B_{R_k}$. If we define $\tilde{\zeta}^k \vcentcolon= M_k^{-1/2}\zeta^k$, then $\tilde{h}^k(y) = \frac{1}{|y|}\tilde{\zeta}^k\big(\frac{y}{|y|}\big)$ for a.e. $y\in B$. Furthermore by \eqref{W12}, we have the uniform estimate
	\begin{equation}
		\sup\limits_{k\in\N} \big\lVert \tilde{\zeta}^k \big\rVert_{\mathcal  W^{1,2}(\Sph^4)}^2 \le 3 \sup\limits_{k\in\N} \big\lVert \tilde{h}^k \big\rVert_{\mathcal  W^{1,2}(B)} \le 6.
	\end{equation}  
	By Rellich-Kondrachov compactness theorem, there exists $\tilde{\zeta}^{\infty}\in \mathcal W^{1,2}(\Sph^4)$ and a subsequence in $\mathcal W^{1,2}(\Sph^4)$, which is still denoted as $\{\tilde{\zeta}^{k}\}_{k\in\N}$ for simplicity, such that as $k\to \infty$,
	\begin{align*}
		\tilde{\zeta}^{k} \to \tilde{\zeta}^{\infty} \ \text{ strongly in } \ L^2(\Sph^4) \quad \text{and} \quad \nabla_{\Sph^4} \tilde{\zeta}^k \rightharpoonup \nabla_{\Sph^4} \zeta^{\infty} \ \text{ weakly in } \ L^2(\Sph^4),
	\end{align*}
	where $\nabla_{\Sph}$ denotes the derivative on the sphere $\Sph^4$. Define
	\begin{equation*}
		\tilde{h}^{\infty}(y) \vcentcolon= \frac{1}{|y|} \tilde{\zeta}^{\infty}\big( \frac{y}{|y|} \big).
	\end{equation*}
	It can be verified that $\tilde{h}^{\infty}\in \mathcal  W^{1,2}(B)$. Since $\tilde{h}^k(y) = \frac{1}{|y|}\tilde{\zeta}^k\big(\frac{y}{|y|}\big)$ for a.e. $y\in B$, it follows that 
	\begin{equation*}
		\tilde{h}^{k} \to \tilde{h}^{\infty} \ \text{ strongly in } \ L^2(B) \quad \text{ and } \quad \nabla \tilde{h}^k \rightharpoonup \nabla \tilde{h}^{\infty} \ \text{ weakly in } \ L^2(B)
		\ \text{ as } k\to\infty.
	\end{equation*}
	Combining the above with \eqref{CA1} and \eqref{EulerCA1} provides the following convergences
	\begin{equation}\label{cvg1}
		\tilde{u}^k \to \tilde{h}^{\infty} \ \text{ strongly in } \ L^2(B) \quad \text{and} \quad  \nabla \tilde{u}^k \rightharpoonup \nabla \tilde{h}^{\infty} \ \text{ weakly in } \ L^2(B) \quad  \text{ as } \ k \to \infty.
	\end{equation}
	By Sobolev embedding theorem, we have $\sup_{k\in\N}\| \tilde{u}^k \|_{L^{10/3}(B)} \le C \sup_{k\in\N}\| \tilde{u}^k \|_{\mathcal  W^{1,2}(B)} \le C $. By interpolation inequality, Sobolev embedding theorem and \eqref{cvg1}, there exists a further subsequence, which is still denoted as $\{\tilde{u}^k\}_{k\in\N}$ such that
	\begin{equation}\label{cvg2}
		\tilde{u}^k \to \tilde{h}^{\infty} \ \text{ strongly in } \ L^3(B) \quad \text{and} \quad  \tilde{u}^k \rightharpoonup \tilde{h}^{\infty} \ \text{ weakly in } \ L^{10/3}(B) \quad  \text{ as } \ k \to \infty.
	\end{equation} 
	Next, we wish to obtain convergences for the pressure sequence $\{\tilde{p}^k\}_{k\in\N}$. Since $\big(\tilde{u}^k,\tilde{p}^k\big)$ solves the first and third equations of \eqref{eulerBL}, we can apply Proposition \ref{prop:PE} and \eqref{EulerCA1} to obtain the uniform estimate
	\begin{align*}
		&\sup\limits_{k\in\N}\left\|\nabla \tilde{p}^k\right\|_{L^{5/4}(B_{1/2})}\le C \sup\limits_{k\in\N}\big\| \tilde{u}^k \big\|_{\mathcal  W^{1,2}(B)} \big\|\nabla \tilde{u}^k \big\|_{L^2(B)} + C \sup\limits_{k\in\N}\| \tilde{p}^k \|_{L^1(B)}\le C,
	\end{align*} 
	where $C>0$ is some generic constant. By Rellich-Kondrachov compactness theorem, there exist a function $\tilde{p}^{\infty}\in L^{5/3}(B_{1/2})\cap \mathcal  W^{1,5/4}(B_{1/2})$ and a subsequence $\{ \tilde{p}^k \}_{k\in\N}$ such that as $k\to \infty$,
	\begin{subequations}\label{cvg3}
		\begin{gather}
			\tilde{p}^k \to \tilde{p}^{\infty} \ \text{ strongly in } \ L^{5/4}(B_{1/2}),\\ \nabla \tilde{p}^k \rightharpoonup \nabla \tilde{p}^{\infty} \ \text{ weakly in } L^{5/4}(B_{1/2}), \qquad 
			\tilde{p}^k \rightharpoonup \tilde{p}^{\infty} \ \text{ weakly in } L^{5/3}(B).
		\end{gather}
	\end{subequations}
	Applying the convergences \eqref{cvg1}--\eqref{cvg3} on the equations \eqref{eulerBL} and using the fact that $M_k\to \infty$, we have $(\tilde{h}^{\infty},\tilde{p}^{\infty})$ solves
	\begin{equation}
		\div \tilde{h}^{\infty} = 0 \quad \text{ and } \quad \big(\tilde{h}^{\infty}\cdot\nabla \big) \tilde{h}^{\infty} + \nabla \tilde{p}^{\infty} = 0
	\end{equation}
	in the sense of distribution in the domain $y\in B_{1/2}$. Moreover, applying convergences \eqref{cvg1}--\eqref{cvg3} to the inequality \eqref{EulerCA2} yields 
	\begin{equation}\label{CA6}
		\bigg| \int_{B_{1/2}} \big( \big|\tilde{h}^{\infty}\big|^2 + 2 \tilde{p}^{\infty} \big) \tilde{h}^{\infty} \cdot \frac{y}{|y|} \bigg| \ge \delta_{2}.
	\end{equation}
	Since $\tilde{h}^{\infty}=\frac{1}{|y|}\tilde{\zeta}^{\infty}(\frac{y}{|y|})$ for some $\tilde{\zeta}^{\infty}\in \mathcal  W^{1,2}(\Sph^4)$, it follows by Proposition \ref{prop:HP} that there exists $\xi \in \mathcal W^{1,5/4}(\Sph^4)$ and a constant $p_0\in\R$ for which $\tilde{p}^{\infty}(y)=\frac{1}{|y|}\xi(\frac{y}{|y|}) + p_0$ holds for a.e. $y\in B_{1/2}$. Therefore, by the classification theorem of the Homogeneous solutions to Euler's equations, Theorem \ref{thm:Heuler}, there exists constant $\mB_0\in \R^3$ such that $|\tilde{h}^{\infty}|^2 + 2\tilde{p}^{\infty} = \mB_0$ for a.e. $y\in B_{1/2}$. Then by the divergence free property $\div \tilde{h}^{\infty} = 0$, we get
	\begin{align}\label{CA7}
		\int_{B_{1/2}} \big( |\tilde{h}^{\infty}|^2 + 2\tilde{p}^{\infty} \big) \tilde{h}^{\infty} \cdot \frac{y}{|y|} = \mB_0 \int_{B_{1/2}} \tilde{h}^{\infty} \cdot \frac{y}{|y|} = 0.
	\end{align}  
	This contradicts the inequality \eqref{CA6}. 
\end{proof}

\begin{corollary}\label{corol:cubicEst}
	Let $\phi\in \mC^{\infty}(\R^5)$ be a spherically symmetric function. Then for fixed constants $\delta_{1},\, \delta_{2}>0$, there exists $\ep>0$ such that if $(u,p)$ is a Leray-Hopf solution to the Navier-Stokes equation satisfying \eqref{h-A} for some $R>0$, then  
	\begin{equation*}
		\left\lvert \frac{1}{R^2} \int_{B_{R/2}} \big(|u|^2 + 2 p\big) u\cdot \nabla \phi  \right\rvert \le \delta_{1} + \delta_{2} \big( M[u](R) \big)^{\frac{3}{2}}.
	\end{equation*}
\end{corollary}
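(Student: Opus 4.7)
The plan is to imitate the contradiction argument used for Proposition \ref{prop:cubicEst} essentially verbatim, with the only change being that the radial unit vector $\frac{x}{|x|}$ is replaced by $\nabla\phi(x) = \phi'(|x|)\frac{x}{|x|}$. Because $\phi$ is spherically symmetric and smooth, $\nabla\phi$ is a bounded radial field with $\phi'(0)=0$; crucially, for any $R>0$ the function $\phi(R|y|)$ is constant on $\partial B_{1/2}$. This means that integration by parts on $B_{1/2}$ pairing a divergence-free field with $\nabla\phi(R\cdot)$ reduces to a constant multiple of $\int_{\partial B_{1/2}}\bar u\cdot\nu\,dS=0$, which is exactly the structural cancellation that made the proof of Proposition \ref{prop:cubicEst} work.

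Concretely, assume the conclusion fails. Then there exist sequences $(u^k, p^k, R_k, \ep_k)$ with $\ep_k\to 0$ satisfying \eqref{h-A} at radius $R_k$, yet
\[
\left|\frac{1}{R_k^2}\int_{B_{R_k/2}}(|u^k|^2+2p^k)\,u^k\cdot\nabla\phi\right| \;\ge\; \delta_1 + \delta_2 M_k^{3/2}.
\]
Setting $\bar u^k(y)=R_k u^k(R_k y)$, $\hat p^k(y)=R_k^2 p^k(R_k y)$, and $\bar p^k=\hat p^k-[\hat p^k]$, this becomes
\[
\left|\int_{B_{1/2}}(|\bar u^k|^2+2\bar p^k)\,\bar u^k\cdot\nabla\phi(R_k y)\,dy\right| \;\ge\; \delta_1 + \delta_2 M_k^{3/2},
\]
the subtracted constant $[\hat p^k]$ dropping out by the boundary computation sketched above. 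Passing to a further subsequence I may assume $R_k\to R_\infty\in[0,\infty]$.

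Now I rerun the two-case dichotomy of Proposition \ref{prop:cubicEst}. In Case 1 ($\limsup M_k<\infty$) the same Rellich--Kondrachov and pressure estimates yield strong $L^3(B)$ convergence $\bar u^k\to \bar h^\infty$ and strong $L^{5/4}(B_{1/2})$ convergence $\bar p^k\to \bar p^\infty$, where $\bar h^\infty\in\mH(1)$ solves the stationary Navier--Stokes equations; \v{S}ver\'ak's classification forces $\bar h^\infty=0$, so that $\bar p^\infty$ is a constant. Since $|\nabla\phi(R_k y)|\le\|\phi'\|_{L^\infty}$ uniformly in $k$, a routine H\"older/interpolation argument shows that the scaled integral tends to $0$, contradicting the lower bound $\delta_1$. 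In Case 2 ($\limsup M_k=\infty$), the limits $(\tilde h^\infty,\tilde p^\infty)$ satisfy the stationary Euler equations, so by Proposition \ref{prop:HP} and Theorem \ref{thm:Heuler} we have $|\tilde h^\infty|^2+2\tilde p^\infty=\mB_0$ for some constant $\mB_0$. Splitting
\[
I_k = \int_{B_{1/2}}\left\{(|\tilde u^k|^2+2\tilde p^k)\tilde u^k - (|\tilde h^\infty|^2+2\tilde p^\infty)\tilde h^\infty\right\}\cdot\nabla\phi(R_k y)\,dy + \mB_0\int_{B_{1/2}}\tilde h^\infty\cdot\nabla\phi(R_k y)\,dy,
\]
the first term tends to $0$ by the convergences above together with the $L^\infty$ bound on the weight, and the second vanishes identically for every $k$ by the divergence theorem applied to the radial function $\phi(R_k|\cdot|)$. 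Therefore $I_k\to 0$, contradicting $\delta_2$.

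The only technical wrinkle is the behavior of the weight $\nabla\phi(R_k y)$ when $R_k\to\infty$, for which pointwise convergence may fail. This is harmless because in Case 1 the limit integrand is identically zero ($\bar h^\infty=0$ and $\bar p^\infty$ constant), while in Case 2 the divergence-theorem identity produces zero for every fixed $k$; the uniform $L^\infty$ bound on the weight combined with the $L^3$ and $L^{5/4}$ convergences already established in Proposition \ref{prop:cubicEst} is all that is needed to close the argument. No new analytic estimate beyond those already developed is required.
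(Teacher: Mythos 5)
Your proof is correct and follows the same contradiction/blow-up argument as the paper's: reduce to Proposition \ref{prop:cubicEst} verbatim with $\frac{y}{|y|}$ replaced by a radial gradient, invoke the \v{S}ver\'{a}k / Euler classifications in the two cases, and kill the limiting integral via the divergence theorem applied to a radial potential and a divergence-free field on $B_{1/2}$.

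Where you genuinely sharpen the paper's two-line sketch is in the treatment of the rescaled weight. The paper simply says to replace $\frac{y}{|y|}$ by $\nabla\phi(y)$ in \eqref{CA6}--\eqref{CA7}, glossing over the fact that after the change of variables $x = R_k y$ the weight actually becomes $\nabla\phi(R_k y)$, which is $k$-dependent and need not converge pointwise. You handle this correctly by noting that the decisive cancellation $\int_{B_{1/2}} w\cdot \nabla\phi(R_k\cdot)\,dy = 0$ (for any divergence-free $w$) holds identically for every fixed $k$ because $\phi(R_k|\cdot|)$ is constant on $\partial B_{1/2}$; this disposes of the $[\hat p^k]$ subtraction, of $\mB_0\int \tilde h^\infty\cdot\nabla\phi(R_k\cdot)$ in Case 2, and (implicitly) of the $\bar p^\infty$ constant in Case 1. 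The residual terms then go to zero using only the uniform $L^\infty$ bound on the weight together with the $L^3$ / $L^{10/3}$ / $L^{5/4}$--$L^{5/3}$ convergences already proved for Proposition \ref{prop:cubicEst}, with an interpolation step to pair $\tilde p^k - \tilde p^\infty$ against $\tilde u^k$ in $L^1$. The one implicit hypothesis you should flag is that this requires $\nabla\phi\in L^\infty(\R^5)$, which the phrase ``$\phi\in\mC^{\infty}(\R^5)$'' alone does not guarantee; it is satisfied, however, by every test function used in the paper (all compactly supported), so the gap is in the corollary's statement rather than in your argument.
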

\begin{proof}
	The proof is almost exactly the same as that of Proposition \ref{prop:cubicEst}, except we replace $\frac{y}{|y|}$ by $\nabla \phi(y)$ in the inequality \eqref{CA6}--\eqref{CA7}. Since $\phi$ is spherically symmetric, there exists $\varphi(s)\in \mC^{\infty}\big([0,\infty)\big)$ such that $\phi(y)=\varphi(|y|)$. It follows that $\nabla \phi(y) = \frac{y}{|y|} \varphi^{\prime}(|y|)$, which is parallel to $\frac{y}{|y|}$. Thus the divergence free condition $\div \tilde{h}^{\infty}=0$ is used in the same way to show \eqref{CA7}, which leads to the contradiction.
\end{proof}

\begin{remark}
	For a suitable weak solution $(u,p)$, the following local energy inequality holds
	\begin{equation}\label{lei}
		\int |\nabla u|^2 \phi\le \int \frac{|u|^2}2\Delta \phi+(|u|^2+2p)u\cdot \nabla \phi,
	\end{equation}
	for all $\phi\in \mC_c^{\infty}(\R^5)$ with $\phi\ge 0$. Set $\phi$ to be a smooth spherically symmetric positive test function $\phi(x)=\phi(|x|)\ge 0$, with the properties $\phi = \frac{4}{R}$ in $B_{R/4}$ and $\phi = 0$ in $\R^5\backslash B_{R/2}$. Then it can be derived from \eqref{lei} that
	\begin{equation}
		M(R/4)\le C M(R/2)+\bigg|\frac{C}{R^2}\int_{B_{R/2}}\big(|u|^2+2p\big) \, u \cdot \frac{x}{|x|}\bigg|,
	\end{equation}
	where $M(R)$ is defined in \eqref{MR}. Under the assumption \eqref{h-A}, we can apply Proposition \ref{prop:cubicEst} on the above inequality to obtain the following inequality
	\begin{equation}
		M(R/4)\le C+ \ep C M^{\frac32}(R). 
	\end{equation}
\end{remark}


%
%

\section{An iteration argument}\label{sec:iteration}

\begin{lemma}\label{lemma:iter}
	Let $F(r)\vcentcolon (0,\infty)\to (0,\infty)$ be a positive function. Suppose there exists $\delta>0$ with
	\begin{equation*}
		\delta \le \min\left\{ \left\{F(1)\right\}^{-3/2}, 2^{-3/2} \right\},
	\end{equation*}
	such that the following recurrence inequality holds  
	\begin{equation}\label{recurr}
		F(4^{-m-1})\le 1+\delta \left\{F(4^{-m})\right\}^{3/2} \qquad \text{for all } \ m\in\mathbb{N}.
	\end{equation}
	Then $F$ is bounded by
	\begin{equation*}
		\sup\limits_{m\in\mathbb{N}} F\left(4^{-m}\right)
		\le \max\left\{2, \delta^{-2/3}\right\}.
	\end{equation*}
\end{lemma}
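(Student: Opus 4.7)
The plan is to set $a_m \vcentcolon= F(4^{-m})$ and $M \vcentcolon= \max\{2,\, \delta^{-2/3}\}$, and then to prove the pointwise statement $a_m \le M$ for every $m \in \mathbb{N}$ by induction on $m$; the claimed supremum bound follows at once.

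Rewriting \eqref{recurr} as $a_{m+1} \le \Phi(a_m)$ with $\Phi(x) \vcentcolon= 1 + \delta x^{3/2}$, I note that $\Phi$ is monotone increasing on $[0,\infty)$, so the inductive step reduces to the invariance property $\Phi(M) \le M$. This is a short case analysis matching the definition of $M$. If $M = \delta^{-2/3} \ge 2$, then $\delta M^{3/2} = 1$, giving $\Phi(M) = 2 \le M$. If instead $M = 2 \ge \delta^{-2/3}$, then the standing bound $\delta \le 2^{-3/2}$ yields $\delta M^{3/2} = 2^{3/2}\delta \le 1$, so $\Phi(M) \le 2 = M$.

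For the base case I would use the other standing hypothesis $\delta \le F(1)^{-3/2}$, which rearranges to $F(1) \le \delta^{-2/3} \le M$, providing $a_0 \le M$. Combined with the monotonicity of $\Phi$ and the invariance just established, the inductive step $a_{m+1} \le \Phi(a_m) \le \Phi(M) \le M$ closes the argument. (If $\mathbb{N}$ is read to start at $1$, the only adjustment is that the base case becomes $a_1 \le 1 + \delta F(1)^{3/2} \le 2 \le M$, using \eqref{recurr} with $m = 0$ together with $\delta F(1)^{3/2} \le 1$.)

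There is essentially no obstacle here: the content of the lemma is that the two smallness conditions on $\delta$ have been tuned precisely so that $M = \max\{2,\, \delta^{-2/3}\}$ is a forward-invariant upper barrier for the super-linear recursion $\Phi$, while $\delta \le F(1)^{-3/2}$ places the initial datum inside that barrier. The only mildly delicate point is recognising that the two thresholds $2$ and $\delta^{-2/3}$ correspond respectively to the fixed-point condition $\delta\cdot 2^{3/2}\le 1$ and to the equation $\delta M^{3/2} = 1$, which is exactly why taking the maximum of the two is the natural (and sharp up to constants) invariant bound.
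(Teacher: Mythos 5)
Your proof is correct, and it takes a genuinely different and considerably simpler route than the paper's. You recast the recurrence as $a_{m+1}\le\Phi(a_m)$ with $\Phi(x)=1+\delta x^{3/2}$ increasing, and prove directly that $M=\max\{2,\delta^{-2/3}\}$ is a forward-invariant barrier ($\Phi(M)\le M$) with the initial datum inside it ($a_0=F(1)\le\delta^{-2/3}\le M$); induction then closes immediately. The paper instead runs an explicit unrolled iteration: it splits into ``Case A'' ($\delta F^{3/2}\le 1$) and ``Case B'' ($\delta F^{3/2}\ge 1$), tracks a maximal run of Case~B indices, proves by an inner induction the compounded estimate $F(4^{-m})\le 1+\tfrac12(2\delta)^{S(\ell)}\{F(4^{\ell-m})\}^{(3/2)^\ell}$ with $S(\ell)=\sum_{j<\ell}(3/2)^j$, and then evaluates the geometric sum and uses $\delta\le 2^{-3/2}$ to collapse the exponential tower back to $2$. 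Both yield the same bound, but the paper's approach is essentially performing by hand the telescoping that your invariance lemma abstracts away in one line. Your version is both shorter and more transparent about what the two thresholds $2$ and $\delta^{-2/3}$ are doing, and it generalizes painlessly to other convex superlinear $\Phi$. One very minor caveat: your parenthetical fallback base case ``$a_1\le 1+\delta F(1)^{3/2}$'' tacitly uses the recurrence at $m=0$; if $\mathbb N$ were read to start at $1$ that instance would not be assumed, but since the lemma's hypothesis explicitly constrains $F(1)=F(4^0)$ it is clear both you and the paper intend $\mathbb N$ to include $0$, so this is a non-issue.
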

\begin{proof}
	For $s\in \mathbb N$, there are two cases 
	\[
	\left\{
	\begin{aligned}
		\text{Case A:}& \quad \delta \left\{F\left(4^{-s}\right)\right\}^{3/2}\le 1, \\
		\text{Case B:}& \quad \delta  \left\{F\left(4^{-s}\right)\right\}^{3/2}\ge 1. 
	\end{aligned}
	\right.
	\]
	Note that Case A exists since $\delta \left\{F(1)\right\}^{3/2} \le 1$. Moreover, for Case A we have that 
	\begin{equation}\label{caseA}
		F\left(4^{-s}\right) \le \delta^{-2/3}.
	\end{equation}
	For an arbitrary $m\in \N$, if Case A holds then we are done. Suppose otherwise that there exists an integer $s\in [1, m-1]$ for which Case A holds for $s$ and Case B holds for all integers in $[s+1, m]$. First, if $s=m-1$, then applying \eqref{recurr} on $F(4^{-m})$ and using the inequality \eqref{caseA} yields
	\begin{align}\label{base}
		F\left(4^{-m}\right) \le 1 + \delta \left\{ F\left(4^{1-m}\right) \right\}^{3/2} \le 1 + \delta \left\{ \delta^{-2/3} \right\}^{3/2} = 2.
	\end{align}
	
	If $s < m-1$, then we set $\ell\vcentcolon= m-s-1 \ge 1$. Moreover, we claim that
	\begin{align}\label{iter-claim}
		F\left(4^{-m}\right) \le 1 + \frac{1}{2} \left(2\delta\right)^{S(\ell)} \left\{ F\left(4^{\ell-m}\right) \right\}^{(3/2)^{\ell}} \quad \text{where } \ S(\ell) \vcentcolon= \sum\limits_{j=0}^{\ell-1}\left(\frac{3}{2}\right)^j.
	\end{align}
	We show the above inequality by induction. The base case $\ell=1$ is the same as the first inequality in \eqref{base}. For the inductive step, assume that $\ell>1$ and there is an integer $k \in [1,\ell)$ for which the following inequality holds
	\begin{equation}\label{step-k}
		F\left(4^{-m}\right) \le 1 + \frac{1}{2} \left(2\delta\right)^{S(k)} \left\{ F\left(4^{k-m}\right) \right\}^{(3/2)^k} \quad \text{where } \ S(k) \vcentcolon= \sum\limits_{j=0}^{k-1}\left(\frac{3}{2}\right)^j.
	\end{equation}
	Applying \eqref{recurr} on the term $F(4^{k-m})$ in the right hand side of the above, we get
	\begin{align*}
		F\left(4^{-m}\right) \le 1 + \frac{1}{2} \left(2\delta\right)^{S(k)} \left\{ 1 + \delta \left\{F\left(4^{k+1-m}\right)\right\}^{3/2} \right\}^{(3/2)^{k}}. 
	\end{align*}
	Since $m-k-1\in (s,m)$, Case B holds for $m-k-1$. Thus the above inequality yields
	\begin{align*}
		F\left(4^{-m}\right)
		\le& 1 + \frac{1}{2} \left(2\delta\right)^{S(k)}\left(2\delta\right)^{(3/2)^{k}} \left\{ F\left(4^{k+1-m}\right) \right\}^{(3/2)^{k+1}} \!\!= 1 + \frac{1}{2} \left(2\delta\right)^{S(k+1)} \left\{ F\left(4^{k+1-m}\right) \right\}^{(3/2)^{k+1}}\!\!. 
	\end{align*}
	This shows that \eqref{step-k} also holds for $k+1$, hence the claim \eqref{iter-claim} holds by induction.
	
	Next, we apply \eqref{recurr} on \eqref{iter-claim} once more, then using the fact that \eqref{caseA} holds for $s$, we obtain 
	\begin{align}\label{iter-temp}
		F\left(4^{-m}\right) \le& 1 + \frac{1}{2} \left(2\delta\right)^{S(l)} \left\{ F\left( 4^{-s-1} \right) \right\}^{(3/2)^l} \le 1 + \frac{1}{2} \left( 2\delta \right)^{S(\ell)} \left\{ 1 + \delta \left\{F\left( 4^{-s} \right)\right\}^{3/2} \right\}^{(3/2)^{\ell}}\\
		\le& 1 + \frac{1}{2} \left( 2\delta \right)^{S(\ell)} \left\{ 1+ \delta  \left\{\delta^{-2/3} \right\}^{3/2} \right\}^{(3/2)^{\ell}} 
		= 1 + \frac{1}{2} \left( 2\delta \right)^{S(\ell)} 2^{(3/2)^{\ell}}. \nonumber
	\end{align}
	Evaluating the Geometric series yields that
	\begin{equation*}
		S(\ell) = \sum_{j=0}^{\ell -1} (3/2)^j = 2 \left\{ \left(\frac{3}{2}\right)^{\ell}-1 \right\}.
	\end{equation*}
	Substituting the above into \eqref{iter-temp} yields the inequality 
	\begin{equation*}
		F\left(4^{-m}\right) 
		\le  1 + \frac{1}{2} \left( 2\delta \right)^{-2} \left\{ 2^{3} \delta^{2} \right\}^{(3/2)^{\ell}}.
	\end{equation*}
	By the assumption, we have $\delta \le 2^{-3/2}$. It holds that $2^{3} \delta^{2} \le 1 $. In addition, since $\ell = m-s -1\ge 1$, one has $(\frac{3}{2})^{\ell}\ge 1$. Using these inequalities in the above, we get 
	\begin{align*}
		F\left(4^{-m}\right) \le& 1 + \frac{1}{2} \left(2\delta\right)^{-2} \left\{2^{3} \delta^{2} \right\} \cdot \left\{2^{3} \delta^{2} \right\}^{-1+(3/2)^{\ell}} \le  1 + \frac{1}{2} \left(2\delta\right)^{-2} 2^{3} \delta^{2} = 2 . 
	\end{align*}
	This completes the proof.
\end{proof}

Rescaling Lemma \ref{lemma:iter} we get 

\begin{lemma}\label{lemma:rIter}
	Fix $b>0$. Let $F(r)\vcentcolon (0,\infty)\to (0,\infty)$ be a positive function. Suppose there exists $\delta$ with
	\begin{equation}\label{delta-b}
		\delta \le \min\left\{ \frac{b}{\{F(1)\}^{3/2}}, \frac{1}{2 \sqrt{2b}} \right\}
	\end{equation}
	such that the following recurrence inequality holds  
	\begin{equation}\label{recurr-b}
		F(4^{-m-1})\le b +\delta \left\{F(4^{-m})\right\}^{3/2} \qquad \text{for all } \ m\in\mathbb{N}.
	\end{equation}
	Then $F$ satisfies the following uniform bound
	\begin{equation*}
		\sup\limits_{m\in\mathbb{N}} F\left(4^{-m}\right)\le \max\left\{2b, \left(\frac{b}{\delta}\right)^{2/3} \right\}.
	\end{equation*}
\end{lemma}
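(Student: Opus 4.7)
The plan is to reduce the statement to Lemma \ref{lemma:iter} by a scaling argument, since the two recurrences differ only by the constant $b$ in the additive term and a scale factor in the conclusion.

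First I would normalize the function. Set $\tilde F(r) := F(r)/b$, so that $\tilde F \vcentcolon (0,\infty)\to(0,\infty)$. Dividing the recurrence \eqref{recurr-b} by $b$ transforms it into
\begin{equation*}
\tilde F(4^{-m-1}) \le 1 + \delta\sqrt{b}\,\{\tilde F(4^{-m})\}^{3/2}, \qquad m\in\N.
\end{equation*}
Thus if I introduce the rescaled parameter $\tilde\delta := \delta\sqrt{b}$, the pair $(\tilde F,\tilde\delta)$ satisfies the hypothesis \eqref{recurr} of Lemma \ref{lemma:iter}.

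Next I would verify that the smallness condition on $\tilde\delta$ required by Lemma \ref{lemma:iter} follows from \eqref{delta-b}. Indeed, the condition $\tilde\delta\le \{\tilde F(1)\}^{-3/2}$ is equivalent, after unpacking $\tilde F(1)=F(1)/b$ and $\tilde\delta=\delta\sqrt b$, to $\delta \le b/\{F(1)\}^{3/2}$, which is precisely the first bound in \eqref{delta-b}. Similarly, $\tilde\delta \le 2^{-3/2}$ is equivalent to $\delta \le 1/(2\sqrt{2b})$, which is the second bound in \eqref{delta-b}. Hence both hypotheses of Lemma \ref{lemma:iter} are met.

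Finally I would apply Lemma \ref{lemma:iter} to obtain
\begin{equation*}
\sup_{m\in\N}\tilde F(4^{-m}) \le \max\{2,\tilde\delta^{-2/3}\},
\end{equation*}
and translate this back into a bound on $F$. Multiplying through by $b$ and substituting $\tilde\delta=\delta\sqrt{b}$ yields
\begin{equation*}
\sup_{m\in\N} F(4^{-m}) \le \max\left\{2b,\; b\cdot(\delta\sqrt{b})^{-2/3}\right\} = \max\left\{2b,\; (b/\delta)^{2/3}\right\},
\end{equation*}
since $b\cdot\delta^{-2/3}b^{-1/3}=b^{2/3}\delta^{-2/3}=(b/\delta)^{2/3}$. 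This is exactly the claimed bound. There is no real obstacle here; the only care needed is to keep the bookkeeping straight so that both smallness conditions translate cleanly under the rescaling, which is why the two conditions on $\delta$ in \eqref{delta-b} are stated in this asymmetric form.
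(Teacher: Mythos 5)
Your proposal is correct and is essentially identical to the paper's own proof: the paper likewise sets $\tilde F(r)=b^{-1}F(r)$, $\tilde\delta=\delta\sqrt{b}$, checks that \eqref{delta-b} translates into the hypotheses of Lemma \ref{lemma:iter}, and rescales the resulting bound back to $F$. The bookkeeping in your verification of both smallness conditions and of the identity $b\,\tilde\delta^{-2/3}=(b/\delta)^{2/3}$ is accurate.
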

\begin{proof}
	Define $\tilde{F}(r)\vcentcolon= b^{-1} F(r)$ and $\tilde{\delta}\vcentcolon=\delta \sqrt{b}$ then \eqref{recurr-b} is rewritten as
	\begin{equation*}
		\tilde{F}(4^{-m-1}) \le 1 + \tilde{\delta} \left\{ \tilde{F}(4^{-m}) \right\}^{3/2} \qquad \text{for all } \ m\in\mathbb{N}.
	\end{equation*}
	The condition \eqref{delta-b} yields that $\tilde{\delta}$ satisfies
	\begin{equation}\label{scaled-delta}
		\tilde{\delta} \le \left\{\tilde{F}(1)\right\}^{-3/2} \quad \text{ and } \quad \tilde{\delta} \le 2^{-3/2}.  
	\end{equation}
	Thus we can apply Lemma \ref{lemma:iter} on the pair $(\tilde{\delta},\tilde{F})$ to obtain that
	\begin{equation*}
		\tilde{F}\left(4^{-m}\right) \le \max \left\{ 2, \tilde{\delta}^{-2/3} \right\} \qquad \text{for all } \ m\in\N.
	\end{equation*}
	Substituting $\tilde{F}=F/b$ and $\tilde{\delta}= \delta \sqrt{b}$, we have
	\begin{equation}\label{Fbound-scaled}
		F\left( 4^{-m} \right) \le \max \left\{ 2b, \left(\frac{b}{\delta}\right)^{2/3} \right\} \qquad \text{for all } \ m\in\N.
	\end{equation}
	This completes the proof.
\end{proof}

%
%
\section{Boundedness of $M(R)$}\label{sec:sup-est}

\begin{proposition}\label{prop:fM}
	Let $C_0>0$ be a constant and $u$ 
	a suitable weak solution of \eqref{eq:problem}. For $\delta>0$, there exists $\ep_0>0$ depending on $C_0$ and $\delta$ such that if $(u,p)$ is a suitable weak solution satisfying 
	\begin{equation}
		\ep_0 \int_{B_R} \Big\{ \frac{|u|^2}{R^3} + \frac{|\nabla u|^2}{R}  \Big\} + \frac{1}{R^3} \int_{B_R} \left| u - \mP_R[u] \right|^2 + \frac{1}{R} \int_{B_R} \left| \nabla u - \nabla \mP_R[u] \right|^2 \le C_0 \ep_0
	\end{equation}
	for some $R>0$, then 
	\begin{equation*}
		\frac{1}{R^3} \int_{B_{R/4}} |u|^2 + \frac{1}{R} \int_{B_{R/4}} |\nabla u|^2 \le \delta 
	\end{equation*}
\end{proposition}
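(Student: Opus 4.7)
The plan is to argue by contradiction and compactness, following closely the blueprint of \textbf{Case 1} in the proof of Proposition \ref{prop:cubicEst}. Fix $\delta>0$ and suppose no admissible $\ep_0$ exists: then there are suitable weak solutions $(u_k,p_k)$, radii $R_k>0$, and $\ep_k\downarrow 0$ satisfying the hypothesis but violating the conclusion. Dividing the hypothesis by $\ep_k$ gives the uniform bound $M_k:=M[u_k](R_k)\le C_0$ and the closeness estimate $\|u_k - h_k\|_{\mathcal{W}^{1,2}_{R_k}}^2 \le C_0 \ep_k\to 0$, where $h_k := \mP_{R_k}[u_k]$, while the negation of the conclusion reads $\int_{B_{R_k/4}}\bigl(R_k^{-3}|u_k|^2 + R_k^{-1}|\nabla u_k|^2\bigr)>\delta$.

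Next I would rescale to the unit ball by setting $\bar{u}_k(y):=R_k u_k(R_k y)$, $\bar{h}_k(y):=R_k h_k(R_k y)$, and $\bar{p}_k(y):=R_k^2 p_k(R_k y)-[R_k^2 p_k(R_k\cdot)]$. The Navier--Stokes system is preserved, and a change of variables turns the three items above into $\|\bar{u}_k\|_{\mathcal{W}^{1,2}(B)}^2\le C_0$, $\|\bar{u}_k-\bar{h}_k\|_{\mathcal{W}^{1,2}(B)}^2\le C_0\ep_k\to 0$, and $\int_{B_{1/4}}(|\bar{u}_k|^2+|\nabla\bar{u}_k|^2)>\delta$. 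Since each $\bar{h}_k\in\mH(1)$ has trace $\zeta_k\in\mathcal{W}^{1,2}(\Sph^4)$ uniformly bounded, Rellich--Kondrachov on the sphere together with the representation $\bar{h}_k(y)=|y|^{-1}\zeta_k(y/|y|)$ produces $\bar{h}_\infty\in\mH(1)$ with $\bar{h}_k\to\bar{h}_\infty$ strongly in $L^2(B)$ and $\nabla\bar{h}_k\rightharpoonup\nabla\bar{h}_\infty$ in $L^2(B)$. Combined with the strong $\mathcal{W}^{1,2}$ convergence $\bar{u}_k-\bar{h}_k\to 0$, this yields $\bar{u}_k\to\bar{h}_\infty$ strongly in $L^q(B)$ for every $q\in[2,10/3)$ and weakly in $\mathcal{W}^{1,2}(B)$. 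Using Proposition \ref{prop:bogo} and Proposition \ref{prop:PE}, the pressures $\bar{p}_k$ are uniformly bounded in $L^{5/3}(B_{1/2})$ and $\nabla\bar{p}_k$ in $L^{5/4}(B_{1/2})$, so I can extract a subsequence with $\bar{p}_k\to\bar{p}_\infty$ strongly in $L^{5/4}(B_{1/2})$ and weakly in $L^{5/3}(B_{1/2})$. Passing to the limit in the Navier--Stokes system shows that $(\bar{h}_\infty,\bar{p}_\infty)$ is a weak solution on $B_{1/2}$ with $\bar{h}_\infty$ homogeneous of degree $-1$, and \v{S}ver\'ak's classification then forces $\bar{h}_\infty\equiv 0$.

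To extract the contradiction I would invoke the local energy inequality \eqref{lei} with a spherically symmetric cutoff $\phi\in\mC_c^\infty(B_{1/2})$, $\phi\ge 0$, $\phi\equiv 1$ on $B_{1/4}$:
\[
\int_{B_{1/2}} |\nabla\bar{u}_k|^2\phi \;\le\; \int_{B_{1/2}} \tfrac{|\bar{u}_k|^2}{2}\,\Delta\phi \;+\; \int_{B_{1/2}} \bigl(|\bar{u}_k|^2+2\bar{p}_k\bigr)\,\bar{u}_k\cdot\nabla\phi.
\]
The Laplacian term tends to $0$ because $\bar{u}_k\to 0$ in $L^2$; the cubic term is bounded by $C\|\bar{u}_k\|_{L^3(B_{1/2})}^3\to 0$; and the pressure--velocity term is bounded by $C\|\bar{p}_k\|_{L^{5/3}}\|\bar{u}_k\|_{L^{5/2}}\to 0$. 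Consequently $\int_{B_{1/4}}|\nabla\bar{u}_k|^2\to 0$, and since also $\int_{B_{1/4}}|\bar{u}_k|^2\to 0$, this contradicts $\int_{B_{1/4}}(|\bar{u}_k|^2+|\nabla\bar{u}_k|^2)>\delta$.

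The main obstacle is that the compactness argument only supplies \emph{weak} $\mathcal{W}^{1,2}$ convergence of $\bar{u}_k$, so strong $L^2$ convergence of $\nabla\bar{u}_k$ on the interior ball $B_{1/4}$ must be recovered separately; this is precisely what the suitable-weak-solution framework buys us, through the local energy inequality, provided one can control the cubic pressure term. Managing that term via the uniform $L^{5/3}$ pressure bound from Proposition \ref{prop:PE} and the strong $L^{5/2}$ convergence of $\bar{u}_k$ is the delicate step; the rest is standard compactness plus \v{S}ver\'ak's rigidity result.
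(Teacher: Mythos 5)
Your proposal is correct and follows essentially the same route as the paper's proof: a contradiction/rescaling compactness argument, Rellich--Kondrachov on the sphere to pass to a limiting element of $\mH$, uniform pressure bounds from Propositions \ref{prop:bogo} and \ref{prop:PE}, \v{S}ver\'ak's classification to force the limit to vanish, and the local energy inequality with a cutoff to upgrade to strong $L^2$ gradient convergence on $B_{1/4}$ and derive the contradiction with the lower bound $\delta$. The only small point the paper makes explicit that you leave implicit is invoking Proposition \ref{prop:HP} to establish the homogeneity of the limiting pressure before applying \v{S}ver\'ak's theorem, but as you note, that is essentially automatic from the pressure Poisson equation once $\bar h_\infty$ is homogeneous of degree $-1$.
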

\begin{proof}
	We prove by contradiction. Let $\delta>0$ be fixed. Suppose otherwise. Then there exist a sequence of solutions $(u^k,p^k)$ and sequences of positive numbers $\{R_k\}_{k\in\mathbb{N}}$ and $\{ \ep_k \}_{k\in\mathbb{N}}$ such that
	\begin{subequations}\label{fM:CA'}
		\begin{gather}
			\ep_k \to 0  \qquad \text{as } \ k\to\infty,\label{fM:CA'1}\\
			\frac{1}{R_k^3} \int_{B_{R_k}} \left| u^k - \mP_{R_k}[u^k] \right|^2 + \frac{1}{R_k} \int_{B_{R_k}} \left| \nabla u^k - \nabla \mP_{R_k}[u^k] \right|^2 \le C_0 \ep_k,\label{fM:CA'2}\\
			\int_{ B_{R_k}} \Big\{ \frac{|u^k|^2}{R_k^3} + \frac{|\nabla u^k|^2}{R_k} \Big\} \le C_0, \qquad 
			\int_{ B_{R_k/4}} \Big\{ \frac{|u^k|^2}{R_k^3} + \frac{|\nabla u^k|^2}{R_k} \Big\} \ge  \delta.\label{fM:CA'3}
		\end{gather}
	\end{subequations}
	For each $k\in\mathbb{N}$ and $y\in B$, we define the scaled function
	\begin{gather*}
		\bar{u}^k(y) \vcentcolon= R_k u^{k}(R_k y),  \qquad \bar{h}^k \vcentcolon= R_k \mP_{R_k}[u^k](R_k y),\\
		\hat{p}^k(y) \vcentcolon= R_k^2 p^k(R_k y), \qquad \bar{p}^k(y) \vcentcolon= \hat{p}^k(y) - \left[ \hat{p}^k \right].
	\end{gather*} 
	Then inequalities \eqref{fM:CA'} yields that for all $k\in\mathbb{N}$
	\begin{subequations}\label{fM:CA}
		\begin{gather}
			 \int_{B} \left| \bar{u}^k - \bar{h}^k \right|^2 + \int_{B} \left| \nabla \bar{u}^k - \nabla \bar{h}^k \right|^2 \le C_0 \ep_k, \qquad \int_{B} \Big\{ |\bar{u}^k|^2 + |\nabla \bar{u}^k|^2 \Big\} \le C_0, \label{fM:CA1}\\
			\int_{ B_{1/4} } \Big\{ |\bar{u}^k|^2 + |\nabla \bar{u}^k|^2 \Big\} \ge  \delta,\label{fM:CA2}
		\end{gather}
	\end{subequations}
	and $(\bar{u}^k,\bar{p}^k)$ solves the following equations in the sense of distribution
	\begin{equation}\label{fM:distr}
		\div \bar{u}^k = 0, \qquad (\bar{u}^k\cdot \nabla)\bar{u}^k + \nabla \bar{p}^k = \Delta \bar{u}^k, \qquad -\Delta \bar{p}^k = \div \div \big( \bar{u}^k \otimes \bar{u}^k \big),
	\end{equation}
	From the second inequality of \eqref{fM:CA1} and Proposition \ref{prop:bogo}, we have
	\begin{equation}
		\sup\limits_{k\in\N} \left\| \bar{p}^k \right\|_{L^{5/3}(B)} \le C \sup\limits_{k\in\N} \left\| \bar{u}^k \right\|_{\mathcal W^{1,2}(B)} \left\{ 1 + \left\| \bar{u}^k \right\|_{\mathcal W^{1,2}(B)}  \right\} \le C \sqrt{C_0} \left\{ 1 + \sqrt{C_0} \right\}.
	\end{equation} 
	Thus there exist $p^{\infty}\in L^{5/3}(B)$ and a subsequence, which is still denoted as $\{ \bar{p}^k \}_{k\in\mathbb{N}}$ such that
	\begin{equation}\label{cvg-p}
		\bar{p}^k \rightharpoonup p^{\infty} \quad  \text{ weakly in } \ L^{5/3}(B) \ \text{ as } \ k\to \infty.
	\end{equation}
	The conditions \eqref{fM:CA1} is the same as \eqref{NSCA1} in \ref{case-1} for the proof of Proposition \ref{prop:cubicEst}. Thus by the same argument, we obtain that there exists a subsequence and a function $h^{\infty}\in \mH(1/2)$ such that
	\begin{equation}
			\label{cvg-u}\bar{u}^{k} \to h^{\infty} \text{ strongly in } L^3(B), \quad \bar{u}^{k} \rightharpoonup h^{\infty} \text{ weakly in } L^{10/3}(B), \quad \nabla \bar{u}^{k} \rightharpoonup \nabla h^{\infty} \text{ weakly in } L^{2}(B),
	\end{equation}
	as $k\to\infty$. By the convergences \eqref{cvg-p}--\eqref{cvg-u} and the equation \eqref{fM:distr}, it holds that $\left(h^{\infty},p^{\infty}\right)$ satisfies 
	\begin{equation*}
		\div h^{\infty} = 0, \qquad \left( h^{\infty}\cdot \nabla \right) h^{\infty} + \nabla p^{\infty} = \Delta h^{\infty}, \qquad -\Delta p^{\infty} = \div \div \left(h^{\infty}\otimes h^{\infty}\right) 
	\end{equation*}
	in the sense of distribution in $y\in B$. Since $h^{\infty}$ is homogeneous of degree $-1$, it follows by Proposition \ref{prop:HP} that there exists a constant $p_0\in \R$ such that $p^{\infty}-p_0$ is homogeneous of degree $-2$. By Sevrak's classification of homogeneous solution for Navier-Stokes equations \cite{Sverak}, it follows that $\left(h^{\infty}, p^{\infty} \right)= (0,p_0)$ in $y\in B_{1/2}$. 
	
	Let $\phi \in \mC_{c}^{\infty}(\R^5)$ be a positive spherically symmetric function such that $\phi =1$ in $B_{1/4}$ and $\phi= 0$ in $\R^{5}\backslash B_{1/2}$. Taking $\phi$ in the local energy inequality \eqref{lei} for $\left( \bar{u}^{k}, \bar{p}^{k} \right)$ and using the equation $\div \bar{u}^{k} =0$, we have 
	\begin{align*}
		\int_{B_{1/4}} \left| \nabla \bar{u}^{k} \right|^2 \le  C \int_{B_{1/2}}\left|\bar{u}^{k} \right|^2  + C \left|\int_{B_{1/2}} \Big\{ \frac{1}{2}\left| \bar{u}^{k} \right|^2 + \bar{p}^{k} \Big\} \bar{u}^k\cdot \nabla \phi   \right|.
	\end{align*}  
	Applying H\"older's inequality, Corollary \ref{corol:bogo-Scaled}, Proposition \ref{prop:PE} and condition \eqref{fM:CA1} on the above yields
	\begin{align*}
		\int_{B_{1/4}} \left| \nabla \bar{u}^k \right|^2 \le& C \left\| \bar{u}^k \right\|_{L^2(B_{1/2})}^2 + C \left\| \bar{u}^k \right\|_{L^3(B_{1/2})}^3 + C \left\| \bar{u}^k \right\|_{L^{5/2}(B_{1/2})} \left\| \hat{p}^k - \left[\hat{p}^k\right] \right\|_{L^{5/3}(B)} \\
		\le &  C \left\| \bar{u}^k \right\|_{L^2(B_{1/2})}^2 + C \left\| \bar{u}^k \right\|_{L^3(B_{1/2})}^3 + C \sqrt{C_0} \left(1+ \sqrt{C_0}\right) \left\| \bar{u}^k \right\|_{L^{5/2}(B_{1/2})} 
	\end{align*} 
	Therefore there exists some constant $C_1 >0$ depending on $C_0$ such that for all $k\in\mathbb{N}$
	\begin{equation*}
		\int_{B_{1/4}} \left\{ \left| \bar{u}^k \right|^2 + \left| \nabla \bar{u}^k \right|^2 \right\} \le C_1 \left\{ \left\| \bar{u}^k \right\|_{L^2(B_{1/2})}^2 + \left\| \bar{u}^k \right\|_{L^3(B_{1/2})}^3 + \left\| \bar{u}^k \right\|_{L^{5/2}(B_{1/2})} \right\}.
	\end{equation*}
	 By condition \eqref{fM:CA2}, it follows that for all $k\in\mathbb{N}$,
	 \begin{equation*}
	 	\delta \le C_1 \left\{ \left\| \bar{u}^k \right\|_{L^2(B_{1/2})}^2 + \left\| \bar{u}^k \right\|_{L^3(B_{1/2})}^3 + \left\| \bar{u}^k \right\|_{L^{5/2}(B_{1/2})} \right\}.
	 \end{equation*}
	 By convergence \eqref{cvg-u} and the fact that $\left(h^{\infty},p^{\infty}\right)=(0, p_0)$, we obtain
	 \begin{equation*}
	 	\delta \le C_1 \left\{ \left\| h^{\infty} \right\|_{L^2(B_{1/2})}^2 + \left\| h^{\infty} \right\|_{L^3(B_{1/2})}^3 + \left\| h^{\infty} \right\|_{L^{5/2}(B_{1/2})} \right\} =0.
	 \end{equation*}
	  This is a contradiction.
\end{proof}

The previous lemma, combined with lemmas \ref{lem:Lin} and \ref{lem:Linbbb} shows that 
if $\limsup_{R\to 0} M(R)<\infty$ and $u$ is close to a function 
$h\in \mathcal H(R)$ in $\mathcal H$ norm then 
$u$ is regular at zero. The aim of the next lemma is to show that a suitable lower bound on 
$m=\liminf_{R\to 0} M(R)<\infty$, implies that  $\limsup_{R\to 0} M(R)<\infty$.

Setting a spherically symmetric test function $\phi\in \mC_c^{\infty}(\R^5)$ in the local energy inequality \eqref{LEI}, we can obtain that for all $R>0$
\begin{equation}\label{M-energy}
	M(R) \le C_E M(4R) + \left|\frac{C_E}{R^2}\int_{B_{2R}} \left(|u|^2 + 2 p\right) u\cdot \nabla \phi \right|,
\end{equation}
where $C_E\ge 1$ is a constant depending only on the dimension. 

\begin{lemma}\label{lemma:MBound}
	Let $m=\liminf_{R\to 0} M(R)\in (8 C_E, \infty)$.
	There exists $\ep=\ep(m)>0$ which depends on $m$, such that if $(u,p)$ is a suitable weak solution to the Navier-Stokes equations satisfying
	\begin{equation}\label{h-assump}
		\frac{1}{R^3} \int_{B_R} \big|u- \mP_R[u] \big|^2 + \frac{1}{R} \int_{ B_R} \big|\nabla u - \nabla \mP_R[u] \big|^2 \le \ep M(R)
	\end{equation} 
	for all $R\in(0, 1]$, then the scaled function $M(R)$ is uniformly bounded
	\begin{align*}
		\sup\limits_{0<R\le 1} M(R) <\infty.
	\end{align*}
\end{lemma}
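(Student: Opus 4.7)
The plan is to derive a dyadic recurrence for $M$ of the form $M(R/4)\le b+\delta M(R)^{3/2}$, then invoke the iteration Lemma \ref{lemma:rIter}, with the initial scale supplied by the $\liminf$ hypothesis. As in the remark at the end of Section \ref{sec:technical}, I would first insert a spherically symmetric cutoff $\phi$ equal to $4/R$ on $B_{R/4}$ and supported in $B_{R/2}$ into the local energy inequality \eqref{LEI}, which (together with the trivial comparison $M(R/2)\le 8 M(R)$) yields
\[
M(R/4) \;\le\; C\, M(R) \;+\; C \left\lvert \frac{1}{R^2}\int_{B_{R/2}} (|u|^2+2p)\, u\cdot \frac{x}{|x|} \right\rvert
\]
for an absolute constant $C$. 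Under the standing closeness hypothesis \eqref{h-assump}, Corollary \ref{corol:cubicEst} bounds the cubic term by $\delta_{1}+\delta_{2} M(R)^{3/2}$ for any prescribed $\delta_{1},\delta_{2}>0$, with $\epsilon=\epsilon(\delta_{1},\delta_{2})$ supplied by the corollary. After absorbing the linear $C\, M(R)$ piece into the constant and the $M^{3/2}$ term, this produces
\[
M(R/4) \;\le\; b \;+\; \delta\, M(R)^{3/2} \qquad \text{for all } R\in(0,1],
\]
with $b$ a fixed constant and $\delta$ tunable by $\epsilon$.

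Next I would use $\liminf_{R\to 0} M(R)=m$ to select some $R_0\in(0,1]$ with $M(R_0)\le 2m$, and rescale $\tilde u(y)\vcentcolon= R_0\, u(R_0 y)$, $\tilde p(y)\vcentcolon= R_0^2 p(R_0 y)$. The scale-invariance of the closeness condition and of $M$ on $\mH$ ensures $\tilde u$ is again a suitable weak solution on $B_1$ obeying \eqref{h-assump} at every scale in $(0,1]$, with $F(1)\vcentcolon= M[\tilde u](1)=M[u](R_0)\le 2m$. The recurrence applied to $\tilde u$ at dyadic scales reads $F(4^{-k-1})\le b+\delta F(4^{-k})^{3/2}$, and Lemma \ref{lemma:rIter} then yields $\sup_{k\in\mathbb{N}} F(4^{-k})\le\max\{2b,(b/\delta)^{2/3}\}$, provided $\delta\le b/(2m)^{3/2}$ and $\delta\le 1/(2\sqrt{2b})$; both conditions are enforced by taking $\epsilon=\epsilon(m)$ sufficiently small. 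Scales between the dyadic grid are handled by the elementary bound $M(r)\le 64\, M(R)$ valid for $R/4<r\le R$ (applied with $R=R_0\cdot 4^{-k}$), while $R\in[R_0,1]$ is controlled directly by $M(R)\le R_0^{-3}\|u\|_{\mathcal W^{1,2}(B_1)}^2$ from the $\mathcal W^{1,2}_{loc}$ regularity of suitable weak solutions. Combining these gives $\sup_{0<R\le 1} M(R)<\infty$.

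The principal obstacle is the first step: the raw energy inequality produces the linear coefficient $C\, M(R)$, in which $C$ is a dimension-dependent constant that is \emph{not} small in $\epsilon$, whereas Lemma \ref{lemma:rIter} demands a pure constant plus a small multiple of $M(R)^{3/2}$. Reducing the former to the latter requires a more careful use of the decomposition $u=\mP_R[u]+w$ when estimating $\int(|u|^2/2)\Delta\phi$: the scale-invariance of $M$ on $\mH(R)$ shows that the $\mP_R[u]$-contribution carries across scales with coefficient essentially one rather than $8$, while the remainder $w$ contributes only $O(\epsilon)\, M(R)$ by closeness. Only after these refinements does the effective coefficient of $M(R)$ become absorbable into $b+\delta M(R)^{3/2}$ with $\delta$ small enough (relative to the $\liminf$ bound $2m$) to close the iteration.
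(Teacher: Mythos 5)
Your overall architecture (local energy inequality at dyadic scales, Corollary \ref{corol:cubicEst} for the cubic term, a starting scale $R_0$ chosen from the $\liminf$, then Lemma \ref{lemma:rIter}) matches the paper's, but there is a genuine gap at exactly the step you flag as ``the principal obstacle,'' and the fix you propose is not the one that works. You want a recurrence $M(R/4)\le b+\delta M(R)^{3/2}$ with ``$b$ a fixed constant and $\delta$ tunable by $\epsilon$.'' That is not attainable: $\epsilon$ only controls the $\delta_2$-part coming from Corollary \ref{corol:cubicEst}; the linear piece $C_E M(R)$ from the energy inequality can only be turned into a cubic term by writing $C_E M(R)=\frac{C_E}{\sqrt{M(R)}}M(R)^{3/2}$, and the resulting coefficient is of size $C_E/\sqrt{m}$ (using $\inf_{(0,R_0]}M\ge m/2$, which you should also state explicitly when choosing $R_0$), independent of $\epsilon$. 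The paper does not try to shrink this coefficient at all. Instead it keeps $\delta\approx \delta_2+\sqrt{2}\,C_E/\sqrt{m}$, chooses the additive constant \emph{large}, $\delta_1=m^2/2$ (so $b$ grows with $m$, contrary to your ``fixed constant''), and then the hypothesis $\delta\le \delta_1/F(1)^{3/2}$ of Lemma \ref{lemma:rIter} becomes, with $F(1)\le 2m$, precisely the inequality $\delta_2+\sqrt{2}C_E/\sqrt{m}\le \sqrt{m}/(4\sqrt{2})$, i.e.\ it is here that the unused hypothesis $m>8C_E$ enters. Your sentence ``both conditions are enforced by taking $\epsilon=\epsilon(m)$ sufficiently small'' is therefore false as stated: with $b$ a fixed constant, the condition $\delta\le b/(2m)^{3/2}$ fails for the $C_E/\sqrt{m}$ contribution no matter how small $\epsilon$ is; it can only be met by letting $b=\delta_1$ be of order at least $C_E\,m$, which is why $\epsilon$ ends up depending on $m$ through the choice $(\delta_1,\delta_2)$ in Proposition \ref{prop:cubicEst}.

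The alternative resolution you sketch --- decomposing $u=\mP_R[u]+w$ so that ``the $\mP_R[u]$-contribution carries across scales with coefficient essentially one'' --- does not close the argument even if the (unproven) claim were granted. Lemma \ref{lemma:rIter} admits no linear term at all: a recurrence of the form $F(4^{-k-1})\le (1+O(\sqrt{\epsilon}))F(4^{-k})+b+\delta F(4^{-k})^{3/2}$, or even with coefficient exactly $1$, does not give a uniform bound (already $F_{k+1}\le F_k+b$ grows linearly), so the linear term must in any case be absorbed into the cubic term via the lower bound $M\ge m/2$, and then the coefficient is again $\gtrsim 1/\sqrt{m}$, not $O(\sqrt{\epsilon})$. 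In other words, the ``essentially one'' refinement buys nothing structurally; what makes the iteration close is the combination (i) absorb the \emph{entire} linear term $C_EM(R)$ into $M(R)^{3/2}$ using $\inf_{(0,R_0]}M(R)\ge m/2$, (ii) take $\delta_1\sim m^2$ large, and (iii) use $m>8C_E$ to verify the smallness hypothesis of Lemma \ref{lemma:rIter} --- as in \eqref{M-recurr} and the subsequent parameter choice in the paper. Your handling of the rescaling to $R_0$, of intermediate (non-dyadic) scales, and of $R\in[R_0,1]$ is fine.
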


\begin{proof}
	By Corollary \ref{corol:cubicEst}, there exists $\ep>0$ such that if $(u,p)$ satisfies \eqref{h-assump} we get that
	\begin{align*}
		\bigg|\frac{C_E}{R^2}\int_{B_{2R}} \big( |u|^2 + 2 p \big) \, u \cdot \nabla \phi \bigg|  \le \delta_{1} + \delta_{2} \left(M(4R)\right)^{\frac{3}{2}}.
	\end{align*}
	Combining the above inequality with \eqref{M-energy} yields that there exists $\ep>0$ for which if $(u,p)$ satisfies \eqref{h-assump} with $\ep$ then for all $R>0$
	\begin{align}\label{M-recurr}
		M(4^{-1}R) \le C_E M(R) + \delta_{1} + \delta_{2} \left(M(R)\right)^{\frac{3}{2}} \le \delta_{1} + \left(M(R)\right)^{\frac{3}{2}} \left\{ \delta_{2}+ \frac{C_E}{\sqrt{M(R)}}\right\}.
	\end{align} 
	In order to apply the iteration inequality, Lemma \ref{lemma:rIter}, we set $F(4^{-m})\vcentcolon= M(4^{-m}R_0)$ for $m\in\mathbb{N}$. We need to check that for some small $R_0$ the following inequality holds
	\[
	\sup_{(0, R_0]}\left(\delta_2+\frac{C_E}{\sqrt{M(R)}}\right)\le \min\left\{ \frac{\delta_1}{\{F(1)\}^{3/2}}, \frac{1}{2 \sqrt{2\delta_1}} \right\}=\begin{cases}
	\frac{1}{2 \sqrt{2\delta_1}}, &\ \mbox{if}\ \sqrt{2\delta_1}\ge F(1),\\
	\frac{\delta_1}{\{F(1)\}^{3/2}}, &\ \mbox{if}\ \sqrt{2\delta_1}< F(1).
	\end{cases}
	\]
	Since $m = \liminf_{R \to 0^+} M(R) < \infty$, we can choose a point $0<R_0\le 1$ such that $m \le F(1)=M(R_0) \le 2m$ and $\inf_{R\in(0,R_0]}M(R) > \frac{m}{2}$. Then for all $R\in (0,R_0]$,
	\[
	\delta_2+\frac{C_E}{\sqrt{M(R)}}
	\le \delta_2 + \frac{\tilde{C}_E}{\sqrt{m}} \qquad \text{where } \ \tilde{C}_E := \sqrt{2} C_E.
	\]
	Thus we want to show that 
	for some choice of $\delta_1, \delta_2$, the following inequality is satisfied
	\[
	\delta_2+\frac{\tilde{C}_E}{\sqrt{m}}\le \begin{cases}
	\frac{1}{2 \sqrt{2\delta_1}}, &\ \mbox{if}\ \sqrt{2\delta_1}> F(1),\\
	\frac{\delta_1}{\{F(1)\}^{3/2}}, &\ \mbox{if}\ \sqrt{2\delta_1}\le F(1).
	\end{cases}
	\]
	 Take $\sqrt{2\delta_1}=m$. Since $ m \le F(1)\le  2m$, we have
	\[
	\frac{\delta_1}{\{2m\}^{3/2}} \le \frac{\delta_1}{\{F(1)\}^{3/2}} .
	\] 
	Thus it is enough to require 
	\[
	\delta_2+\frac{\tilde{C}_E}{\sqrt{m}}\le \frac{\delta_1}{\{2m\}^{3/2}}.
	\]
	By our choice $\delta_1=m^2/2.$ Hence the inequality that we demand is 
	\[
	\delta_2+\frac{\tilde{C}_E}{\sqrt{m}}\le \frac{\sqrt m}{4\sqrt 2}, 
	\]
	or equivalently we require that 
	\[
	0 \le m-4\sqrt 2 \sqrt m\delta_2- 8 C_E=(\sqrt m-2\sqrt2 \delta_2)^2-8\delta_2^2- 8 C_E.
	\]
	Thus the desired inequality is satisfied if we choose $\delta_2>0$ such that
	\[
	\sqrt m\ge 2\sqrt2 \delta_2+\sqrt{8(\delta_2^2 + C_E)}.
	\]
	Consequently, for $m$ as above we apply the iteration inequality Lemma \ref{lemma:rIter} to conclude that 
	\[
	\sup\limits_{k\in\mathbb{N}} F\left(4^{-k}\right)\le \max\left\{m^2, \left(\frac{m^2 \sqrt{m}}{2\delta_2\sqrt{m}+2\sqrt{2} C_E}\right)^{2/3} \right\}.
	\]
	
\end{proof}
\begin{remark}
	Let $m$ be as in Lemma \ref{lemma:MBound} and $\delta_2>0$ be the constant chosen in its proof. Then for large $m\in(8C_E, \infty)$
	the constant 
	 \begin{equation}\label{eq:Cmm}
	C_0(m)=\max\left\{m^2, \left(\frac{m^2 \sqrt{m}}{2\delta_2\sqrt{m}+2\sqrt{2} C_E}\right)^{2/3} \right\}
	\end{equation}
	is at least quadratically large. Consequently, in 
	Proposition \ref{prop:fM} one should take 
	$\ep$ sufficiently small. 
	
\end{remark}

\section{Regularity of solution}\label{sec:proofofmain}

\begin{lemma}\label{lemma:reg}
Suppose $m:=\liminf_{R\to 0}M(R)<\infty$. 
	If there exists a sufficiently small $\ep>0$, depending on $m$,  such that  $(u,P)$ is a suitable weak solution to the Navier-Stokes equations satisfying
	\begin{equation}\label{H-Assump}
		\ep \frac{1}{R^{3}}\int_{B_R} |p| + \frac{1}{R^3} \int_{B_R} \left| u - \mP_R[u] \right|^2 + \frac{1}{R} \int_{B_R} \left| \nabla u - \nabla \mP_R[u] \right|^2 \le \ep M[u](R), 
	\end{equation}
	for all $R\in(0,1]$ then $u$ is regular at $x=0$.
\end{lemma}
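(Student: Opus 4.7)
The idea is to chain together the three main tools developed earlier: the a~priori uniform bound on $M$ from Section~\ref{sec:sup-est}, the decay step provided by Proposition~\ref{prop:fM}, and an $\ep$-regularity criterion of Caffarelli--Kohn--Nirenberg type (Lemmas~\ref{lem:Lin}, \ref{lem:Linbbb}). First I would record that the hypothesis \eqref{H-Assump} separates into two usable bounds, $\tfrac{1}{R^3}\!\int_{B_R}|u-\mP_R[u]|^2+\tfrac{1}{R}\!\int_{B_R}|\nabla u-\nabla\mP_R[u]|^2\le\ep\,M(R)$ and $\tfrac{1}{R^3}\!\int_{B_R}|p|\le M(R)$, valid for every $R\in(0,1]$. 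If $m>8C_E$, the first of these together with Lemma~\ref{lemma:MBound} (taking $\ep$ below the threshold $\ep(m)$ of that lemma) upgrades $\liminf_R M(R)=m<\infty$ to a global bound $\sup_{R\in(0,1]}M(R)\le C_0(m)$ with $C_0(m)$ as in~\eqref{eq:Cmm}. If instead $m\le 8C_E$, then a subsequence realizing $m$ already puts $M$ below the $\ep$-regularity threshold and we conclude directly.

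Next I would fix $\delta>0$ small (how small will be dictated by the $\ep$-regularity step) and apply Proposition~\ref{prop:fM} with its free parameter ``$C_0$'' set to $2C_0(m)$. This yields some $\ep_0=\ep_0(C_0(m),\delta)>0$ whose applicability condition, in view of the global bound $M(R)\le C_0(m)$ and the closeness estimate above, reads $(\ep_0+\ep)C_0(m)\le 2C_0(m)\,\ep_0$, i.e.\ $\ep\le\ep_0$. Shrinking $\ep$ once more, Proposition~\ref{prop:fM} then applies at every scale $R\in(0,1]$ and returns $\tfrac{1}{R^3}\!\int_{B_{R/4}}|u|^2+\tfrac{1}{R}\!\int_{B_{R/4}}|\nabla u|^2\le\delta$, which rescales to the crucial decay $M(r)\le 64\delta$ for every $r\in(0,1/4]$.

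Finally, the pressure half of \eqref{H-Assump} produces $\tfrac{1}{r^3}\!\int_{B_r}|p|\le M(r)\le 64\delta$ at every small scale. Since $\|p-[p]_r\|_{L^1(B_r)}\le 2\|p\|_{L^1(B_r)}$, Proposition~\ref{prop:PE}\ref{item:PE1} then upgrades this $L^1$ control into a scale-invariant $L^{5/3}$-bound on $p-[p]_r$ (and $L^{5/4}$-bound on $\nabla p$) proportional to $\delta$, using $\|u\|_{\mathcal{W}^{1,2}(B_r)}\|\nabla u\|_{L^2(B_r)}\lesssim r\,M(r)$ for $r\le 1$. Choosing $\delta$ strictly below the threshold of the $\ep$-regularity criterion (Lemmas~\ref{lem:Lin}, \ref{lem:Linbbb}) therefore gives $u\in L^\infty$ on a ball around the origin, so $x=0$ is a regular point.

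The main obstacle I anticipate is the cascade of parameter choices: the final $\ep$ depends on $m$ through $\ep\le\min\{\ep(m),\ep_0(C_0(m),\delta_*)\}$, where $\delta_*$ is the $\ep$-regularity threshold; recall that $C_0(m)$ grows polynomially in $m$ (see \eqref{eq:Cmm}), while $\ep_0$ shrinks as $C_0$ grows. The precise design of the hypothesis~\eqref{H-Assump}, in particular the prefactor $\ep$ on the pressure term, is what guarantees that at every stage $\tfrac{1}{r^3}\!\int_{B_r}|p|$ remains tied to $M(r)$, so that the pressure does not escape control as the scale decreases and as we chain the three smallness steps together.
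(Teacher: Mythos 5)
Your main line of attack is viable, and it is in fact the shortcut the paper explicitly mentions and then deliberately bypasses: after establishing $M_\ast=\sup_{0<R\le1}M(R)\le 2^5C_0(m)$ via Lemma~\ref{lemma:MBound}, the paper remarks that one \emph{could} apply Proposition~\ref{prop:fM} with $C_0=2^5C_0(m)$ and a small $\ep_0$, ``however we can avoid this by using the monotonicity formula.'' The paper's actual route is different from yours from that point on: it bounds $A(R)$ from Proposition~\ref{prop:dAdr}, invokes Proposition~\ref{prop:cubicEst} with $\delta_1=m/32$, $\delta_2=m/(32M_\ast^{3/2})$ to make the integrand in the monotonicity identity nonnegative, uses existence of $\lim_{R\to0}A(R)$ plus Fatou's lemma along a blow-up sequence to force the blow-up limit to vanish, and derives a contradiction with $\liminf_{R\to0}M(R)\ge\delta$; the $\ep$-regularity lemmas (Lemmas~\ref{lem:Lin}, \ref{lem:Linbbb}) enter only in the regime where $\liminf M$ is already small. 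Your route (Lemma~\ref{lemma:MBound} $\Rightarrow$ Proposition~\ref{prop:fM} at every scale $\Rightarrow$ $M(r)\le64\delta$ for all small $r$ $\Rightarrow$ pressure smallness from the $\ep R^{-3}\int_{B_R}|p|$ term in \eqref{H-Assump} together with Proposition~\ref{prop:PE} $\Rightarrow$ Lemmas~\ref{lem:Lin}, \ref{lem:Linbbb}) avoids the blow-up at the last step at the cost of re-running the compactness argument of Proposition~\ref{prop:fM} with the large constant $C_0(m)$, which is exactly what forces $\ep_0$ (and hence $\ep$) to be very small. Two bookkeeping points for a full write-up: Lemma~\ref{lemma:MBound}'s proof controls only the dyadic scales $4^{-k}R_0$, and the paper's bound over all radii is $2^5C_0(m)$ (see \eqref{Mast}, \eqref{eq:Cmm}), so your choice $C_0=2C_0(m)$ should be enlarged accordingly; and Lemmas~\ref{lem:Lin}, \ref{lem:Linbbb} require smallness of $\int|p|^{3/2}$ itself, not only of $p-[p]_r$, so you must also note that the mean $[p]_r$ is controlled (it is, by the same $L^1$ pressure hypothesis $\tfrac{1}{r^3}\int_{B_r}|p|\le M(r)$).

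There is, however, one genuine misstep: your treatment of the case $m\le 8C_E$. The claim that ``a subsequence realizing $m$ already puts $M$ below the $\ep$-regularity threshold'' is false, since $8C_E\ge 8$ is a fixed dimensional constant and $M(R_k)\approx m$ need not be small; moreover Lemma~\ref{lemma:MBound} is unavailable in this regime (its hypothesis is $m\in(8C_E,\infty)$), so you cannot fall back on the global bound either. The repair is the one the paper uses: Proposition~\ref{prop:fM} only requires its hypothesis ``for some $R>0$,'' so you apply it at a single good scale $R_k$ with, say, $C_0=2m+1$ (using $M(R_k)\le 2m$ and the closeness hypothesis with $\ep$ small relative to the resulting $\ep_0$), obtaining $M(R_k/4)\le 64\delta$, and then run your pressure and $\ep$-regularity step at that one scale, which suffices for regularity at the origin. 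With that correction, and the parameter adjustments above, your proposal is complete.
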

\begin{proof}
If $m\le 8C_E$ then we can apply Proposition \ref{prop:fM}, and 
hence the result follows.
	Now suppose $m\in(8C_E, \infty)$. In light of Lemma \ref{lemma:MBound}, there exists $\ep_1>0$ such that if $(u,P)$ satisfies \eqref{H-Assump} with $\ep\in (0,\ep_1)$ then $M(R)$ is uniformly bounded in $R\in(0,1]$ and we set 
	\begin{equation}\label{Mast}
		M_{\ast} \vcentcolon= \sup\limits_{0<R\le 1} M(R) \le 2^5 C_0(m), 
	\end{equation}
	where $C_0(m)$ is given by \eqref{eq:Cmm}.
	At this point we can apply Proposition \ref{prop:fM}
	with $C_0=2^5C_0(m)$ by choosing $\ep_0$ sufficiently small, however we can avoid this by using the monotonicity formula.

	For $0<R_1<R_2$, we have by Proposition \ref{prop:dAdr} that
	\begin{equation}\label{monot-eq}
		A(R_2) - A(R_1) = \int_{R_1}^{R_2} \frac{1}{r} \Big\{ D(r) + \frac{2}{r^2}\int_{B_r} \Big(\frac{|u|^2}{2} + P\Big) u \cdot \frac{x}{|x|} \Big\}\, d r
	\end{equation}
	By \eqref{Mast}, if $(u,P)$ satisfies \eqref{H-Assump} for some $\ep\le \ep_1$ then for all $R\in(0,1]$
	\begin{align*}
		|A(R)| =& \left|  \int_{B_R} \frac{1}{R^3} u\cdot \left( x\cdot \nabla  \right) u + \frac{9}{4r^3}|u|^2 - \frac{1}{R^2}\Big( \frac{|u|^2}{2} + P \Big) u \cdot \frac{x}{|x|} \right|\\
		\le & \frac{1}{R^2} \left(\int_{B_R} |u|^2 \right)^{1/2} \left(\int_{B_R} |\nabla u|^2 \right)^{1/2} + \frac{9}{4R^3} \int_{B_R} |u|^2 + \delta_1 + \delta_2 M_{\ast}^{3/2}\\
		\le & C M(R) + 1 + M_{\ast}^{3/2}  \le C M_{\ast} + 1 + M_{\ast}^{3/2} <\infty.   
	\end{align*}
	Thus we have the bound that
	\begin{equation*}
		\sup\limits_{0<R\le 1} |A(R)| < \infty
	\end{equation*}
	Taking the limit $R_1\to 0^+$, we get
	\begin{align}
		\lim\limits_{R_1\to 0^+}\left| \int_{R_1}^{R_2} \frac{1}{r} \Big\{ D(r) + \frac{2}{r^2}\int_{B_r} \Big(\frac{|u|^2}{2} + P\Big) u \cdot \frac{x}{|x|} \Big\}\, d r \right| \le A(R_0) + \sup\limits_{0<R\le 1} |A(R)| < \infty.
	\end{align}

	If $\liminf_{R\to 0}M(R)<\delta$, and $\delta$ is small  then by Lemmas \ref{lem:Lin} ans \ref{lem:Linbbb}, 
	$x=0$ is a regular point. Thus without loss of generality, we assume the case
	\begin{equation*}
		m\vcentcolon=\liminf_{R\to 0}M(R)\ge\delta>0.	
		\end{equation*}
	Then for small enough $R_2>0$, we have 
	\begin{equation}\label{temp:reg1}
		\inf\limits_{R\in(0,R_2]} M(R) \ge \frac{m}{2}.
	\end{equation}
	With this, we set the constants
	\begin{equation*}
		\delta_1\vcentcolon= \frac{1}{32} m, \qquad \delta_2 \vcentcolon= \frac{m}{32M_{\ast}^{3/2}}. 
	\end{equation*}

	By Proposition \ref{prop:cubicEst}, there exists $\ep_2>0$ such that if $(u,P)$ satisfies \eqref{H-Assump} with $\ep\le \min\{\ep_1,\ep_2\}$ then 
	 \begin{equation}\label{temp:reg2}
	 	\left| \frac{1}{R^2} \int_{B_R} \Big(\frac{|u|^2}{2} + P\Big) u\cdot \frac{x}{|x|}  \right| \le \frac{1}{32} m + \frac{m}{32M_{\ast}^{3/2}}\left\{M(R)\right\}^{3/2} \le \frac{m}{16}.
	 \end{equation}
	 Recall $D(r)$ defined in Proposition \ref{prop:dAdr}. By \eqref{temp:reg1}--\eqref{temp:reg2}, it holds that for all $R\in (0,R_2]$,
	 \begin{align*}
	 	& \left\{D(R) + \frac{2}{R^2}\int_{B_R} \Big(\frac{|u|^2}{2} + P\Big) u\cdot \frac{x}{|x|} \right\}\\
	 	= & \left\{  \int_{B_R} \Big\{ \frac{7}{2R^3}|u|^2 + \frac{3}{4 R^3} \left|\nabla\left(|x|u\right)\right|^2 + \frac{3(R^2-|x|^2)}{4R^3} |\nabla u|^2 \Big\} \right\}\\ 
	 	&+ \left\{ \frac{1}{4} M(R) + \frac{2}{R^2}\int_{B_R} \Big(\frac{|u|^2}{2} + P\Big) u\cdot \frac{x}{|x|} \right\} \\
	 	\ge & \left\{ \int_{B_R} \Big\{ \frac{7}{2R^3}|u|^2 + \frac{3}{4 R^3} \left|\nabla\left(|x|u\right)\right|^2 + \frac{3(R^2-|x|^2)}{4R^3} |\nabla u|^2 \Big\} \right\} +\frac{m}{8}  - \frac{m}{8} \\
	 	= & \left\{  \int_{B_R} \Big( \frac{7}{2R^3}|u|^2 + \frac{3}{4 R^3} \left|\nabla\left(|x|u\right)\right|^2 + \frac{3(R^2-|x|^2)}{4R^3} |\nabla u|^2 \Big) \right\} \ge 0.
	 \end{align*} 
	 
	 For a pair of numbers $0<s<S<1$, and a sequence of positive numbers $R_k\to 0$,  we have
	 from the scale invariance of $A$ 
	 \begin{align*}
	 A[u](SR_k)-A[u](sR_k)
	 &=
	 A[u^k](S)-A[u^k](s)\\
	& \ge 
	 \int_s^S  \int_{B_R} \Big( \frac{7}{2R^3}|u^k|^2 + \frac{3}{4 R^3} \left|\nabla\left(|x|u^k\right)\right|^2 + \frac{3(R^2-|x|^2)}{4R^3} |\nabla u^k|^2 \Big). 
	 \end{align*}
	 Since $A[u](R)$ is monotone and bounded, then $\lim_{R\to 0^+}A[u](R)$ exists.
	 Consequently, for fixed $s, S$ we have 
	 \[
	 \lim_{k\to \infty}(A[u](SR_k)-A[u](sR_k))=0.
	 \]
	 This and Fatou's lemma yield
	 \[
	 \int_s^S  \int_{B_R} \Big( \frac{7}{2R^3}|\bar u|^2 + \frac{3}{4 R^3} \left|\nabla\left(|x|\bar u\right)\right|^2 + \frac{3(R^2-|x|^2)}{4R^3} |\nabla \bar u|^2 \Big) =0, 
	 \]
	 where $\bar u$ is the limit in $\mathcal W^{1,2}(B_2)$, say,  of $u^k(x)=R_ku(R_kx)$, for some subsequence of $\{R_k\}$.
	 Hence, we infer that $\bar u\equiv 0$. It remains to show that this is in contradiction with \eqref{temp:reg1}.
	 
	 Indeed, \eqref{temp:reg1} implies that there is a sequence $R_k$ such that 
	 $\lim_{R_k\to 0} M(R_k)\ge \frac {M_\infty} 2$. Hence, for sufficiently large $k$ one has 
	 \[
	 \int_{B_{R_k}}\frac{|u|^2}{R^3_k}+\frac{|\nabla u|^2}{R_k}\ge \frac{M_\infty}3.
	 \]
	 Again,  we consider two scenarios: a) $\int_{B_{R_k}}\frac{|u|^2}{R^3_k}\ge \frac{M_\infty}6$
	 or b) $ \int_{B_{R_k}}\frac{|\nabla u|^2}{R_k}\ge \frac{M_\infty}6$. 
	 
	 For a) we have $\int_{B_1}|u^k|^2=\int_{B_{R_k}}\frac{|u|^2}{R^3_k}\ge \frac{M_\infty}6$.
	 This is a contradiction in view of the 
	  strong convergence $u^k\to 0$ in $L^2(B_1)$. As for b) we can use the 
	  weak energy inequality to finish the proof. 
\end{proof}

\appendix

%
%

\section{Lin's perturbation method}\label{sec:appendix}
Let $(v,q)(x,t)\to \R^{3}\times \R \to \R^3\times \R$ be a suitable weak solution to the time evolving equations with spatial dimension $N=3$
\begin{equation}
	\left\{ \begin{aligned}
		&v_t + (v\cdot \nabla ) v + \nabla p = \Delta v,\\
		&\div v = 0,
	\end{aligned} \right.\qquad \text{ for }  \ (x,t)\in \R^{3}\times \R.
\end{equation}
 In \cite{Lin}, it is shown that there exists universal constants $C_0>0$ and $\ep_0>0$ such that if 
\begin{equation*}
	\int_{-1}^0\! \int_{B} \left\{ |v|^3 + |q|^{3/2} \right\} \, d x d t \le \ep_0, 
\end{equation*} 
then $(x,t)=(0,0)$ is a regular point and for all $K\in(0,1)$,
\begin{equation*}
	\|v\|_{\mC^{\alpha}(Q_K)} \le C_0 \quad \text{for some } \ \alpha >0, 
\end{equation*}
where $Q_K\vcentcolon= \{ (x,t) \, \vert \, |x|\le K \ \text{ and } \ -K^2 \le t \le 0 \}$.

Using a compactness argument, similar to the one in the proof of Proposition \ref{prop:fM}, 
it is easy to check that if $M(\rho)$ is small then  so is

\[
\int_{B_\rho/2} |u|^3+|p|^{\frac32}.
\]

\begin{lemma}\label{lem:Lin}
If $u$ is a suitable weak solution and 
\[
\int_{B_1} |u|^3+|p|^{\frac32}<\ep^*
\]
for some sufficiently small $\ep^*$, 
then 
\begin{equation}\label{eq:Lin1}
\frac1{\theta^5}\int_{B_\theta}\frac{|u-[u]_\theta|^3}{\theta^{\alpha_0}}+\frac{|p-[p]_\theta|^\frac32}{\theta^{\alpha_0}}\le \frac12 \int_{B_1} |u|^3+|p|^{\frac32},
\end{equation}
for some positive $\theta$ and $\alpha_0\in (0, \frac12)$.
\end{lemma}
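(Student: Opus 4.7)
The plan is to prove this by a standard compactness-and-contradiction scheme in the spirit of Lin's $\epsilon$-regularity method. Fix $\alpha_0\in(0,1/2)$ and suppose, for contradiction, that no $\theta$ as in the lemma exists. Then one can extract a sequence of suitable weak solutions $(u_k,p_k)$ with $\Phi_k:=\int_{B_1}(|u_k|^3+|p_k|^{3/2})\to 0$ for which
\begin{equation*}
\frac{1}{\theta^{5+\alpha_0}}\int_{B_\theta}\bigl(|u_k-[u_k]_\theta|^3+|p_k-[p_k]_\theta|^{3/2}\bigr) > \tfrac{1}{2}\Phi_k \qquad (\ast)
\end{equation*}
for every prospective $\theta>0$. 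After shifting each $p_k$ by an additive constant so that $[p_k]_{B_1}=0$, I rescale: set $v_k:=\Phi_k^{-1/3}u_k$ and $q_k:=\Phi_k^{-2/3}p_k$, so that $\int_{B_1}(|v_k|^3+|q_k|^{3/2})=1$ and $(v_k,q_k)$ solves the near-linear system
\begin{equation*}
\Phi_k^{1/3}(v_k\cdot\nabla)v_k+\Phi_k^{1/3}\nabla q_k=\Delta v_k, \qquad \nabla\cdot v_k=0.
\end{equation*}
In these variables $(\ast)$ reads $\theta^{-(5+\alpha_0)}\int_{B_\theta}(|v_k-[v_k]_\theta|^3+|q_k-[q_k]_\theta|^{3/2})>1/2$.

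The next step is to obtain uniform a priori bounds and extract a Stokes limit. Applying the local energy inequality on $B_{1/2}\subset B_1$ and controlling the cubic and pressure$\times$velocity terms via H\"older's inequality yields $\int_{B_{1/2}}|\nabla u_k|^2\le C(\Phi_k^{2/3}+\Phi_k)\le C\Phi_k^{2/3}$ for $k$ large, hence $\|v_k\|_{\mathcal W^{1,2}(B_{1/2})}\le C$ uniformly. The pressure estimate of Proposition \ref{prop:PE} simultaneously gives a uniform bound $\|q_k\|_{\mathcal W^{1,5/4}(B_{1/2})}\le C$. By Rellich--Kondrachov compactness followed by interpolation with Sobolev embedding, one can pass to a subsequence with $v_k\to v$ strongly in $L^3(B_{1/2})$ and $q_k\to q$ strongly in $L^{3/2}(B_{1/2})$. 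Passing to the limit in the weak formulation, using that $\Phi_k^{1/3}\to 0$ kills the convective nonlinearity while the pressure term balances the Laplacian in the natural way, one concludes that $(v,q)$ solves the Stokes system
\begin{equation*}
-\Delta v+\nabla q=0, \qquad \nabla\cdot v=0 \qquad \text{on } B_{1/2}.
\end{equation*}

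Interior regularity for the Stokes system then ensures $v,q\in C^\infty$ on a slightly smaller ball with bounds depending only on $\|v\|_{L^3}+\|q\|_{L^{3/2}}\le 1$. Pointwise Taylor expansion (equivalently the classical Campanato lemma for Stokes solutions) gives, for all sufficiently small $\theta$,
\begin{equation*}
\frac{1}{\theta^{5+\alpha_0}}\int_{B_\theta}\bigl(|v-[v]_\theta|^3+|q-[q]_\theta|^{3/2}\bigr)\le C\theta^{3-\alpha_0}+C\theta^{3/2-\alpha_0},
\end{equation*}
which tends to $0$ as $\theta\to 0^+$ since $\alpha_0<1/2$. Fix $\theta$ small enough that the right-hand side is less than $1/4$. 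By the strong $L^3\times L^{3/2}$ convergences, the same inequality with upper bound $1/4$ transfers to $(v_k,q_k)$ for $k$ large, contradicting $(\ast)$. The main technical obstacle is the Stokes limit step: one must extract the uniform $\mathcal W^{1,2}_{loc}$-bound on $v_k$ from the local energy inequality (where the smallness of $\Phi_k$ is essential to dominate the cubic term) and identify the limit $q$ as precisely the Stokes pressure of $v$, which requires care with the pressure normalization and with the fact that the Poisson equation $-\Delta q_k=\partial_i\partial_j(v_k^i v_k^j)$ passes to the limit cleanly because $v$ is smooth.
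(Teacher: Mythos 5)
Your proposal follows the same Lin-style compactness-and-contradiction scheme as the paper (rescale so the total $L^3\times L^{3/2}$ mass is normalized and the nonlinearity vanishes, extract a regular linear limit, transfer a Campanato decay back to the approximating sequence), so the overall route matches. One inconsistency is worth flagging, though: with your scaling $v_k=\Phi_k^{-1/3}u_k$, $q_k=\Phi_k^{-2/3}p_k$ the momentum equation becomes $\Phi_k^{1/3}(v_k\cdot\nabla)v_k+\Phi_k^{1/3}\nabla q_k=\Delta v_k$, so the factor $\Phi_k^{1/3}$ multiplies \emph{both} the convection term \emph{and} the pressure gradient. In the limit you therefore get $\Delta v=0$ (with $\div v=0$), not the Stokes system $-\Delta v+\nabla q=0$; the claim that ``the pressure term balances the Laplacian'' contradicts the equation you wrote two lines above. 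This is not a fatal flaw, because the regularity you need for the Campanato decay is obtained all the same: $v$ harmonic is $C^\infty$ in the interior, and $q$ then solves the Poisson equation $-\Delta q=\partial_i\partial_j(v^iv^j)$ with a smooth right-hand side, hence is also $C^\infty$ — so the decay $\theta^{3-\alpha_0}+\theta^{3/2-\alpha_0}$ still follows, though from the Laplace/Poisson pair rather than from Stokes regularity as stated. For comparison, the paper's (intended) normalization is the Lin-type scaling $\bar u_i=u_i/\epsilon_i$, $\bar p_i=p_i/\epsilon_i$ with $\epsilon_i^3=\int_{B_1}(|u_i|^3+|p_i|^{3/2})$, which keeps the pressure gradient at order one and gives a genuine Stokes limit — but since $\int|\bar p_i|^{3/2}=O(\epsilon_i^{3/2})\to 0$ the limit pressure is zero there too, so the limit velocity is again harmonic; the two routes differ only cosmetically. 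One more small point: the rest of your argument correctly secures the uniform $\mathcal W^{1,2}_{loc}$ bound from the local energy inequality and the uniform $\mathcal W^{1,5/4}_{loc}$ pressure bound from Proposition~\ref{prop:PE}, and the strong $L^3\times L^{3/2}$ convergences are adequate to transfer the decay and reach the contradiction — that part is sound and matches the paper.
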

\begin{proof}
If \eqref{eq:Lin1} fails, then there would be a sequence of solutions 
$(u_i, p_i)$ such that $\int_{B_1}|u_i|^3+|p_i|^{\frac32}:=\epsilon_i\to 0$ but 
\eqref{eq:Lin1} is not valid. Introduce 
\[
\bar u_i=\frac{u_i}{\epsilon_i}, \quad \bar p_i=\frac{p_i}{\epsilon_i}, 
\]
then 
\begin{equation}\label{eq:Lin4}
\epsilon_i \bar u_i\cdot \nabla \bar u_i+\nabla \bar p_i=\Delta \bar u_i, 
\quad 
\frac1{\theta^5}\int_{B_\theta}\frac{|\bar u-[\bar u]_\theta|^3}{\theta^{\alpha_0}}
+
\frac{|\bar p-[\bar p]_\theta|^\frac32}{\theta^{\alpha_0}}> \frac12, 
\quad  \int_{B_1} |\bar u_i|^3+|\bar p_i|^{\frac32}\le 2. 
\end{equation}
From the local energy inequality $u\in \mathcal W^{1, 2}_{loc}(B_1).$
Moreover, the following equation is satisfied in distributional sense
\begin{equation}
\Delta \bar p_i=-\epsilon_i \frac{\partial^2 (\bar u^k\bar u^l)}{\partial x_l\partial x_k}, 
\quad \mbox {in}\ B_1.
\end{equation}
From the Poisson representation theorem we can write $\bar p_i=h_i+g_i$, 
where $h_i$ is harmonic in $B_1$, and 
\begin{equation}
\left\{
\begin{array}{lll}
\Delta g_i=-\epsilon_i \frac{\partial^2 (\bar u^k\bar u^l)}{\partial x_l\partial x_k}
\quad \mbox {in}\ B_{\frac23},\\ 
g_i=0 \quad \mbox {on}\ \partial B_{\frac23}.
\end{array}
\right.
\end{equation}
From the Calder\'on-Zygmund estimates 
$g_i$  is uniformly bounded in ${L^{5/3}(B_{2/3})}$.
Consequently, $h_i\in L^{3/2}(B_{2/3})$ uniformly, hence 
from the local estimates for the harmonic functions 
\begin{align}
\int_{B_\theta}|\bar p_i-[\bar p_i]_\theta|^{\frac32}\le 
\int_{B_\theta}| h_i-[h_i]_\theta|^{\frac32}
\int_{B_\theta}|g_i-[g_i]_\theta|^{\frac32}\\
\le C_0 \theta^5\theta^{3/2}+C_0\epsilon_i\int_{B_{2/3}}|\bar u_i|^3.
\end{align}
For a suitable subsequence $\bar u_i\to \bar u$ in $\mathcal W^{1, 2}(B_{2/3})$
and $\bar p_i\to \bar p$ strongly in ${L^{3/2}(B_{2/3})}$.
Consequently, for sufficiently large $i$, we have 
\begin{equation}\label{eq:Lin2}
\int_{B_\theta}|\bar p_i-[\bar p_i]_\theta|^{\frac32} \le C_0 \theta^5\theta^{3/2}
\end{equation}
Since the limit $\bar u$ solves the Stokes system, then it follows that 
$\bar u$ is H\"older continuous with, say, exponent $2\alpha_0$, and therefore 
$\int_{B_\theta}|\bar u-[\bar u]_\theta|^{\frac32}\le \frac14  \theta^5\theta^{\alpha_0}$.
From the strong convergence $\bar u_i\to \bar u$ in $L^3(B_{2/3})$, we infer that 
\begin{equation}\label{eq:Lin3}
\int_{B_\theta}|\bar u-[\bar u]_\theta|^{\frac32}\le \frac13  \theta^5\theta^{\alpha_0}.
\end{equation}
Combining \eqref{eq:Lin2} and \eqref{eq:Lin2} we get a contradiction with the 
second inequality in \eqref{eq:Lin4}. 
\end{proof}

\begin{lemma}\label{lem:Linbbb}
If 
\[
\int_{B_1} |u|^3+|p|^{\frac32}<\ep^*
\]
for some sufficiently small $\ep^*$, 
then $u$ is H\"older continuous in $B_{1/2}$.
\end{lemma}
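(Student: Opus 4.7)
The plan is to iterate the one–step decay from Lemma \ref{lem:Lin} via the Navier–Stokes scaling and conclude by Campanato's characterisation of H\"older spaces. Fix the constants $\theta\in(0,1)$ and $\alpha_0\in(0,\tfrac12)$ produced by Lemma \ref{lem:Lin} and let $\ep^\ast$ be the corresponding smallness threshold. For $x_0\in B_{1/2}$ and $r\in(0,1/2]$ write the rescaled suitable weak solution
\[
u_{x_0,r}(y):=r\,u(x_0+ry),\qquad p_{x_0,r}(y):=r^2 p(x_0+ry),\qquad y\in B_1,
\]
and observe that applying Lemma \ref{lem:Lin} to $(u_{x_0,r},p_{x_0,r})$ and undoing the scaling yields, whenever $r^{-2}\!\int_{B_r(x_0)}(|u|^3+|p|^{3/2})<\ep^\ast$,
\[
\int_{B_{\theta r}(x_0)}\!\!\big(|u-[u]_{\theta r,x_0}|^3+|p-[p]_{\theta r,x_0}|^{3/2}\big)\le \tfrac12\,\theta^{5+\alpha_0}\,r^{-2}\!\int_{B_r(x_0)}(|u|^3+|p|^{3/2}).
\]

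The plan is then to iterate this decay at the geometric sequence of radii $r_k:=\theta^k R_0$, $R_0:=1/2$. For every $x_0\in B_{1/2}$ one has $B_{R_0}(x_0)\subset B_1$, so (after shrinking the global $\ep^\ast$ in the hypothesis by the universal factor $R_0^{-2}$) the initial smallness at $r=R_0$ holds. The delicate point is to propagate the smallness hypothesis from scale $r_k$ to scale $r_{k+1}$, because the right–hand side of Lemma \ref{lem:Lin} involves the full $L^3+L^{3/2}$ norm rather than the mean–subtracted excess. I would do this by splitting
\[
\int_{B_{r_{k+1}}(x_0)}|u|^3\le C\!\int_{B_{r_{k+1}}(x_0)}|u-[u]_{r_{k+1},x_0}|^3+C\,r_{k+1}^{5}\,|[u]_{r_{k+1},x_0}|^3
\]
and similarly for $p$. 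The first term is controlled by the previous step's decay, while the second is controlled by a telescoping bound on the means,
\[
\big|[u]_{r_{k+1},x_0}-[u]_{r_k,x_0}\big|^3\le\frac{C}{r_{k+1}^{5}}\!\int_{B_{r_k}(x_0)}|u-[u]_{r_k,x_0}|^3,
\]
which, combined with the geometric decay of the excess, gives a bound $|[u]_{r_k,x_0}|\le|[u]_{R_0,x_0}|+C\sum_{j<k}\theta^{j\alpha_0/3}\!\le\!C_\ast$ uniform in $k$, so that $r_{k+1}^3|[u]_{r_{k+1},x_0}|^3\to 0$ as $k\to\infty$. Choosing $\ep^\ast$ small enough with respect to the geometric factors makes the iteration close; inductively,
\[
\int_{B_{r_k}(x_0)}\big(|u-[u]_{r_k,x_0}|^3+|p-[p]_{r_k,x_0}|^{3/2}\big)\le C\,r_k^{5+\alpha_0}\qquad\text{for every }k\in\N,
\]
uniformly in $x_0\in B_{1/2}$.

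Finally, interpolating between dyadic scales (standard) gives $\int_{B_r(x_0)}|u-[u]_{r,x_0}|^3\le C\,r^{5+\alpha_0}$ uniformly for $x_0\in B_{1/2}$ and $r\in(0,R_0]$. By the Campanato (Morrey–Campanato) embedding this yields $u\in \mathcal C^{\alpha_0/3}(B_{1/2})$, which is the asserted H\"older regularity.

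The main obstacle, as foreshadowed, is the propagation of smallness through the iteration: Lemma \ref{lem:Lin} is not stated in a fully scale–invariant form (its right–hand side involves $\int|u|^3+|p|^{3/2}$ rather than the excess), reflecting the fact that the shifted field $u-[u]_r$ satisfies an Oseen–type equation rather than stationary Navier–Stokes. The telescoping control of the means is what allows one to nevertheless close the induction; alternatively, one could first upgrade Lemma \ref{lem:Lin} to a genuinely scale–invariant excess–decay statement (by re-running the compactness argument on the Oseen system with a uniformly bounded drift) and then iterate it directly.
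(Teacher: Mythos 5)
Your overall strategy (one--step decay plus Campanato) is the right shape, and your closing sentence actually names the correct repair, but the main argument you give --- iterating Lemma \ref{lem:Lin} on the \emph{unshifted} solution at every scale and controlling the means by telescoping --- has a genuine gap and does not close. Lemma \ref{lem:Lin} bounds the excess at scale $\theta r$ by the \emph{full} quantity $\int_{B_r(x_0)}\bigl(|u|^3+|p|^{3/2}\bigr)$, not by the excess at scale $r$. After the mean/excess splitting this full quantity is at least of order $r^5\,\bigl|[u]_{r,x_0}\bigr|^3$, and $[u]_{r_k,x_0}$ tends to the Lebesgue value $u(x_0)$, which is not zero in general. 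Hence one application of the lemma at scale $r_k$ can only give
\begin{equation*}
\int_{B_{r_{k+1}}(x_0)}\bigl|u-[u]_{r_{k+1},x_0}\bigr|^3\;\lesssim\;\theta^{5+\alpha_0}\,r_k^{5}\,\bigl|[u]_{r_k,x_0}\bigr|^3\;\sim\;\theta^{\alpha_0}\,r_{k+1}^{5}\,|u(x_0)|^3,
\end{equation*}
i.e.\ decay of order $r^5$ with a fixed prefactor, and further iterations do not improve the exponent because each step returns to the full norm. Your claimed inductive bound $\int_{B_{r_k}}|u-[u]_{r_k}|^3\le C\,r_k^{5+\alpha_0}$ would force $|[u]_{r_k,x_0}|\lesssim r_k^{\alpha_0/3}$, i.e.\ $u(x_0)=0$ at every point. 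The telescoping estimate for the means is also circular: the bound $\bigl|[u]_{r_{j+1}}-[u]_{r_j}\bigr|\lesssim \theta^{j\alpha_0/3}$ presupposes exactly the $r^{5+\alpha_0}$ excess decay being proved; with only the $r^5$ decay actually available the increments are of unit size and their sum diverges. Campanato with exponent $5$ gives at best a BMO-type conclusion, not H\"older continuity, so the scheme as written fails. (A minor separate slip: when you undo the scaling the factor $r^{-2}$ cancels on both sides, so it should not survive on the right-hand side of your one-step inequality.)

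The paper's proof is precisely your parenthetical ``alternative'': it iterates on the normalized excess rather than on $u$ itself. One sets $u_1(x)=\theta^{-\alpha_0}\bigl(u(\theta x)-[u]_\theta\bigr)$ and $p_1(x)=\theta^{1-\alpha_0/3}\bigl(p(\theta x)-[p]_\theta\bigr)$, notes that $(u_1,p_1)$ solves a Navier--Stokes-type equation with the drift $\theta\bigl([u]_\theta+\theta^{\alpha_0/3}u_1\bigr)$, and re-runs the compactness argument of Lemma \ref{lem:Lin} for this family; the blow-up limit is a Stokes system with a bounded constant drift $U_0$, whose regularity theory yields the same one-step decay for $(u_1,p_1)$. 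Iterating this genuinely scale-invariant excess-decay statement gives $\int_{B_R}|u-[u]_R|^3\lesssim \ep\,R^{5+\alpha_0}$ and hence H\"older continuity. To make your proof correct you should promote that alternative from a remark to the actual argument: prove the drifted version of Lemma \ref{lem:Lin} (with a uniform bound on the constant drift, so that the smallness threshold and the constants $\theta,\alpha_0$ are uniform along the iteration) and then run the Campanato iteration on the shifted, $\theta^{\alpha_0}$-rescaled pairs.
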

\begin{proof}
For given $\theta$, as in  Lemma \ref{lem:Lin}, we let 
\begin{equation}
u_1(x)=\frac{u(\theta x)-[u]_\theta}{\theta^{\alpha_0}}, 
\quad 
p_1(x)=\theta^{1-\alpha_0/3}(p(\theta x)-[p]_\theta), 
\end{equation}
and, moreover, 
\begin{align}
\theta([u]_\theta+\theta^{\alpha_0/3}u_1)\cdot \nabla u_1+\nabla p_1=\Delta u_1 \quad \mbox{in}\ B_1. 
\end{align}
Applying Lemma \ref{lem:Lin}, we get 
\begin{align}\label{eq:Freed}
\int_{B_1}|u_1|^3+|p_1|^{3/2}\le \frac{\ep^*}2
\end{align}
Indeed, in the compactness argument that we employed in the proof, the only step that 
must be changed is the limiting equation, which in this case takes the form 
\begin{align}
\left\{
\begin{array}{lll}
U_0\cdot \nabla \bar u+\nabla \bar p=\Delta \bar u,\quad \mbox{in}\ B_1\\
\div \bar u=0, \quad \mbox{in}\ B_1
\end{array}
\right.
\end{align}
where $U_0=\lim_{i\to \infty} \theta[\bar u_i]_\theta$ is a constant vectorfield with $|U_0|\le 2$.
Applying the regularity theory for the Stokes system with a constant drift \cite{Galdi}, 
we again conclude that $\bar u$, the limit in the proof of this slightly modified version 
of Lemma \ref{lem:Lin} is regular as well.

 Summarizing, we obtain that \eqref{eq:Freed} implies 
 \begin{align}
 \frac1{\theta^5}\int_{B_\theta}\frac{|u_1-[u_1]_\theta|^3}{\theta^{\alpha_0}}
 +
 \frac{|p_1-[p_1]_\theta|^\frac32}{\theta^{\alpha_0}}\le \frac12 \int_{B_1} |u_1|^3+|p_1|^{\frac32}
 \leq \frac{\ep^*}4
 \end{align}
Iterating this this inequality yields, for small $R$, 
\[
R^5\int_{B_R}|u-[u]_R|^3\leq C_0\epsilon R^{\alpha_0}, 
\]
implying that $u$ is H\"older continuous in $x$.
\end{proof}

%
%
\section{Computation for homogeneous Euler's equations}\label{sec:app-Euler}
We give a quick computation that expresses the Euler equations 
in spherical coordinates for self-similar solutions, as in \eqref{eq:EulerSphere}. 
A more general computation for the Navier-Stokes system can be found in \cite{Sverak}. 
By a direct computation 
\begin{align*}
\nabla_{\R^n, x_j}V^i=-\frac{v^ix_j}{r^3}
+
\frac1{r}\nabla_{\R^n, j}v^i
-\frac{2f}{r^4}x^ix^j
+\frac1{r^2}x^i\nabla_{\R^n, j}f+\frac{f\delta_{ij}}{r^2}.
\end{align*}
The parts of the convective terms can be computed as follows 
\begin{align*}
v^j\nabla_{\R^n, j}V^i
&=
-\frac{v^jv^ix_j}{r^3}
+
\frac1{r}v_j\nabla_{\R^n, j}v^i
-\frac{2f}{r^4}v^jx^ix^j
+\frac1{r^2}x^iv^j\nabla_{\R^n, j}f+\frac{fv^i}{r^2}\\
&=
\frac1{r}v_j\nabla_{\R^n, j}v^i
+\frac1{r^2}x^iv^j\nabla_{\R^n, j}f
+\frac{fv^i}{r^2}\\
&=
\frac1{r^2}(v_j(\nabla_{\Sph^{n-1}}v^i)^j-|v|^2\sigma^i)
+\frac1{r^3}x^iv^j(\nabla_{\Sph^{n-1}}f)^j
+\frac{fv^i}{r^2}\\
&=
\frac1{r^2}(v_j(\nabla_{\Sph^{n-1}}v^i)^j-|v|^2\sigma^i)
+\frac1{r^2}\sigma^iv^j(\nabla_{\Sph^{n-1}}f)^j
+\frac{fv^i}{r^2}.
\end{align*}
On the other hand 
\begin{align*}
\sigma^jf\nabla_{\R^n, j}V^i
&=
-\frac{f v^i}{r^2}
+\frac{f}{r}(\sigma^j\nabla_{\R^n, j}v^i)
-\frac{2f^2 x^i}{r^3 }
+
\frac{f x^i}{r^2}(\sigma^j\nabla_{\R^n, j}f)
+\frac{f^2\sigma^i}{r^2}\\
&=-\frac{f v^i}{r^2}
+\frac{f}{r}(\sigma^j\nabla_{\R^n, j}v^i)
-\frac{f^2 x^i}{r^3 }\\
&=
-\frac{f v^i}{r^2}
-\frac{f^2 \sigma^i}{r^2 }, 
\end{align*}
where the last line follows from the observation 
$\sigma\cdot \nabla_{\Sph^{n-1}} v=0$. 
Combining, we obtain
\[
r (V\cdot \nabla_{\R^n})V= \frac1{r^2}(v_j(\nabla_{\Sph^{n-1}}v^i)^j-|v|^2\sigma^i)
+\frac1{r^2}\sigma^iv^j(\nabla_{\Sph^{n-1}}f)^j
-\frac{f^2 \sigma^i}{r^2 }.
\]

Hence for the tangential components 
\begin{equation}
(v\cdot \nabla_{\Sph^{n-1}} )v+\nabla_{\Sph^{n-1}} p=0, 
\end{equation}
 and for the normal component
 \begin{equation}
-|v|^2 +v\cdot \nabla_{\Sph^{n-1}} f-f^2-2p=0, 
\end{equation}

Introducing $H=|v|^2+f^2+2p$, we see that the equation for the normal component is 
\begin{equation}
v\cdot \nabla_{\Sph^{n-1}} f=H, 
\end{equation}
Finally note that 

\begin{align*}
v\cdot \nabla_{\Sph^{n-1}} H
&=
v\cdot (2v  \nabla_{\Sph^{n-1}} v+2f  \nabla_{\Sph^{n-1}} f+2 \nabla_{\Sph^{n-1}} p)\\
&=
v\cdot 2f  \nabla_{\Sph^{n-1}} f\\
&=
2fH.  
\end{align*}

\begin{bibdiv}
\begin{biblist}
\bib{Beckner}{article}{
   author={Beckner, William},
   title={Sharp Sobolev inequalities on the sphere and the Moser-Trudinger
   inequality},
   journal={Ann. of Math. (2)},
   volume={138},
   date={1993},
   number={1},
   pages={213--242},
   issn={0003-486X},
   review={\MR{1230930}},
   doi={10.2307/2946638},
}
\bib{CKN}{article}{
   author={Caffarelli, L.},
   author={Kohn, R.},
   author={Nirenberg, L.},
   title={Partial regularity of suitable weak solutions of the Navier-Stokes
   equations},
   journal={Comm. Pure Appl. Math.},
   volume={35},
   date={1982},
   number={6},
   pages={771--831},
   issn={0010-3640},
   review={\MR{673830}},
   doi={10.1002/cpa.3160350604},
}

\bib{Lewis}{article}{
   author={Choe, Hi Jun},
   author={Lewis, John L.},
   title={On the singular set in the Navier-Stokes equations},
   journal={J. Funct. Anal.},
   volume={175},
   date={2000},
   number={2},
   pages={348--369},
   issn={0022-1236},
   review={\MR{1780481}},
   doi={10.1006/jfan.2000.3582},
}

\bib{FR-arma}{article}{
   author={Frehse, Jens},
   author={R{u}\v{z}i\v{c}ka, Michael},
   title={Regularity for the stationary Navier-Stokes equations in bounded
   domains},
   journal={Arch. Rational Mech. Anal.},
   volume={128},
   date={1994},
   number={4},
   pages={361--380},
   issn={0003-9527},
   review={\MR{1308859}},
   doi={10.1007/BF00387714},
}

\bib{FR-Pisa}{article}{
   author={Frehse, Jens},
   author={R{u}\v{z}i\v{c}ka, Michael},
   title={On the regularity of the stationary Navier-Stokes equations},
   journal={Ann. Scuola Norm. Sup. Pisa Cl. Sci. (4)},
   volume={21},
   date={1994},
   number={1},
   pages={63--95},
   issn={0391-173X},
   review={\MR{1276763}},
}
\bib{Galdi}{book}{
	author    ={G.P.~Galdi},
	title     ={An Introduction to the Mathematical Theory of the Navier-Stokes Equations},
	subtitle  ={Steady-State Problems},
	series    ={Springer Monographs in Mathematics},
	edition   ={Second},
	publisher ={Springer New York, NY},
	date      ={2011},
}

\bib{Lin}{article}{
   author={Lin, Fanghua},
   title={A new proof of the Caffarelli-Kohn-Nirenberg theorem},
   journal={Comm. Pure Appl. Math.},
   volume={51},
   date={1998},
   number={3},
   pages={241--257},
   issn={0010-3640},
   review={\MR{1488514}},
}

\bib{Scheffer}{article}{
   author={Scheffer, Vladimir},
   title={The Navier-Stokes equations on a bounded domain},
   journal={Comm. Math. Phys.},
   volume={73},
   date={1980},
   number={1},
   pages={1--42},
   issn={0010-3616},
   review={\MR{573611}},
}

\bib{Seregin}{article}{
   author={Ser\"{e}gin, G. A.},
   title={On the local regularity of suitable weak solutions of the
   Navier-Stokes equations},
   language={Russian, with Russian summary},
   journal={Uspekhi Mat. Nauk},
   volume={62},
   date={2007},
   number={3(375)},
   pages={149--168},
   issn={0042-1316},
   translation={
      journal={Russian Math. Surveys},
      volume={62},
      date={2007},
      number={3},
      pages={595--614},
      issn={0036-0279},
   },
   review={\MR{2355422}},
   doi={10.1070/RM2007v062n03ABEH004415},
}

\bib{Seregin-book}{book}{
   author={Seregin, Gregory},
   title={Lecture notes on regularity theory for the Navier-Stokes
   equations},
   publisher={World Scientific Publishing Co. Pte. Ltd., Hackensack, NJ},
   date={2015},
   pages={x+258},
   isbn={978-981-4623-40-7},
   review={\MR{3289443}},
}
\bib{Struwe}{article}{
   author={Struwe, Michael},
   title={On partial regularity results for the Navier-Stokes equations},
   journal={Comm. Pure Appl. Math.},
   volume={41},
   date={1988},
   number={4},
   pages={437--458},
   issn={0010-3640},
   review={\MR{933230}},
   doi={10.1002/cpa.3160410404},
}
\bib{Struwe-per}{article}{
   author={Struwe, Michael},
   title={Regular solutions of the stationary Navier-Stokes equations on
   $\bold R^5$},
   journal={Math. Ann.},
   volume={302},
   date={1995},
   number={4},
   pages={719--741},
   issn={0025-5831},
   review={\MR{1343647}},
   doi={10.1007/BF01444514},
}
\bib{Sverak}{article}{
   author={\v{S}ver\'{a}k, V.},
   title={On Landau's solutions of the Navier-Stokes equations},
   note={Problems in mathematical analysis. No. 61},
   journal={J. Math. Sci. (N.Y.)},
   volume={179},
   date={2011},
   number={1},
   pages={208--228},
   issn={1072-3374},
   review={\MR{3014106}},
   doi={10.1007/s10958-011-0590-5},
}

\end{biblist}
\end{bibdiv}

\end{document}